\newcommand{\RevAdd}[1]{{\color{blue}#1}}
\newcommand{\RevDel}[1]{{\color{red}\sout{#1}}}
\crefname{hypothesis}{Hypothesis}{Hypotheses}
\crefname{assumption}{Assumption}{Assumption}
\title{Linearization-Based Feedback Stabilization of McKean-Vlasov PDEs\thanks{Submitted to the editors November 3rd, 2025.
\funding{D. Kalise is partially supported by the EPSRC Standard Grant EP/T024429/1. 
L. M. Moschen is funded by the Roth Scholarship at Imperial College London with travel funds from the ICL-CNRS Lab. 
G. A. Pavliotis is partially supported by an ERC-EPSRC Frontier Research Guarantee through Grant No. EP/X038645, ERC Advanced Grant No. 247031 and a Leverhulme Trust Senior Research Fellowship, SRF$\backslash$R1$\backslash$241055.
}}}
\author{Dante Kalise\thanks{Department of Mathematics, Imperial College London, London, UK (\email{d.kalise-balza@imperial.ac.uk}, \email{lucas.moschen22@imperial.ac.uk}, \email{g.pavliotis@imperial.ac.uk}).}
\and Lucas M. Moschen\footnotemark[2]
\and Grigorios A. Pavliotis\footnotemark[2]}
\newcommand*{\addFileDependency}[1]{
  \typeout{(#1)}
  \@addtofilelist{#1}
  \IfFileExists{#1}{}{\typeout{No file #1.}}
}
\newcommand{\R}{\mathbb{R}}
\newcommand{\C}{\mathbb{C}}
\newcommand{\T}{\mathbb{T}}
\newcommand{\A}{\mathcal{A}}
\newcommand{\Nop}{\mathcal{N}}
\newcommand{\W}{\mathcal{W}}
\newcommand{\B}{\mathcal{B}}
\newcommand{\M}{\Mcal}
\newcommand{\D}{\mathcal{D}}
\newcommand{\K}{\mathcal{K}}
\newcommand{\Hcal}{\mathcal{H}}
\newcommand{\Lcal}{\mathcal{L}}
\newcommand{\Pcal}{\mathcal{P}}
\newcommand{\Pac}{\mathcal{P}_{\mathrm{ac}}(\Omega)}
\newcommand{\Qcal}{\mathcal{Q}}
\newcommand{\Mcal}{\mathcal{M}}
\newcommand{\unitary}{\mathcal U}
\newcommand{\Space}{\mathcal{X}}
\DeclarePairedDelimiter\ceil{\lceil}{\rceil}
\DeclarePairedDelimiter\floor{\lfloor}{\rfloor}
\renewcommand{\RevAdd}[1]{#1}
\renewcommand{\RevDel}[1]{}
\begin{document}

\maketitle

\begin{abstract}
    We develop a feedback control framework for stabilizing the McKean-Vlasov PDE on the torus.
    Our goal is to steer the dynamics toward a prescribed stationary distribution or accelerate convergence to it using a time-dependent control potential. 
    We reformulate the controlled PDE in a weighted, zero-mean space and apply the ground-state transform to obtain a Schrödinger-type operator. 
    The resulting operator framework enables spectral analysis, verification of the infinite-dimensional Hautus test, and construction of a Riccati-based feedback law derived from the linearized dynamics, yielding local exponential stabilization with a chosen convergence rate.
    We rigorously prove local exponential stabilization via maximal regularity arguments and nonlinear estimates. 
    Numerical experiments on well-studied models in one and two dimensions (the noisy Kuramoto model for synchronization, the $O(2)$ spin model in a magnetic field, and the von Mises attractive interaction potential) showcase the effectiveness of our control strategy, demonstrating convergence acceleration and stabilization of unstable equilibria.
\end{abstract}

\begin{keywords}
  McKean-Vlasov equation, nonlocal Fokker-Planck, feedback stabilization, infinite-dimensional control, algebraic Riccati equation.
\end{keywords}

\begin{AMS}
   93C20, 93D15, 35Q84, 49N10, 35K55
\end{AMS}

\section{Introduction}

The {\em McKean-Vlasov equation} is a nonlinear and nonlocal Fok\-ker-Planck-type partial differential equation (PDE) describing the evolution of a probability \RevAdd{distribution with density} $\mu(t,x)$ that arises as the mean-field limit of a system of interacting particles subject to noise~\cite{mckean1969propagation, mckean1966class, sznitman1991topics}.
In this paper, we consider the overdamped McKean-Vlasov PDE on the torus $\Omega = \T^d \coloneqq (\R / 2\pi \mathbb{Z})^d$:
\begin{equation}
    \label{eq:mckean_vlasov}
    \partial_t \mu(t,x) = \nabla \cdot \Bigl[\sigma\nabla\mu(t,x) + \mu(t,x)\bigl(\nabla V(x) + (\nabla W * \mu)(x)\bigr)\Bigr], \quad x \in \Omega, \; t > 0
\end{equation}
with periodic boundary conditions, \RevAdd{i.e., $\mu(\cdot,t)$ is $2\pi$-periodic in each coordinate for every $t>0$}, where $*$ denotes the convolution $(f * g)(x) \coloneqq \int_\Omega f(x-x') \, g(x') \, dx'$, $V : \Omega \to \R$ is \RevAdd{a periodic} external potential, $W : \Omega \to \R$ is \RevAdd{a periodic} interaction potential, and $\sigma>0$ is the diffusion coefficient.
Equation~\eqref{eq:mckean_vlasov} appears in many applications to physics~\cite{bavaud1991equilibrium, frank2005nonlinear}, models for collective behavior~\cite{naldi2010mathematical}, and machine learning~\cite{sirignano2020mean}.
It governs the evolution of the probability \RevAdd{distribution} of \RevAdd{$X_t$, where $X_t$ solves} the McKean-Vlasov stochastic differential equation (SDE)
\begin{equation}\label{e:mckean_sde}
    dX_t = -\nabla V(X_t) \, dt - (\nabla W * \mu(t,\cdot))(X_t) \, dt + \sqrt{2\sigma} \, dB_t,
\end{equation}
where $\mu(t,\cdot)$ \RevAdd{denotes} the probability density of the \RevAdd{law of $X_t$, with initial law admitting density $\mu_0$}, and $\{B_t\}_{t\ge 0}$ is a standard $d-$dimensional Brownian motion\RevAdd{~\cite{mckean1966class, pavliotis2014stochastic}}.

We are interested in the stationary solutions $\bar\mu$ of the McKean-Vlasov PDE:
\[
\nabla \cdot \bigl[\sigma\nabla\bar\mu + \bar\mu\bigl(\nabla V+(\nabla W*\bar\mu)\bigr)\bigr]=0.
\]
As is well known, the stationary equation above is equivalent to the Kirkwood-Monroe integral equation~\cite{kirkwood1941statistical, tamura1984asymptotic}
\[
\bar\mu(x) = \frac{1}{Z}\exp\Bigl(-\frac{1}{\sigma}\bigl(V(x)+ (W*\bar\mu)(x)\bigr)\Bigr), \quad Z=\int_{\T^d} \exp \Bigl(-\frac{1}{\sigma}(V(y)+ (W*\bar\mu)(y))\Bigr)\,dy.
\]
Under uniform convexity of $V$ and convexity of $W$ (or $H$-stability on $\T^d$, see \Cref{sec:stationary_states} for the definition), one obtains a unique $\bar\mu$ and exponential convergence to the steady state~\cite{carrillo2003kinetic, malrieu2001logarithmic}. 
In the absence of such convexity, or when $W$ is not $H$-stable, multiple equilibria and bifurcations may occur at sufficiently high interaction strengths~\cite{bertoli2024stability, bertoli2025phase, carrillo2020long, cayes2010mckean, dawson1983critical, tamura1984asymptotic}. 
The goal of this paper is to study feedback stabilization for stationary states of the McKean-Vlasov dynamics.

\RevAdd{Research on controlling McKean-Vlasov dynamics has predominantly followed stochastic variational routes, where the control problem is formulated at the level of the McKean-Vlasov SDE. 
This literature can be divided into mean-field games and mean-field type control, corresponding respectively to non-cooperative Nash equilibria in large populations and to centralized formulations of controlled McKean-Vlasov dynamics; see, e.g.,~\cite{bensoussan2013mean,carmona2013control}.
In mean-field games, the backward equation describes the value of a representative agent, while the forward equation describes the induced population distribution, together with the fixed-point condition that the two coincide~\cite{carmona2018probabilistic,carmona2013control,huang2006large,lasry2007mean}.
In particular, the monograph~\cite{cardaliaguet2019master} establishes well-posedness of the master equation and studies the convergence of the $N$-player Nash system, as $N \to \infty$, to the mean-field game limit on $\T^d$.
In mean-field type control, by contrast, the population is controlled by a central planner, and the corresponding forward-backward systems and master equations encode optimality conditions for a centralized problem, with additional terms reflecting the dependence on the distribution~\cite{andersson2010maximum,bensoussan2013mean,pham2017dynamic}. 
We also refer to~\cite{djete2022mckean} for a discussion of strong, weak, and related formulations, and to~\cite{cardaliaguet2023algebraic} and the references therein for analyses of mean-field optimal control and $N$-particle approximations.
A recent controllability result for McKean-Vlasov SDEs appears in~\cite{barbu2023exact}.

Closer to our framework, direct control of the nonlocal Fokker-Planck equation at the PDE level has received comparatively less attention. 
For comparison, the linear Fokker-Planck equation (i.e., $W \equiv 0$) has been extensively studied from a PDE-control point of view, including controllability and Schrödinger bridge connections~\cite{blaquiere1992controllability}, open-loop optimal control~\cite{aronna2021first, Fleig2017}, and Riccati-based feedback stabilization~\cite{breiten2018control, Breiten2023}. 
In the McKean-Vlasov setting, first-order optimality conditions for the PDE were derived in~\cite{albi2016mean, bongini2017mean}, and an optimal control problem for the noisy Kuramoto model is studied in~\cite{chen2025optimal}.
The noisy Kuramoto model is a canonical McKean-Vlasov PDE for mean-field coupled phase oscillators on the circle~\cite{kuramoto1984chemical}.
Related feedback and backstepping-type designs for parabolic drift-diffusion PDEs can be found in~\cite{coron2007control}. 
Two recent contributions closer to the present PDE-control setting are~\cite{bicego2025computation}, which studies the computation and control of unstable steady states through nonlinear model predictive control, and~\cite{albi2025control}, which studies the design of control strategies based on mesoscopic feedback laws.
A complementary information-theoretic perspective on feedback control for the noisy Kuramoto model is developed in~\cite{sowinski2024information}.}

However, a theoretical stabilization framework for deterministic feedback laws in $\mathbb{T}^d$ that handles nonlocal mean-field coupling and multiple equilibria remains lacking.
\RevAdd{Our results provide such a framework for stationary solutions of the nonlinear McKean-Vlasov PDE on $\T^d$, based on a finite-dimensional feedback design, spectral analysis, and an operator Riccati equation.} 
We achieve this by extending the linearization-based method of~\cite{breiten2018control} from linear to nonlinear Fokker-Planck equations by linearizing the McKean-Vlasov PDE around a steady state and analyzing the resulting generator in the weighted space $L^2(\Omega,\bar\mu^{-1})$ (see~\cite{pavliotis2025linearization}). 
\RevAdd{This places our problem closer to mean-field type control, since the control acts on the population law via a single decision maker.}
The main challenge is that the mean-field term $\nabla \cdot (\mu \nabla W * \mu)$ leads to a non-self-adjoint linearized operator in the natural function space of the problem, so the spectrum does not need to be real (in contrast to the linear Fokker-Planck case). 
We address this by proving that only finitely many unstable modes need to be controlled, and show that feedback designed from the linearization locally stabilizes the full nonlinear McKean-Vlasov PDE.

\subsection{Contributions and main results}

We develop a framework for feedback stabilization of the McKean-Vlasov PDE on the torus through linearization-based control design.

We begin by introducing a control term into the McKean-Vlasov equation \eqref{eq:mckean_vlasov} through \RevAdd{a finite-dimensional family of} time-dependent potentials of the form
\[
V(x) \mapsto V(x) + \sum_{j=1}^m u_j(t) \alpha_j(x),
\]
where $u_j(t) \in \mathbb{R}$ are scalar controls to be determined and $\alpha_j$ are prescribed shape functions \RevAdd{on $\Omega$}. 
The controlled PDE becomes
\[
\partial_t \mu = \nabla \cdot \left[ \sigma \nabla \mu + \mu \left( \nabla V + \nabla W * \mu + \sum_{j=1}^m u_j(t) \nabla \alpha_j \right) \right].
\]
For a target steady state $\bar{\mu}$, we analyze the perturbation $y = \mu - \bar{\mu}$ and derive the linearized dynamics
\[
\partial_t y = \mathcal{L} y + \sum_{j=1}^m \mathcal{B}_j u_j(t),
\]
where $\mathcal{L}$ is the linearized McKean-Vlasov operator incorporating both local drift-diffusion and the linearized nonlocal mean-field coupling, and $\mathcal{B}_j = \nabla \cdot (\bar{\mu} \nabla \alpha_j)$ represents the influence of the $j$-th control on the linearized system.
\RevAdd{The definition of the operator $\Lcal$ is given later in~\eqref{eq:linearized_operator}.}

The central theoretical contribution is establishing that feedback control based on this linearization achieves local exponential stabilization. 
\RevAdd{For a prescribed target decay rate $\delta > 0$, we focus on the finitely many eigenvalues of $\Lcal$ with real part at least $-\delta$, i.e., the modes that prevent or slow exponential decay at rate $\delta$.} 
In~\Cref{thm:hautus}, we prove that by choosing the shape functions $\{\alpha_j\}_{j=1}^m$ to solve the elliptic equations
\[
\nabla \cdot (\bar{\mu} \nabla \alpha_j) = \psi_j, \quad j = 1, \ldots, m,
\]
where $\{\psi_j\}_{j=1}^m$ are the eigenfunctions of $\mathcal{L}$ corresponding to \RevAdd{those eigenvalues,} the pair $(\mathcal{L}, \mathcal{B})$ satisfies the infinite-dimensional Hautus test, \RevAdd{the spectral criterion used to establish $\delta$-stabilizability}. 
This choice of shape functions ensures stabilization of precisely those modes that would otherwise prevent or slow exponential decay.

\RevAdd{This finite-dimensional control strategy can be compared with mean field type control formulations such as~\cite{bensoussan2013mean}. 
In those formulations, the drift in the forward equation is determined through a coupled forward-backward system. 
If the value function is restricted to an ansatz of the form $\phi(t,x) = \sum_{j=1}^m p_j(t)\alpha_j(x)$, then the resulting controlled drifts lie in the same finite-dimensional space spanned by $\{\nabla \alpha_j\}_{j=1}^m$ as in our setting.
The difference is that here the coefficients are obtained from a linear-quadratic regulator problem for the linearized dynamics, which leads to a more tractable operator Riccati equation instead of a full forward-backward system.}

The spectral analysis underlying this result relies on a ground-state transformation that maps the linearized operator $\mathcal{L}$ to a Schrödinger-type operator $\mathcal{H} = \mathcal{H}_{\text{loc}} + \mathcal{K}$ acting on $L^2(\mathbb{T}^d)$. 
As established in~\Cref{prop:pure-spectrum}, this operator has compact resolvent and discrete spectrum consisting of isolated eigenvalues of finite multiplicity. 
Crucially, \Cref{lem:spectrum_finite} proves that for any $\delta > 0$, only finitely many eigenvalues satisfy $\Re(\lambda) \geq -\delta$, reducing the stabilization problem to controlling a finite number of modes despite the infinite-dimensional nature of the PDE.

Given the Hautus test verification, we solve the algebraic Riccati equation
\[
(\mathcal{L}^* + \delta I)\Pi + \Pi(\mathcal{L} + \delta I) - \Pi \mathcal{B} \mathcal{B}^* \Pi + \Mcal = 0,
\]
where $\mathcal{M}$ is a positive weight operator and $\mathcal{B} = [\mathcal{B}_1, \ldots, \mathcal{B}_m]$. 
\Cref{prop:ARE} establishes the existence of a unique self-adjoint, nonnegative, bounded solution $\Pi$ and shows that the feedback law
\[
u(t) = -\mathcal{B}^* \Pi y(t)
\]
renders the linearized operator $\mathcal{L}_\Pi = \mathcal{L} + \delta I - \mathcal{B} \mathcal{B}^* \Pi$ exponentially stable.

Our main stabilization result, \Cref{thm:nonlin-existence}, proves that this linear feedback law achieves local exponential stabilization of the full nonlinear McKean-Vlasov equation. 
Specifically, if the initial distribution $\mu_0$ satisfies \RevAdd{$\|\mu_0 - \bar\mu\|_{\bar{\mu}^{-1}} \leq \varepsilon$} for sufficiently small $\varepsilon > 0$, then the closed-loop solution under the feedback $u(t) = -\mathcal{B}^* \Pi y(t)$ satisfies
\RevAdd{\[
\|y(t)\|_{\bar{\mu}^{-1}} \leq Ce^{-\delta t}, \quad \forall t \geq 0,
\]}
where the constant $C$ depends on the initial condition. 
The proof employs maximal regularity theory (\Cref{thm:max-reg-Hpi}) to handle the linear evolution and establishes Lipschitz estimates for the nonlinear terms (\Cref{lem:feedback-nonlinear} and \Cref{lem:remainder}) to complete a contraction mapping argument in the evolution space $W(0,\infty;\mathbb{T}^d)$ \RevAdd{defined in~\eqref{e:evol_space}}.
 
\Cref{fig:kuramoto_control_vs_uncontrol_profiles} provides a qualitative illustration of the proposed methodology. 
For a noisy Kuramoto model ($V=0$, $W=-K\cos (x)$) with supercritical coupling ($K>1$), where the uniform distribution $\bar{\mu} = (2\pi)^{-1}$ is unstable. 
Our feedback law successfully stabilizes this equilibrium that would otherwise be inaccessible. 
Similarly, the control can accelerate convergence to stable equilibria or steer the system between different steady states in models with multiple equilibria.
 
\begin{figure}[!htbp]
  \centering
  \includegraphics[width=0.7\textwidth]{kuramoto_example_stabilization.pdf}
  \caption{{\bf Time-evolution to the unstable steady state.}
  Uncontrolled (left) vs. controlled (right) evolution of \RevAdd{$\mu(t,x)$} for the noisy Kuramoto model with \(\sigma=0.5\), \(V=0\), \(W=-K\cos(x)\).  
  Black: initial density;  gray: intermediate snapshots; red/blue: final density (uncontrolled/controlled).}
  \label{fig:kuramoto_control_vs_uncontrol_profiles}
\end{figure}
%
%
\subsection{Outline of the paper}

\Cref{sec:wellposed} reviews the variational formulation and well-posedness of the McKean-Vlasov equation on the torus, including convergence to equilibrium under convexity assumptions.
\Cref{sec:operator} introduces the linearization around a steady state, the Schrödinger-type reformulation via ground-state transformation, and the spectral properties of the linearized operator. 
\Cref{sec:control} formulates the optimal control problem, derives a Riccati-based feedback law via the Hautus test, and proves local exponential stability of the nonlinear system.
\Cref{sec:numerics} presents numerical experiments on several models illustrating the efficacy of our feedback design.

\section{Preliminaries and well-posedness}
\label{sec:wellposed}

We begin by introducing the notation and function spaces used throughout the paper.
We then recall the well-posedness theory for the McKean-Vlasov equation under periodic boundary conditions and discuss the structure and convergence properties of its stationary states.
Finally, we linearize the dynamics around the equilibrium and reformulate the resulting operator using a ground-state transformation, which will serve as the basis for the feedback control design in subsequent sections.

\subsection{Notation}

Given a real interval $[0,T]$, a normed space $X$ and $1 \le p \le \infty$, we let $L^p(0,T; X)$ denote the set of $L^p$ functions $f : [0,T] \to X$.
We denote by $L^p(\Omega)$ the set of $L^p$ periodic functions defined on $\Omega$.
In addition, we denote by $W^{k,p}(\Omega)$ the periodic Sobolev spaces with $H^k(\Omega) = W^{k,2}(\Omega)$, $L^1_{+}(\Omega) \coloneqq  \{f \in L^1(\Omega) : f \ge 0\}$, $L^2_{0}(\Omega) \coloneqq  \{f \in L^2(\Omega) : \int_{\Omega} f = 0\}$, and for a positive weight $w(x) > 0$, $L^p(\Omega, w) \coloneqq  \{f : fw^{1/p} \in L^p(\Omega)\}$.
\RevAdd{For the set of probability densities on $\Omega$, we denote $\Pac \coloneqq \{f \in L^1_+(\Omega) : \|f\|_1 = 1\}$.}
We write $\|\cdot\|_p$ for the norm in $L^p(\Omega)$ and $\|\cdot\|_{w}$ for the norm in $L^2(\Omega, w)$.
The inner product in $L^2(\Omega)$ is denoted by $\langle \cdot, \cdot \rangle$ and the inner product in $L^2(\Omega, w)$ by $\langle \cdot, \cdot \rangle_w$.
The space ${[H^1(\Omega)]}^*$ denotes the topological dual of $H^1(\Omega)$, with respect to $L^2(\Omega)$ as the pivot space.
We use the differential operators $\nabla = (\partial_{x_1},\dots,\partial_{x_d})$, $\nabla \cdot F = \sum_{i=1}^d\partial_{x_i}F_i$, $\Delta = \nabla \cdot \nabla$, and the time derivative $\partial_t\mu=\frac{\partial\mu}{\partial t}$.
Over a Hilbert space $H$, the set of bounded linear operators is denoted by $\mathbb{L}(H)$.
\RevAdd{For a closed linear operator $\mathcal{T}$ defined on $H$, $\sigma(\mathcal{T})$ and $\rho(\mathcal{T})$ denote its spectrum and resolvent set, respectively. 
For $\lambda \in \mathbb{C}$, $\Re(\lambda)$ and $\Im(\lambda)$ denote the real and imaginary parts of $\lambda$, respectively.}

\subsection{Model setup and variational formulation}

We now introduce the weak formulation of the McKean--Vlasov equation considered in this work, along with the standing assumptions on the model parameters. 
\RevAdd{Then, we state the well-posedness result, and a complete proof is provided in~\Cref{app:full-wellposedness-proof}.}

\begin{assumption}[Model assumptions]\label{assumption:model}
    Assume the confining potential $V \in W^{1,\infty}(\Omega) \cap W^{2,p}(\Omega)$ (for $p=\max\{d,2+\epsilon\}$ and some $\epsilon > 0$) and the interaction potential $W \in W^{2,\infty}(\Omega)$ \RevAdd{are $2\pi$-periodic in each coordinate}; $W$ is coordinate-wise even; the diffusion coefficient $\sigma > 0$ is fixed; the initial condition $\mu_0 \in \Pac$ \RevAdd{is $2\pi$-periodic in each coordinate}.
\end{assumption}

Under \Cref{assumption:model}, we consider the McKean-Vlasov equation on the torus
\begin{equation}
    \label{eq:mv_wellposed}
    \partial_t \mu = \sigma \Delta \mu + \nabla \cdot \bigl(\mu \nabla v[\mu]\bigr), 
    \quad \text{with } v[\mu](x) = V(x) + (W * \mu)(x),
\end{equation}
subject to periodic boundary conditions 
\RevAdd{\[
  \mu(t, x + 2\pi e_k) = \mu(t,x), \quad \forall x \in \R^d,\; t>0,\; k=1,\dots,d,
\]}
and initial data $\mu(0,\cdot) = \mu_0$.

\begin{definition}[Weak solution]\label{def:varsol}
    For $T > 0$, a function $\mu \in L^2(0, T; H^1(\Omega)) \cap L^{\infty}(0, T; L^2(\Omega))$ with $\partial_t \mu \in L^2(0, T; {[H^1(\Omega)]}^*)$ is a {\em weak solution} of \eqref{eq:mv_wellposed} if for every $\varphi \in L^2(0, T; H^1(\Omega))$,
    \[
    \int_0^T \langle \partial_t \mu(t,\cdot), \varphi(t,\cdot) \rangle \, dt
    + \int_0^T \langle \sigma \nabla \mu(t,\cdot) + \mu(t,\cdot)  \nabla v[\mu(t,\cdot)], \nabla \varphi(t,\cdot) \rangle \, dt = 0
    \]
\end{definition}

\RevAdd{Although probability densities are naturally $L^1$ objects, we formulate weak solutions as in \Cref{def:varsol} because the existence theory developed here relies on standard $L^2$ and $H^1$ estimates.
Additionally, \Cref{thm:wellposed} shows that, for $\mu_0 \in L^2(\Omega) \cap \Pac$, the solution remains in $\Pac$ for all $t>0$.}
This notion of solution ensures sufficient time regularity to make the initial condition meaningful. 
In particular, using arguments similar to~\cite[Thm. 3, Sec. 5.9]{evans2022partial}, the mapping $t \mapsto \mu(t, \cdot)$ admits a continuous representative in $L^2(\Omega)$ when $\mu \in L^2(0, T; H^1(\Omega))$ and $\partial_t \mu \in L^2(0, T; {[H^1(\Omega)]}^*)$. 
This guarantees the well-definedness of the initial condition \( \mu(0) = \mu_0 \) in the weak topology.

\begin{theorem}[Global well-posedness]
    \label{thm:wellposed}
    Let $\mu_{0}\in \RevAdd{L^{2}(\Omega)} \cap \Pac$.
    Under \Cref{assumption:model}, there exists a unique weak solution 
    \RevAdd{\[
    \mu \in L^\infty(0,T;L^2(\Omega)) \cap L^2(0,T;H^1(\Omega)),
    \qquad
    \partial_t \mu \in L^2(0,T;[H^1(\Omega)]^*),
    \]}
    to~\eqref{eq:mv_wellposed} with the estimate
    \begin{equation*}
        \|\mu\|_{{L}^{\infty}\left(0,T;{L}^{2}(\Omega)\right)} + \|\mu\|_{{L}^{2}\left(0,T;H^{1}(\Omega)\right)}+\|\partial_t \mu\|_{{L}^{2}\left(0,T;{[H^{1}(\Omega)]}^*\right)}\leq C(T)\|\mu_0\|_{2}.
    \end{equation*}
    for a time-dependent constant $C(T) > 0$.
    Additionally, $\mu(t) \in \Pac$ for all $t > 0$.
\end{theorem}

\begin{proof}
    The proof proceeds via a Picard iteration adapted from~\cite{chazelle2017well}. 
    Specifically, from a smooth initial condition, we construct a sequence $\{\mu_n\}_{n \in \mathbb{N}} \subseteq L^2(0,T; H^1(\Omega))$ of solutions to
    \begin{align}
    	&\begin{cases}
    	\begin{aligned}
    		\partial_t \mu_{n} &= \sigma \Delta \mu_{n} + \nabla \cdot \left(\mu_{n} v[\mu_{n-1}] \right) &\text{in } (0,T] \times \Omega,  \\
    	\mu_{n}(0, \cdot) &= \mu_{0} &\text{on } \Omega,
    	\end{aligned}
    	\end{cases}\label{eq:sequence_evolution}
    \end{align}
    which are linear Fokker-Planck equations with the drift vector field frozen from the previous iterate.
    Existence and uniqueness of solution to~\eqref{eq:sequence_evolution} at each step in $C^{\infty}(0, T; C^{\infty}(\Omega))$ follow from standard results for linear parabolic equations in the torus; see, e.g.,~\cite[Ch. 6]{pavliotis2014stochastic}, ~\cite[Thm. 2.6.13 and Thm. 3.2.1]{pazy2012semigroups}.

    To pass to the limit, we establish uniform (on $n$) a priori estimates for $\mu_n$ in $L^2(0, T; H^1(\Omega)) \cap L^{\infty}(0, T; L^2(\Omega))$ and for $\partial_t \mu_n$ in $L^2(0, T; {[H^1(\Omega)]}^*)$ by exploiting the regularity of $V$ and $W$. 
    These ensure that the sequence $\{\mu_n\}_{n \in \mathbb{N}}$ is bounded in the appropriate Sobolev-Bochner space and admits a subsequence converging weakly to a limit $\mu$.
    Furthermore, we establish that the sequence converges strongly in $L^1(0, T; L^1(\Omega))$, which suffices to produce a weak solution of~\eqref{eq:mv_wellposed}.
    The limiting function $\mu$ \RevAdd{satisfies the same a priori estimate as the approximating sequence, by weak lower semicontinuity of the corresponding norms, and it satisfies the weak formulation of \Cref{def:varsol}.} 
    Existence, therefore, follows. 
    The uniqueness of weak solutions is obtained via a Grönwall argument applied to the difference of two solutions in $L^2$.
    The result is then extended to an $L^2$ initial data using mollification.
    \RevAdd{A complete proof is given in~\Cref{app:full-wellposedness-proof}}.
    We also refer to~\cite{nickl2025bayesian, pavliotis2025linearization} for additional arguments.
\end{proof}

\begin{remark}[Classical regularity under smooth data]
    If $V, W \in W^{2k + 1, \infty}(\Omega)$ and $\mu_0 \in H^{2k}(\Omega) \cap \Pac$ for $k = \ceil*{2 + d/4}$, then the unique weak solution $\mu$ to~\eqref{eq:mv_wellposed} admits a representative in 
    \[
    \mu \in C^1((0,T);\, C^2(\Omega)),
    \]
    after modification on a set of measure zero. 
    This follows from standard parabolic regularity theory and the Sobolev embedding on the torus.
    A complete argument is provided in~\Cref{sup:sec:regularity}.
\end{remark}

\subsection{Stationary states and exponential convergence}
\label{sec:stationary_states}

We recall that a stationary solution $\bar\mu$ of \eqref{eq:mv_wellposed} satisfies
\begin{equation}
    \label{eq:stationary}
    \nabla\cdot\bigl(\bar\mu \nabla v[\bar\mu]\bigr)+ \sigma \Delta \bar\mu = 0\quad\Longleftrightarrow\quad \bar\mu(x) = \frac{1}{Z} \exp\bigl(-\tfrac{1}{\sigma} v[\bar\mu](x)\bigr),
\end{equation}
with $\displaystyle Z = \int_{\Omega} \exp\bigl(-\tfrac{1}{\sigma} v[\bar\mu](x')\bigr) \, dx'$~\cite{tamura1984asymptotic}.

The following \Cref{thm:equilibria} follows from the logarithmic Sobolev inequality induced by displacement convexity, which is rigorously proved in~\cite{malrieu2001logarithmic} for the Euclidean space $\R^d$ and is carried over to the torus with periodic boundary conditions.
We refer to~\cite[Thm. 2.1]{carrillo2003kinetic} and~\cite[Sec. 2.3]{pavliotis2025linearization} for extensions and recall the definition of $H$-stability: a function $W \in L^2(\Omega)$ is said to be $H$-stable if for all $\nu \in L^2(\Omega)$, 
\[
\int_{\Omega \times \Omega} W(x-y) \nu(x) \nu(y) \, dx \, dy \ge 0.
\]

\begin{proposition}[Existence and uniqueness of stationary states]\label{thm:equilibria}
    Under \Cref{assumption:model} and $V, W \in \mathcal C^2(\Omega)$ being either $H$-stable or such that $\|{W}\|_{\infty} < \sigma$.
    Then, 
    \begin{enumerate}
        \item[(i)] there exists a solution $\bar\mu \in \Pac$ to~\eqref{eq:stationary};
        \item[(ii)] if the initial condition has finite relative entropy with respect to $\bar\mu$\RevAdd{, namely $\int_\Omega \mu_0 \log (\mu_0 / \bar\mu) \, dx < \infty$,}
        the solution of \eqref{eq:mv_wellposed} satisfies
        \[
        \|\mu(t,\cdot)-\bar\mu\|_{1} \le e^{-\alpha t} \|\mu_0-\bar\mu\|_{1}
        \]
        for an appropriate constant $\alpha > 0$.
    \end{enumerate}  
\end{proposition}

\begin{remark}[Non-convex case and bifurcations]
    If the interaction potential $W$ is not $H$-stable, equation~\eqref{eq:stationary} \RevAdd{may} have multiple solutions at sufficiently low temperatures \RevAdd{(equivalently, for sufficiently small diffusion $\sigma$)}, leading to phase transitions \RevAdd{(qualitative changes in the set or stability of stationary states)} and metastability \RevAdd{(transient states that persist for a long time before converging to a stable state)}~\cite{carrillo2020long}.
    A classic illustration is the noisy Kuramoto model on $\T$, with $V\equiv0$ and $W(x)=-K\cos(x)$.  
    For $K \le 1$, the uniform density $\mu\equiv(2\pi)^{-1}$ is the unique (and stable) equilibrium.  
    At the critical coupling $K=1$, it loses stability, and for $K>1$, multiple nonuniform steady states bifurcate, all \RevAdd{spatial} translations \RevAdd{on the torus} of
    \begin{equation}
        \label{eq:steady_state_kuramoto}
        \bar\mu(\theta) =\frac{\exp\bigl(2Kr\cos\theta\bigr)}{\int_\T \exp\bigl(2Kr\cos\phi\bigr)\,d\phi}, \quad r = \Psi(2Kr),  \quad \Psi(\theta)=\frac{\int_\T\cos x e^{\theta\cos x}\,dx}{\int_\T e^{\theta\cos x}\,dx},
    \end{equation}
    while the uniform state remains a steady state but is now unstable~\cite{strogatz1991stability, bertini2010dynamical}.
\end{remark}
    
\subsection{Spectral analysis of the linearized dynamics}
\label{sec:operator}

We now analyze the spectral structure of the linearized McKean-Vlasov dynamics around a stationary distribution $\bar\mu$. 
By understanding the spectral properties, we formulate a feedback design that uses information about the modes requiring stabilization.

Fix a stationary state $\bar\mu$ \RevAdd{satisfying the hypotheses of \Cref{thm:equilibria}}, which will serve as the desired target distribution for the feedback control law developed in \Cref{sec:control}.
\RevAdd{Set $\Space \coloneqq L^2\bigl(\Omega,\bar\mu^{-1}\bigr)$} and define the operators, for all $y \in \Space$,
\RevAdd{\begin{equation}
  \label{eq:operators_mckean_vlasov}
  \A \mu \coloneqq \nabla \cdot (\mu \nabla V + \sigma \nabla \mu) \quad \text{and} \quad \W(\mu) \coloneqq \nabla \cdot (\mu\nabla W*\mu),
\end{equation}}%
so that equation~\eqref{eq:mv_wellposed} can be written as $\partial_t \mu = \A \mu + \W(\mu)$, where $\W$ is a nonlinear and nonlocal operator.
The domain of $\A$ is given by $D(\A)$, while the domain of $\W$ is given by $H^1(\Omega) \cap \Space = H^1(\Omega)$.
\RevAdd{Because $V \in W^{1,\infty}(\Omega)$ and $W \in W^{2,\infty}(\Omega)$, both $V$ and $W$ are continuous. Since $\bar\mu \in \Pac \subset L^1(\Omega)$, the convolution $W * \bar\mu$ is continuous on $\Omega$. Hence $\bar\mu \propto \exp(-(V + W*\bar\mu) / \sigma)$
is continuous and strictly positive on the compact domain $\Omega$. 
Therefore it is bounded above and below. 
Consequently,} the weighted and unweighted Sobolev spaces coincide as sets with equivalent norms.
In particular, $H^d(\Omega) \cap \Space = H^d(\Omega)$ for every nonnegative integer $d$.
Hence, well-posedness on $\Space$ immediately follows.

Denote by $\D_W$ the Fréchet derivative of $\W$ at $\bar\mu$, which can be computed as 
\[
\D_W y \coloneqq \nabla \cdot (y \nabla W * \bar\mu + \bar\mu \nabla W * y), \quad y \in \Space,
\]
by noticing that $\W(y + \bar\mu) = \D_W y + \W(y) + \W(\bar\mu)$.
\RevAdd{Then, the perturbation variable 
\begin{equation}
  \label{eq:y_definition}
  y \coloneqq \mu - \bar\mu,
\end{equation}
satisfies 
\begin{equation} 
  \label{eq:y_evolution}
  \partial_t y = \Lcal y + \W(y),
\end{equation}
where 
\begin{equation}
  \label{eq:linearized_operator}
  \Lcal \coloneqq \A + \D_W
\end{equation}}
is the linearized operator around $\bar\mu$.

\subsubsection*{Ground-state transformation}

We perform the (ground-state) unitary transformation \RevAdd{(see, e.g.,~\cite[Sec 4.9]{pavliotis2014stochastic} and~\cite{breiten2018control,Ris84, Stroock93, tomita1976eigenvalue})}, to map the Fokker-Planck operator $\Lcal$ to a {\em Schr\"odinger operator} $\Hcal$ defined in $L^2(\Omega)$.  
Specifically, define the {\em unitary operator}
\[
\unitary\colon \Space \longrightarrow L^2(\Omega), \qquad (\unitary y)(x)=\frac{y(x)}{\sqrt{\bar\mu(x)}}
\]
Verifying that $\unitary$ is an isometric isomorphism with a well-defined inverse is straightforward.
Then, set 
\[
\Hcal \coloneqq -\unitary \Lcal \,\unitary^{-1} \eqqcolon \Hcal_{\mathrm{loc}} + \K,
\]
where
\[
\Hcal_{\mathrm{loc}} = -\sigma\,\Delta + \Psi(x),
\]
is a Schrödinger operator with potential $\Psi(x)=\tfrac1{4\sigma}|\nabla v|^2 - \tfrac12\Delta v$ and $v[\bar\mu] = V + W*\bar\mu$, and $\K$ is the integral operator $\K[\phi](x) \coloneqq \int_{\Omega} k(x,y) \phi(y) \, dy$, where
\[
k(x,y) \coloneqq -\frac{\bar\mu(y)^{1/2}}{\bar\mu(x)^{1/2}} \Bigl(\nabla\bar\mu(x) \cdot \nabla W(x-y) + \bar\mu(x)\,\Delta W(x-y)\Bigr), \quad x,y\in\Omega.
\]

The operator 
\[
\Lcal_{\mathrm{loc}}y \coloneqq -\unitary^{-1} \Hcal_{\mathrm{loc}} \, \unitary y = \sigma \Delta y + \nabla \cdot \bigl(y \nabla v[\bar\mu]\bigr)
\]
is a standard linear Fokker-Planck operator with diffusion $\sigma > 0$ and \RevAdd{frozen} drift \RevAdd{field} $v[\bar\mu]$.
\RevAdd{Once the stationary state $\bar\mu$ is fixed, the same integration by parts argument as in the no-flux case (see \cite[Lem. 3.1]{breiten2018control}) shows that, on the compact manifold $\Omega$, the operator $\Lcal_{\mathrm{loc}}$ is self-adjoint on $\Space$ with compact resolvent.}
In particular, its spectrum is purely discrete and nonpositive, and zero is a simple eigenvalue.

\begin{lemma}[Spectral properties of $\Lcal_{\mathrm{loc}}$]
    \label{lem:L0-spectrum}
    The operator $\Lcal_{\mathrm{loc}}$ is self-adjoint with compact resolvent, $\sigma(\Lcal_{\mathrm{loc}}) \subseteq (-\infty, 0]$, and $\ker(\Lcal_{\mathrm{loc}})=\operatorname{span}\{\bar\mu\}$; $0$ is a simple eigenvalue and $\bar\mu$ is the unique (normalized) steady state.
\end{lemma}

In the following, the nonlocal part $\K$ is shown to be Hilbert-Schmidt and hence compact (see the \hyperref[proof:K-bounded]{proof} in the Appendix).  
By \Cref{lem:perturb-pure}, this implies that the full operator $\Hcal = \Hcal_{\rm loc} + \K$ also has compact resolvent and a purely discrete spectrum consisting only of eigenvalues of finite algebraic multiplicity accumulating only at $\infty$ in modulus.

\begin{lemma}\label{lem:K-bounded}
    The operator $\K$ is Hilbert-Schmidt, hence compact, on $L^2(\Omega)$.
\end{lemma}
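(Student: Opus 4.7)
The plan is to show that $\K$ is an integral operator whose kernel lies in $L^2(\Omega\times\Omega)$; on a separable $L^2$ space this is the standard characterization of Hilbert--Schmidt operators, and Hilbert--Schmidt implies compact.

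The first step is to pin down pointwise bounds on $\bar\mu$ and $\nabla\bar\mu$. Since $\bar\mu(x)=Z^{-1}\exp(-\sigma^{-1}v[\bar\mu](x))$ with $v[\bar\mu]=V+W*\bar\mu$, the hypotheses $V\in C^2(\T^d)$ and $W\in W^{2,\infty}(\T^d)$ together with $\int_\Omega\bar\mu=1$ yield that $v[\bar\mu]$ is bounded on the compact torus. Consequently there exist constants $0<c\le C<\infty$ with $c\le \bar\mu(x)\le C$ uniformly in $x$, so in particular $\bar\mu^{\pm 1/2}\in L^\infty(\Omega)$. Differentiating the self-consistency relation gives $\nabla\bar\mu=-\sigma^{-1}\bar\mu\,(\nabla V+\nabla W*\bar\mu)$, and by the same bounds together with $\nabla V\in C(\T^d)$ and $\nabla W\in L^\infty(\T^d)$, we also get $\nabla\bar\mu\in L^\infty(\Omega)$.

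The second step is to read off the kernel of $\K$ directly from its definition,
\[
k(x,y) \;=\; -\bar\mu(x)^{-1/2}\bigl[\nabla\bar\mu(x)\cdot\nabla W(x-y)+\bar\mu(x)\,\Delta W(x-y)\bigr]\bar\mu(y)^{1/2},
\]
and bound it in $L^\infty(\Omega\times\Omega)$. Each factor is essentially bounded: $\bar\mu^{\pm 1/2}$ and $\nabla\bar\mu$ by Step~1, while $\nabla W,\Delta W\in L^\infty(\Omega)$ by the assumption $W\in W^{2,\infty}(\Omega)$ (the weak Laplacian is in $L^\infty$). Hence $k\in L^\infty(\Omega\times\Omega)$, and since $\Omega=\T^d$ has finite Lebesgue measure, we obtain $k\in L^2(\Omega\times\Omega)$ with an explicit estimate $\|k\|_{L^2(\Omega\times\Omega)}\le |\Omega|\,\|k\|_{L^\infty}$.

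The third and final step is to invoke the standard Hilbert--Schmidt criterion for integral operators on $L^2(\Omega)$: an operator of the form $(\K\phi)(x)=\int_\Omega k(x,y)\phi(y)\,dy$ is Hilbert--Schmidt with $\|\K\|_{\mathrm{HS}}^2=\|k\|_{L^2(\Omega\times\Omega)}^2$, and every Hilbert--Schmidt operator is compact. There is no genuine obstacle here; the only point requiring care is the quantitative lower bound $\bar\mu\ge c>0$, which is what allows the (potentially singular-looking) factor $\bar\mu(x)^{-1/2}$ in the kernel to be harmless. Once this is in hand, the argument is purely algebraic.
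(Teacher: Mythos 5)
Your proposal is correct and follows essentially the same route as the paper: both arguments identify the kernel $k(x,y)$ of $\K$, show $k\in L^2(\Omega\times\Omega)$, and invoke the standard characterization of Hilbert--Schmidt integral operators. The only (immaterial) difference is in how the kernel is bounded: you use compactness of $\T^d$ to get the uniform bounds $0<c\le\bar\mu\le C$ and hence $k\in L^\infty(\Omega\times\Omega)$, whereas the paper observes that $\bar\mu(x)^{-1/2}\nabla\bar\mu(x)=-\sigma^{-1}\sqrt{\bar\mu(x)}\,\nabla v[\bar\mu](x)$ cancels the inverse weight exactly and bounds $|k(x,y)|\le a(x)b(y)$ with $a,b\in L^2(\Omega)$, thereby never needing the lower bound on $\bar\mu$.
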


\begin{remark}
    The kernel $k$ of $\K$ decomposes as
    \[
    k = k_{\mathrm{sym}} + k_{\mathrm{asym}}, \qquad 
    k_{\mathrm{asym}}(x,y) = \frac{[\bar\mu(x)\bar\mu(y)]^{1/2}}{\sigma}\, \bigl(\nabla v[\bar\mu](x)\cdot \nabla W(x-y)\bigr).
    \]
    As shown in \Cref{lem:spectrum_finite}, the operator norm of $\K_{\mathrm{asym}}$ bounds the imaginary parts of the spectrum of $\Hcal$.
    In particular, if $V \equiv 0$ and $\bar\mu$ is uniform, $\K$ is self-adjoint.
\end{remark}

\begin{proposition}[Discrete spectrum of $\Hcal$]\label{prop:pure-spectrum}
    The operator $\Hcal$ has compact resolvent. 
    Its spectrum consists of isolated (possibly complex) eigenvalues of finite multiplicity accumulating only at $\infty$ in modulus, i.e., $\sigma(\Hcal) = \{\lambda_n\}_{n \ge 1}$, $\lambda_n$ isolated, finite algebraic multiplicity, and $|\lambda_n| \to \infty$.
    Furthermore, there exist (generalized) eigenfunctions $\{\varphi_n\}$ of $\Hcal$ and corresponding eigenfunctions $\{\varphi_n^*\}$ of $\Hcal^*$ such that $\langle \varphi_n,\varphi_m^*\rangle = \delta_{nm}$, and the linear spans of $\{\varphi_n\}$ and $\{\varphi_n^*\}$ are both dense in $L^2(\Omega)$.
\end{proposition}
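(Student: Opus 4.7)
The plan is to exploit the decomposition $\Hcal = \Hcal_{\mathrm{loc}} + \K$ given in the excerpt. The first summand is a Schrödinger operator with smooth bounded potential on the compact manifold $\T^d$, hence self-adjoint with compact resolvent; the second is Hilbert--Schmidt (and thus compact) by \autoref{lem:K-bounded}. Consequently $\Hcal$ is a compact perturbation of a self-adjoint operator with compact resolvent, and the proposition splits into (i) stability of compact resolvents under compact perturbations, which yields discreteness of the spectrum, and (ii) a Keldysh-type completeness theorem for the root-vector system of such perturbations.

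For the first step I would verify that $\Hcal_{\mathrm{loc}} = -\sigma \Delta + \Psi$ has compact resolvent. Since $V \in C^2(\T^d)$, $W \in W^{2,\infty}(\T^d)$, and $\bar\mu = Z^{-1} e^{-v/\sigma}$ is smooth and bounded away from zero, the potential $\Psi = \tfrac{1}{4\sigma}|\nabla v|^2 - \tfrac12 \Delta v$ is continuous on the compact torus and uniformly bounded. So $\Hcal_{\mathrm{loc}}$ is a bounded self-adjoint perturbation of $-\sigma\Delta$ on $H^2(\T^d)$, and the Rellich--Kondrachov compact embedding $H^2(\T^d) \hookrightarrow L^2(\T^d)$ yields a compact resolvent, together with the Weyl asymptotics $\mu_n \sim c_d n^{2/d}$ for its real eigenvalues. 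For $\lambda = -M$ with $M$ large, $\|\K(\Hcal_{\mathrm{loc}} - \lambda)^{-1}\| < 1$, and the second resolvent identity
\[
(\Hcal - \lambda)^{-1} = (\Hcal_{\mathrm{loc}} - \lambda)^{-1}\bigl[I + \K(\Hcal_{\mathrm{loc}} - \lambda)^{-1}\bigr]^{-1}
\]
exhibits $(\Hcal - \lambda)^{-1}$ as a compact operator times a bounded invertible one. Compactness of the resolvent of $\Hcal$, discreteness of $\sigma(\Hcal)$, finite algebraic multiplicity of each eigenvalue, and $|\lambda_n| \to \infty$ follow at once.

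For the reality of the spectrum I would use the algebraic observation that the change of variables $y = \bar\mu z$ yields the factorization
\[
\Lcal(\bar\mu z) = \nabla \cdot \bigl(\bar\mu \nabla(Sz)\bigr) = \bar\mu\,P S z,
\]
where $Pz = \bar\mu^{-1}\nabla\cdot(\bar\mu \nabla z)$ and $Sz = \sigma z + W*(\bar\mu z)$ are both self-adjoint on $L^2(\Omega,\bar\mu)$ (the symmetry of $S$ uses evenness of $W$). Hence $\Lcal$ is similar to $PS$. When $\bar\mu$ minimizes $\mathcal{F}$, the Hessian $S$ is positive on the mean-zero subspace and $PS$ has the same spectrum as the self-adjoint operator $S^{1/2}P S^{1/2}$, so $\sigma(\Hcal) = -\sigma(\Lcal)$ is real. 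Completeness and biorthogonality are then obtained from Keldysh's theorem: since $(\Hcal_{\mathrm{loc}} - \lambda_0)^{-1}$ belongs to every Schatten class with index $p > d/2$, the root vectors $\{\varphi_n\}$ of the compact perturbation $\Hcal$ span a dense subspace of $L^2(\Omega)$; applying the same argument to $\Hcal^* = \Hcal_{\mathrm{loc}} + \K^*$ yields density of the span of $\{\varphi_n^*\}$, and the relation $\langle\varphi_n,\varphi_m^*\rangle = \delta_{nm}$ is produced on each finite-dimensional spectral subspace via Riesz projectors. The main technical obstacle is the Keldysh step: verifying the resolvent-growth conditions along suitable rays in $\C$ and the Schatten-class membership of $\K(\Hcal_{\mathrm{loc}} - \lambda_0)^{-1}$ requires care, although it degenerates into the classical spectral theorem in the self-adjoint case, where one may take $\varphi_n^* = \varphi_n$.
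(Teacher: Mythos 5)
Your treatment of the compact resolvent is essentially the paper's: the paper also writes $\Hcal=\Hcal_{\mathrm{loc}}+\K$, gets compactness of the resolvent of $\Hcal_{\mathrm{loc}}=-\sigma\Delta+\Psi$ from the discrete spectrum of $-\Delta$ on the torus plus boundedness of $\Psi$, and then absorbs the bounded perturbation $\K$ via a Kato-type perturbation lemma (\autoref{lem:perturb-pure}) combined with \autoref{lem:K-bounded}; your Neumann-series/second-resolvent-identity argument at $\lambda=-M$ is the same mechanism made explicit. Where you genuinely diverge is that you actually attempt the remaining claims, which the paper's proof leaves implicit: for reality of $\sigma(\Hcal)$ you use the factorization $\Lcal(\bar\mu z)=\nabla\cdot(\bar\mu\nabla(Sz))$ and the similarity of $PS$ to $S^{1/2}PS^{1/2}$, and for completeness and biorthogonality you invoke a Keldysh-type theorem. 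This is more informative than the paper, but note two caveats. First, the similarity argument requires $S\succ 0$ on the mean-zero subspace, i.e.\ that $\bar\mu$ is a linearly stable local minimizer of $\mathcal F$; the paper explicitly linearizes around \emph{unstable} equilibria (e.g.\ the incoherent Kuramoto state for $K>1$), where $S$ has negative directions and $PS$ is no longer manifestly similar to a self-adjoint operator, so reality is not established in exactly the regime the control design targets. Second, the Keldysh step is only sketched: the Schatten-class membership of the resolvent and the resolvent-growth estimates along rays are asserted rather than verified, and these are precisely the hypotheses that make such completeness theorems nontrivial. Finally, both your Weyl asymptotics and the paper's own closing remark that $\Hcal$ is bounded from below show that the eigenvalues can accumulate only at $+\infty$, so the statement's ``$\pm\infty$'' should be read as $+\infty$; your proof is consistent with this, the statement as written is not.
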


\begin{proof}
    On the compact torus $\T^d$, the Laplacian $-\Delta$ with domain $H^2(\Omega)$ is self-adjoint in $L^2(\Omega)$ and has compact resolvent, since $\sigma(-\Delta) = \{|k|^2 : k\in\mathbb{Z}^d\}$ is discrete and unbounded above.
    Adding the operator $\psi \mapsto \Psi \psi$ to $-\sigma \Delta$, with the bounded function $\Psi$, yields $\Hcal_{\mathrm{loc}}$, which remains self-adjoint on $H^2(\Omega)$ and still has compact resolvent by \Cref{lem:perturb-pure}.
    Finally, since $\K$ is compact by \Cref{lem:K-bounded}, the full operator $\Hcal$ still has compact resolvent. 
    \cite[Ch. III, Sec. 8, Thm. 6.29]{kato2013perturbation} concludes the theorem.
\end{proof}

Because $\Hcal$ is unitarily equivalent up to a minus sign to $\Lcal$, its spectrum is clearly the negative of that of $\Lcal$, i.e., $\sigma(\Hcal)=-\sigma(\Lcal)$.
Indeed, for any $\lambda \in \C$, 
\[
(\Hcal-\lambda I)\varphi=0 \iff (\Lcal + \lambda I)(\unitary^{-1}\varphi)=0.
\]
Moreover, mass conservation 
\[
0 = \int_\Omega \Lcal y\,dx = \langle (\A+\D_W)y,\bar\mu\rangle_{\bar\mu^{-1}}, \quad \forall y \in \Space
\]
implies $\Lcal^* \bar\mu = 0$.
Hence $\bar\mu$ is a nontrivial element of the kernel of the adjoint, and $0 \in \sigma(\Lcal)$.
Thus, we have proved:

\begin{proposition}\label{prop:spectrum_relation}
    The spectrum of the operator $\Hcal$ is the negative of the spectrum of the operator $\Lcal$, i.e., $\sigma(\Hcal) = -\sigma(\Lcal)$.
    Further, $\varphi$ is an eigenfunction of $\Hcal$ with eigenvalue $\lambda$ if and only if $\unitary^{-1} \varphi$ is an eigenfunction of $\Lcal$ with eigenvalue $-\lambda$.
    Finally, $0 \in \sigma(\Lcal)$.
\end{proposition}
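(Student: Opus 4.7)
The plan is to deduce both conclusions from the unitary equivalence $\mathcal{H}=-\mathcal{U}\mathcal{L}\mathcal{U}^{-1}$ established by the ground-state transform, and then to invoke a mass-conservation / divergence argument for the last assertion.

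First, since $\mathcal{U}:\mathcal{X}\to L^2(\Omega)$ is an isometric isomorphism, the closed operators $\mathcal{L}$ on $\mathcal{X}$ and $-\mathcal{H}$ on $L^2(\Omega)$ are unitarily similar with matching domains $D(\mathcal{H})=\mathcal{U}(D(\mathcal{L}))$. Unitarily equivalent closed operators share resolvent sets, so $\sigma(-\mathcal{H})=\sigma(\mathcal{L})$, and the trivial identity $\sigma(-T)=-\sigma(T)$ (which follows from $\lambda I-(-T)=-((-\lambda)I-T)$) then yields $\sigma(\mathcal{H})=-\sigma(\mathcal{L})$. The eigenfunction correspondence is immediate from the same similarity: $\mathcal{H}\varphi=\lambda\varphi$ is equivalent, after multiplying by $-\mathcal{U}^{-1}$ on the left, to $\mathcal{L}(\mathcal{U}^{-1}\varphi)=-\lambda\,\mathcal{U}^{-1}\varphi$, so $\varphi\mapsto\mathcal{U}^{-1}\varphi$ is the required bijection between eigenfunctions, flipping the sign of the eigenvalue.

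For $0\in\sigma(\mathcal{L})$, I would exploit the total-divergence structure of $\mathcal{L}$. Both $\mathcal{A}y=\nabla\!\cdot(\sigma\nabla y+y\nabla V)$ and $\mathcal{D}_W y=\nabla\!\cdot(y\,\nabla W*\bar\mu+\bar\mu\,\nabla W*y)$ are in divergence form, so periodicity on $\mathbb{T}^d$ gives $\int_\Omega \mathcal{L}y\,dx=0$ for every $y\in D(\mathcal{L})$. Pairing against $\bar\mu\in\mathcal{X}$ (note $\|\bar\mu\|_{\mathcal{X}}^2=\int_\Omega\bar\mu\,dx=1<\infty$) in the weighted inner product,
\[
\langle \mathcal{L}y,\bar\mu\rangle_{\mathcal{X}}
=\int_\Omega \mathcal{L}y(x)\,\bar\mu(x)\,\bar\mu(x)^{-1}\,dx
=\int_\Omega \mathcal{L}y(x)\,dx=0
\]
for all $y\in D(\mathcal{L})$. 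Hence $\bar\mu\in\ker(\mathcal{L}^{*})\setminus\{0\}$, so $0\in\sigma_p(\mathcal{L}^{*})$. By \autoref{prop:pure-spectrum} and the unitary equivalence, $\mathcal{L}$ has compact resolvent, so $\sigma(\mathcal{L})=\sigma_p(\mathcal{L})$ and $\sigma_p(\mathcal{L}^{*})=\overline{\sigma_p(\mathcal{L})}$; since $\bar 0=0$, this forces $0\in\sigma_p(\mathcal{L})\subset\sigma(\mathcal{L})$.

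The main obstacle is minor: essentially bookkeeping around the weighted inner product on $\mathcal{X}$ and verifying that the unitary similarity transfers domains cleanly. No analytical difficulty arises, since the divergence/mass-conservation identity carries all the content needed for the final step.
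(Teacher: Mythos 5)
Your proposal is correct and follows essentially the same route as the paper: the spectrum relation and eigenfunction correspondence via the unitary similarity $\Hcal=-\unitary\Lcal\,\unitary^{-1}$, and $0\in\sigma(\Lcal)$ via the divergence/mass-conservation identity showing $\bar\mu\in\ker(\Lcal^*)$. Your final step passing from $0\in\sigma_p(\Lcal^*)$ to $0\in\sigma_p(\Lcal)$ using the compact resolvent is a slightly more careful justification than the paper's (which asserts the conclusion directly from the kernel of the adjoint), but it is the same argument in substance.
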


Before turning to the control design, we state a spectral property that will be necessary in the proof of \Cref{thm:hautus}.  
Namely, although the full operator $\Hcal$ acts on an infinite-dimensional space, only finitely many of its eigenvalues can lie above any prescribed real level.
This ensures that only a finite number of modes need to be stabilized by a feedback control.

\begin{lemma}[Finiteness of spectrum above $-\delta$]\label{lem:spectrum_finite}
  For any $\delta>0$, the set
  \[
  \bigl\{\lambda\in\sigma(\Lcal) : \Re(\lambda) \ge -\delta\bigr\}
  \]
  is finite.
\end{lemma}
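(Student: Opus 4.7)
The plan is to transfer the problem from $\Lcal$ to the Schr\"odinger-type operator $\Hcal$ via \autoref{prop:spectrum_relation}, and then combine the compact resolvent of $\Hcal$ (from \autoref{prop:pure-spectrum}) with a numerical-range bound derived from the splitting $\Hcal = \Hcal_{\mathrm{loc}} + \K$. Since $\sigma(\Lcal) = -\sigma(\Hcal)$, the set in question equals $-\{\mu \in \sigma(\Hcal) : \Re(\mu) \le \delta\}$, so it suffices to show this latter set is finite.

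The first step is qualitative: by \autoref{prop:pure-spectrum}, $\Hcal$ has compact resolvent, so $\sigma(\Hcal)$ is a discrete set of isolated eigenvalues of finite algebraic multiplicity with no finite accumulation point, and moreover every spectral point carries an honest eigenvector. Consequently, any \emph{bounded} subset of $\sigma(\Hcal)$ is automatically finite, and the problem reduces to confining the eigenvalues satisfying $\Re(\mu) \le \delta$ to a bounded region of $\C$.

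The second step delivers the quantitative confinement through a numerical-range estimate. The self-adjoint piece $\Hcal_{\mathrm{loc}} = -\sigma\Delta + \Psi$ is bounded from below by $c_0 := -\|\Psi\|_{L^\infty(\T^d)}$ (since $-\sigma\Delta \ge 0$ and $\Psi \in L^\infty(\T^d)$ under the standing regularity of $V$ and $W$), while the non-symmetric piece $\K$ is bounded by \autoref{lem:K-bounded}. For any normalized eigenvector $\phi$ with $\Hcal\phi = \mu\phi$,
\[
\mu = \langle \Hcal_{\mathrm{loc}}\phi,\phi\rangle + \langle \K\phi,\phi\rangle,
\]
where the first summand is real and no less than $c_0$. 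Taking real and imaginary parts and using $|\langle \K\phi,\phi\rangle| \le \|\K\|$, one obtains
\[
\Re(\mu) \ge c_0 - \|\K\|, \qquad |\Im(\mu)| \le \|\K\|.
\]
Combined with $\Re(\mu) \le \delta$, this traps $\mu$ inside the bounded rectangle $[c_0 - \|\K\|,\delta] \times [-\|\K\|,\|\K\|]$, and the discreteness from step one closes the argument.

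The main obstacle I anticipate is conceptual rather than computational: because $\K$ need not be symmetric, $\Hcal$ may possess genuinely complex eigenvalues and the spectral theorem is not directly applicable, so one cannot simply argue that eigenvalues of $\Hcal$ tend to $+\infty$ along the real line. The numerical-range estimate is the natural device to sidestep this, and it is legitimate precisely because compact resolvent forces every spectral point to be an eigenvalue, giving a vector to pair against. The only routine verification left implicit is that $\Psi = \tfrac{1}{4\sigma}|\nabla v[\bar\mu]|^2 - \tfrac12\Delta v[\bar\mu] \in L^\infty(\T^d)$, which follows from $V \in C^2(\T^d)$, $W \in W^{2,\infty}(\T^d)$, and Young's inequality applied to $W*\bar\mu$.
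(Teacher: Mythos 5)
Your proposal is correct. It follows the paper's overall strategy---pass to $\Hcal = \Hcal_{\mathrm{loc}} + \K$ via the unitary conjugation, exploit that $\Hcal_{\mathrm{loc}}$ is self-adjoint and bounded below while $\K$ is bounded, and finish with discreteness of the spectrum---but the key quantitative step is different. The paper invokes a standard perturbation argument to obtain the spectral inclusion $\sigma(\Hcal) \subseteq \bigcup_n \bar D(\mu_n, \|\K\|)$, where $\mu_n \to +\infty$ are the eigenvalues of $\Hcal_{\mathrm{loc}}$; this localizes the perturbed spectrum near the unperturbed real eigenvalues and yields both $|\Im(\mu)| \le \|\K\|$ and a lower bound on $\Re(\mu)$ that in fact tends to $+\infty$ along the sequence. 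You instead take a normalized genuine eigenvector (whose existence is exactly what the compact resolvent buys you) and read off $\Re(\mu) \ge -\|\Psi\|_{L^\infty} - \|\K\|$ and $|\Im(\mu)| \le \|\K\|$ from the numerical range. Your version is more elementary and self-contained---it avoids the unproved resolvent/Neumann-series inclusion---at the cost of giving only a crude uniform lower bound on $\Re(\mu)$ rather than the asymptotic localization; for the finiteness claim both suffice once combined with the constraint $\Re(\mu) \le \delta$ and the absence of finite accumulation points. One small notational caveat: the paper's $\langle\cdot,\cdot\rangle$ is the bilinear $L^2$ pairing, so your quadratic-form identities should be read with the Hermitian inner product on the complexified space; with that understanding, $\langle \Hcal_{\mathrm{loc}}\phi,\phi\rangle = \sigma\|\nabla\phi\|^2 + \int_\Omega \Psi|\phi|^2$ is indeed real and bounded below as you claim.
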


\begin{proof}
    Since $\Hcal_{\rm loc}$ is self-adjoint, bounded below, and has compact resolvent, its eigenvalues $\{\mu_n\}_{n \in \mathbb{N}}$ are real, of finite multiplicity, and satisfy $\mu_n \to+\infty$.
    By a standard perturbation argument,
    \[
    \sigma(\Hcal) \subseteq \bigcup_{n=1}^{\infty} \bar{D}(\mu_n, \|\mathcal{K}\|),
    \]
    where $D(a,r) = \{z \in \C:|z-a|<r\}$.
    Thus each $\lambda \in \sigma(\Hcal)$ satisfies $|\lambda - \mu_n| \le \|\mathcal{K}\|$ for some $n \in \mathbb{N}$, which implies $|\Im(\lambda)| \le \|\mathcal{K}\|$ and $\Re(\lambda) \ge \min_{n} \mu_n - \|\mathcal{K}\| \ge C > -\infty$ for a constant $C$.
    By \Cref{prop:pure-spectrum}, the eigenvalues of $\Hcal$ satisfy $|\lambda_k| = \sqrt{\Im(\lambda_k)^2 + \Re(\lambda_k)^2} \to \infty$ as $k\to \infty$, hence $\Re(\lambda_k) \to \infty$.
    Consequently, only finitely many $\eta \in \sigma(\Hcal)$ can satisfy \RevAdd{$\Re(\eta) \le \delta$}, and, therefore, only finitely many $\lambda \in \sigma(\Lcal)$ can satisfy $\Re(\lambda) \ge -\delta$.
\end{proof}

\begin{remark}
    Since $\mathcal{L}$ is the sum of $-\sigma \Delta$ and a bounded operator, 
    \[
    D(\Lcal) = D(-\sigma \Delta) = H^2(\Omega) \cap \Space = H^2(\Omega).
    \]
\end{remark}

From the spectral analysis presented in this section, it follows that the linearized McKean-Vlasov operator $\Lcal$ has a discrete spectrum with finitely many eigenvalues with real part larger than $-\delta$, which are the only modes that need to be controlled.
In the next section, we exploit this modal decomposition to formulate and solve a finite-dimensional Riccati equation for feedback stabilization.

\subsection{Decoupling via the zero-mean projection}
\label{subsec:projection}

The linear operator $\Lcal$ \RevAdd{defined in~\eqref{eq:linearized_operator}} has an eigenvalue at $0$ arising from mass conservation (see \Cref{prop:spectrum_relation}).
In particular, if $y(t,x)=\mu(t,x)-\bar\mu(x)$, then $\int_{\Omega}y(t,x)\,dx=0$ for all $t \ge 0$.
Consequently, instead of the full space $\Space$, our dynamics and control are restricted to the zero-mean subspace
\[
\Space_0 \coloneqq \left\{y \in \Space : \int_{\Omega} y(x) \,dx = 0 \right\}.
\]
In the sequel we introduce the projection $\Pcal$ onto $\Space_0$ so that 
$\Space_0 = \operatorname{im}(\Pcal)$, and derive the reduced dynamics on this subspace.

Define the projection on $\Space$
\[
\Pcal y \coloneqq  y - \left(\int_{\Omega}y(x) \, dx\right) \bar\mu,
\] 
so that $\Space_0 \coloneqq \operatorname{im}(\Pcal)$, and decompose $y$ via the projections $\Pcal$ and $\Qcal \coloneqq I - \Pcal$, i.e.,
\[
y_p \coloneqq \Pcal y, \quad y_q \coloneqq \Qcal y, \quad \Space = \operatorname{im}(\Pcal) \oplus \operatorname{im}(\Qcal).
\]
Applying $\Pcal$ and $\Qcal$ to the evolution PDE \RevAdd{for $y$ given in~\eqref{eq:y_evolution}}
\[
\partial_t y_p + \partial_t y_q = \Lcal(y_p + y_q) + \W(y_p + y_q)
\]
yields 
\[
\partial_t y_q = 0 \implies y_q(t) = y_q(0) = \Qcal\mu_0 - \bar\mu
\]
since $\Qcal \Lcal = 0$ and $\Qcal \W = 0$, and 
\[
\partial_t y_p = \Pcal\Lcal y_p + \Pcal \W(y_p), \qquad y_p(0) = \Pcal\mu_0.
\]
Given that $\Pcal\Lcal = \Lcal$ on $H^2(\Omega)$ and $\Pcal\W = \W$ on $H^1(\Omega)$,
\[
\partial_t y_p = \Lcal_0\,y_p + \W_0(y_p), \qquad y_p(0)= \Pcal\mu_0,
\]
where
\[
\Lcal_0 = \Lcal I_{\Pcal}, \qquad \W_0(y_p) = \W(I_{\Pcal}y_p), 
\]
and $I_{\Pcal} : \Space_0 \to \Space$ is the canonical inclusion.

\begin{remark}\label{remark:kernel_L0}
    By working on the space $\Space_0 = \operatorname{im}(P)$, we have that, for all $f \in \Space_0$,
    \[
    \langle f, \bar\mu \rangle_{\bar\mu^{-1}} = \int_{\Omega} f(x) \, dx = 0
    \]
    so $\bar\mu \in \Space_0^{\perp}$.
    The eigenfunctions of $\Lcal_0$ cannot belong to $\operatorname{span}(\bar\mu)$.
\end{remark}

Under the ground-state transform $\unitary$, the zero-mean projection $\Pcal$ on $\Space$ induces the orthogonal projector
\[
\Pcal_H  = \unitary \Pcal\,\unitary^{-1} \colon L^2(\Omega) \to \{\sqrt{\bar\mu}\}^\perp.
\]
Since $\Pcal$ commutes with $\Lcal$ (mass conservation), it follows that $\Pcal_H$ commutes with $\Hcal$.  
Hence the reduced Schrödinger operator
\[
\Hcal_0  = \Pcal_H \Hcal
\]
is well-defined on the codomain $\operatorname{im}(\Pcal_H) = \{\sqrt{\bar\mu}\}^\perp \subseteq L^2(\Omega)$, and corresponds to the projection $\Pcal \Lcal$ on the weighted side.

\section{Control Problem Formulation and Riccati Feedback}
\label{sec:control}

In this section, we pose the feedback-stabilization problem for the McKean-Vlasov dynamics on the torus and state the Riccati-based control design using the linearization step discussed in the previous section.

\subsection{Formulation}

We steer equation~\eqref{eq:mv_wellposed} by adding a time-dependent potential of the form
\[
 V(x) \mapsto V(x) + \sum_{j=1}^m u_j(t) \alpha_j(x),
\]
where each $\alpha_j\in W^{1,\infty}(\Omega) \cap W^{2,p}(\Omega)$ is a prescribed {\em shape function}, which will be chosen ad hoc to meet the stability condition, and $u_j(t) \in \R$ are scalar controls.
\RevAdd{The admissible class for $u = (u_1,\dots,u_m)$ is introduced below in the linear-quadratic formulation.}
\RevAdd{Using the operators $\A$ and $\W$ defined in \eqref{eq:operators_mckean_vlasov}, the} controlled PDE becomes
\begin{equation}\label{eq:controlled_mckean_vlasov}
  \partial_t\mu = \A\mu + \W(\mu) + \sum_{j=1}^m u_j(t)\,\Nop_j\mu,
\end{equation}
where $\Nop_j : D(\Nop_j) \subseteq \Space \to \Space$ is defined as $\Nop_j \mu \coloneqq \nabla \cdot (\mu \nabla \alpha_j)$ with $D(\Nop_j) = H^1(\Omega)$.
The equation for \RevAdd{the perturbation $y$~\eqref{eq:y_definition}} around a steady state $\bar\mu$ is
\begin{equation}\label{eq:abstract_formulation}
  \partial_t y = \Lcal y + \sum_{j=1}^m u_j(t) \Nop_j y + \sum_{j=1}^m \B_j u_j(t) + \W(y),
\end{equation}
where \RevAdd{$\Lcal$ is defined in~\eqref{eq:linearized_operator} and} $\B_j \coloneqq \Nop_j\bar\mu$.
The first order approximation of this equation is given by
\begin{equation}\label{eq:linearized_formulation}
  \partial_t y = \Lcal y + \sum_{j=1}^m \B_j u_j(t).
\end{equation}

We apply the zero-mean projection $\Pcal$ onto $\Space_0$ and define
\[
  \Nop_{0,j} \coloneqq \Nop_j I_{\Pcal}, \qquad \B_{0,j} \coloneqq \B_j \quad j=1,\dots,m
\]
to obtain operators defined on $\Space_0$. 
Since $\Pcal \Nop_j = \Nop_j$, $\Pcal \B_j = \B_j$, $\Qcal \Nop_j = 0$, and $\Qcal \B_j = 0$, on the respective domains, the projected dynamics read as
\[
\partial_t y_p = \mathcal{L}_0 y_p + \sum_{j=1}^m u_j(t) \Nop_{0,j} y_p + \sum_{j=1}^m \B_{0,j} u_j(t) + \W_0(y).
\]

\noindent\textbf{Notational convention.} From now on we write
\[
y \equiv y_p,\quad \Lcal \equiv \Lcal_0,\quad \W \equiv \W_0, \quad \Hcal_0 \equiv \Hcal, \quad \Nop_j \equiv \Nop_{0,j}, \quad \B_{0,j} \equiv \B_j
\]
with the understanding that all of these objects act on the zero-mean space $\Space_0$, or, after the ground-state transform, on $L_0^2(\Omega)$.

\subsection{Linear-quadratic control problem}

\RevAdd{For a prescribed target exponential decay rate $\delta > 0$} and a self-adjoint, positive operator $\Mcal : \Space_0 \to \Space_0$, define the cost
\begin{equation}\label{eq:cost}
  \mathcal J(y,u) \coloneqq \frac12\int_0^\infty e^{2\delta t} \Bigl(\langle y(t),\Mcal y(t)\rangle_{\bar\mu^{-1}} + |u(t)|_2^2\Bigr) \, dt.
\end{equation}
\RevAdd{The exponential weight reflects the goal of achieving stabilization with decay rate at least $\delta$.
Accordingly, the admissible control space is
\[
L^2_\delta(0,\infty;\mathbb R^m) \coloneqq \left\{u:(0,\infty)\to\mathbb R^m : \int_0^\infty e^{2\delta t}|u(t)|_2^2 \, dt < \infty\right\}.
\]}%
Under the (zero-mean) linearized dynamics~\eqref{eq:linearized_formulation}, we set $z(t) \coloneqq  e^{\delta t}y(t)$, $v_j(t) \coloneqq  e^{\delta t}u_j(t)$, obtaining the standard linear-quadratic problem
\begin{align}
  \min_{v \in L^2(0,\infty; \R^m)} \;& \frac12\int_0^\infty \bigl(\langle z(t),\M z(t)\rangle_{\bar\mu^{-1}} + |v(t)|_2^2\bigr)\,dt, \label{eq:cost_transformed}
\\
  &\partial_t z = (\Lcal + \delta I) z + \B v, \label{eq:equation_transformed}
\end{align}
where $\B u \coloneqq \sum_{j=1}^{m}\mathcal B_j u_j\;(t)$.

\begin{theorem}[Hautus $\delta$-stabilization]
  \label{thm:hautus}
  Fix $\delta>0$.  
  Assume the first $m$ eigenpairs $\{(\lambda_j,\psi_j)\}_{j=1}^m$ of $\Lcal$ satisfy $\Re\lambda_j \ge -\delta$.  
  If the shape functions $\alpha_j$ solve
  \begin{equation}
      \label{eq:B_definition}
      \nabla \cdot\bigl(\bar\mu \nabla\alpha_j\bigr) = \psi_j, \quad j=1,\dots,m,
  \end{equation}
  with periodic boundary conditions, then $(\Lcal, \B)$ satisfies the infinite-dimensional Hautus test and is $\delta$-stabilizable.
\end{theorem}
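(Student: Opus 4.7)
My plan is to reduce the infinite-dimensional Hautus test to a finite-dimensional linear-algebra statement on the unstable spectral subspace of $\Lcal$, exploiting the biorthogonal system from \autoref{prop:pure-spectrum} and the tailored shape-function prescription \eqref{eq:B_definition}. The underlying idea is that forcing $\nabla\cdot(\bar\mu\nabla\alpha_j)=\psi_j$ makes the control directions $\B_j$ coincide exactly with the unstable eigenfunctions $\psi_j$, whereupon the rank condition becomes immediate from biorthogonality.

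First I would collect the spectral ingredients. By \autoref{lem:spectrum_finite} the set $\{\lambda\in\sigma(\Lcal):\Re\lambda\ge-\delta\}$ is finite and, by hypothesis, equals $\{\lambda_1,\dots,\lambda_m\}$ counted with multiplicity. Transporting the biorthogonal system of \autoref{prop:pure-spectrum} back from $\Hcal$ to $\Lcal$ via the unitary $\unitary$ yields left eigenfunctions $\{\psi_j^*\}_{j=1}^m$ of $\Lcal^*$ with $\Lcal^*\psi_j^*=\bar\lambda_j\psi_j^*$ and $\langle\psi_j,\psi_k^*\rangle_{L^2_{\bar\mu^{-1}}}=\delta_{jk}$; together these span the finite-dimensional unstable invariant subspace.

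Next I would verify that \eqref{eq:B_definition} is uniquely solvable. Since $\psi_j$ is an eigenfunction of $\Lcal$ on $\Space_0$, it has zero spatial mean, which is exactly the Fredholm compatibility condition for the weighted Poisson problem $\nabla\cdot(\bar\mu\nabla\alpha_j)=\psi_j$ on $\T^d$. Lax-Milgram applied to $a(\alpha,\phi)=\int_{\T^d}\bar\mu\,\nabla\alpha\cdot\nabla\phi$ on $H^1(\T^d)/\R$ then produces $\alpha_j$, with coercivity following from the uniform positivity of the smooth density $\bar\mu$ on the compact torus. Substituting back yields $\B_j=\Nop_j\bar\mu=\nabla\cdot(\bar\mu\nabla\alpha_j)=\psi_j$, so that $\B u=\sum_{j=1}^m u_j\psi_j$ and its adjoint is given by $(\B^*\phi)_j=\langle\psi_j,\phi\rangle_{L^2_{\bar\mu^{-1}}}$.

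To conclude the Hautus verification, take any $\lambda\in\sigma(\Lcal)$ with $\Re\lambda\ge-\delta$ and any nonzero $\phi\in\ker(\bar\lambda I-\Lcal^*)$. Biorthogonality writes $\phi=\sum_{\lambda_k=\lambda}c_k\psi_k^*$ with some $c_k\neq 0$, and then $(\B^*\phi)_j=c_j$ for $j$ in the corresponding index set, so $\B^*\phi\ne 0$. Equivalently, using the Riesz spectral projector to restrict to the unstable $m$-dimensional subspace, the pair $(\Lcal_u,\B_u)$ on $\C^m$ satisfies the classical Hautus rank condition since the columns $\B_j=\psi_j$ form a basis of the unstable eigenspace; standard infinite-dimensional LQR theory then lifts a finite-dimensional stabilizing gain to a bounded feedback that places the full closed-loop spectrum of $\Lcal+\B K$ in $\{\Re z<-\delta\}$. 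The main technical subtlety is the possible non-self-adjointness of $\Lcal$ (as $\K$ need not be symmetric) and the attendant possibility of Jordan structure in the unstable part; this is absorbed cleanly by working with the Riesz projector and the biorthogonal dual system rather than pointwise eigenfunction identities, since only the span of the $\psi_j^*$'s (not their individual structure) enters the argument above.
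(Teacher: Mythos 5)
Your proposal is correct and follows essentially the same route as the paper: the shape functions are chosen so that $\B_j=\nabla\cdot(\bar\mu\nabla\alpha_j)=\psi_j$, and the Hautus condition $\ker(\bar\lambda I-\Lcal^*)\cap\ker(\B^*)=\{0\}$ on the finitely many modes with $\Re\lambda\ge-\delta$ then falls out of the biorthogonality $\langle\psi_j,\psi_k^*\rangle_{L^2_{\bar\mu^{-1}}}=\delta_{jk}$. The only (minor) divergence is that you establish solvability of the weighted Poisson problem directly via Lax--Milgram and the zero-mean Fredholm condition on $\T^d$, whereas the paper passes through the ground-state transform and the self-adjointness of $\Hcal_{\mathrm{loc}}$ with kernel $\operatorname{span}\{\sqrt{\bar\mu}\}$ --- these are equivalent, and your explicit handling of possible Jordan structure via the Riesz projector is if anything slightly more careful than the paper's.
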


\begin{proof}
    By \Cref{lem:spectrum_finite}, there are finitely many eigenvalues of $\Lcal$ in the half-plane $\Re\lambda\ge -\delta$, namely $\{\lambda_1,\dots,\lambda_m\}$.
    Set $\beta_j = \unitary^{-1} \alpha_j$.
    \RevAdd{Applying} the ground-state transform and rewriting in Schrödinger form, equation~\eqref{eq:B_definition} becomes
    \begin{equation}
        \label{eq:B_definition_equiv}
        \Hcal_{\mathrm{loc}} \beta_j = -\sigma \unitary \psi_j = -\sigma \varphi_j,
    \end{equation}
    where $\varphi_j$ denotes the corresponding eigenfunction of the operator $\Hcal = \Hcal_{\mathrm{loc}} + \mathcal{K}$.
    On the torus, $\Hcal_{\rm loc}$ is self-adjoint on $D(\Hcal_{\rm loc}) = H^2(\Omega)$ and has closed range. 
    By the Fredholm alternative, ~\eqref{eq:B_definition_equiv} is solvable iff the right-hand side is orthogonal to the kernel of $\Hcal_{\mathrm{loc}}^\dagger = \Hcal_{\mathrm{loc}}$. 
    Under this condition, there exists $\beta_j \in W^{2,p'}(\Omega)$ for any $p' > 0$.
    In particular, $\beta_j \in W^{1,\infty}(\Omega) \cap W^{2,p}(\Omega)$~\cite[Sec. 6.2]{evans2022partial}.
    By \Cref{lem:L0-spectrum}, we know that
    \[
    \ker(\Hcal_{\mathrm{loc}}^\dagger) = \ker(\Hcal_{\mathrm{loc}}) = \operatorname{span}\{ \sqrt{\bar\mu} \}.
    \]
    Therefore, the equation~\eqref{eq:B_definition_equiv} is solvable if and only if \RevAdd{$\varphi_j \perp \operatorname{span}\{\sqrt{\bar\mu}\}$}, or equivalently, \RevAdd{$\psi_j \perp \operatorname{span}\{\bar\mu\}$}, which is true by \Cref{remark:kernel_L0}.
    Additionally, the solution is unique up to a constant, which we do not define since we are only interested in its derivative.

    Let $\{(\lambda_i,\phi_i)\}_{i=1}^m$ be the corresponding eigenpairs of \(\Lcal^*\), chosen so that 
    \[
    \langle\psi_j,\phi_i\rangle_{\bar\mu^{-1}} = \delta_{ij}.
    \]
    Then, we compute
    \[
    \B^* \phi_i = \Bigl(\langle \psi_1, \phi_i \rangle_{\bar\mu^{-1}}, \cdots, \langle \psi_m, \phi_i \rangle_{\bar\mu^{-1}}\Bigr) = e_i \neq 0,
    \]
    where $\{e_i\}_{i=1}^m \subseteq \R^m$ is the canonical basis.
    So, no nonzero vector in 
    \[ 
    \bigoplus_{\Re(\lambda) \ge -\delta} \operatorname{ker}(\lambda I - \Lcal^*)
    \]
    lies in $\operatorname{ker}(\B^*)$. 
    This is exactly the infinite-dimensional Hautus condition.
    Since we assumed $\Mcal$ to be positive definite, the pair $(\Lcal, \Mcal)$ is trivially detectable, and the proof follows from \RevAdd{\cite[Thm. 5.2.11]{curtain2012introduction}}.
\end{proof}

\subsection{Algebraic Riccati equation and feedback law}
\label{subsec:ARE}

We now derive the Riccati equation whose solution yields the optimal control for problem~\eqref{eq:cost_transformed}-\eqref{eq:equation_transformed}.
Recall that $\Lcal_0 = \A_0+\D_{0,W}$ acts on the zero-mean space $\Space_0$, and $\B_0$ is the control operator on $\Space_0$.  
We consider two equivalent algebraic Riccati equations:
\begin{align}
  &(\Lcal_0^* + \delta I) \Pi_0  + \Pi_0 (\Lcal_0 + \delta I) - \Pi_0 \B_0 \B_0^* \Pi_0 + \Mcal = 0, \quad \Pi_0 = \Pi_0^* \in \mathbb{L}(\Space_0),
  \tag{$R_0$}\label{eq:riccati_reduced} \\[6pt]
  &(\Pcal^*\Lcal^* + \delta \Pcal^*) \Pi + \Pi(\Lcal\Pcal + \delta \Pcal) - \Pi \B\B^* \Pi +\Pcal^* \M \Pcal = 0, \quad \Pi = \Pi^*  \in \mathbb{L}(\Space),
  \tag{$R$}\label{eq:riccati_full}
\end{align}
where equation~\eqref{eq:riccati_reduced} is posed directly on the zero-mean subspace, while \eqref{eq:riccati_full} is its extension to the full space.

A standard decomposition argument shows that any solution $\Pi$ of~\eqref{eq:riccati_full} restricts to a solution of~\eqref{eq:riccati_reduced} via $\Pi_0 = I_{\Pcal}^* \Pi I_{\Pcal}$ and conversely any solution $\Pi_0$ of~\eqref{eq:riccati_reduced} lifts to a family of solutions of \eqref{eq:riccati_full} by \RevAdd{$\Pi = \Pcal^*\Pi_0 \Pcal + \gamma \Qcal$}, for all $\gamma \in \R$.
In either case, the optimal feedback
\[
v(t) = -\B^* \Pi z(t) \quad \Longrightarrow \quad u(t)=e^{-\delta t}v(t) = -\B^* \Pi y(t)
\]
is the same since \RevAdd{$\B^* \Qcal = 0$}.
For more details, we refer to~\cite[Sec. 4.3]{breiten2018control}.

We conclude by stating the connection between the Riccati equation and the optimal control problem~\eqref{eq:cost_transformed}-\eqref{eq:equation_transformed}, which is guaranteed by \Cref{thm:hautus} coupled with \RevAdd{\cite[Thm. 5.2.11]{curtain2012introduction}}.

\begin{proposition}[Algebraic Riccati Equation]\label{prop:ARE}
  Under \Cref{thm:hautus}'s hypotheses, there exists a unique self-adjoint, nonnegative $\Pi \in \mathbb{L}(\Space_0)$ solving \eqref{eq:riccati_reduced}.
  The resulting feedback law
  \[
  v(t) = -\B^* \Pi z(t), \quad u(t) = e^{-\delta t}v(t) = -\B^* \Pi y(t).
  \]
  is optimal, and the operator $\Lcal_{\Pi} \coloneqq \Lcal + \delta I - \B \B^* \Pi$ generates an exponentially stable semigroup on $\Space_0$.
\end{proposition}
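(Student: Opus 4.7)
The plan is to reduce the proposition to a direct application of the standard infinite-dimensional LQR result \cite{curtain2012introduction}[Theorem 5.2.11] applied to the shifted, projected linear system \eqref{eq:cost_transformed}--\eqref{eq:equation_transformed} posed on the Hilbert space $\Space_0$. The three hypotheses I will check are: (i) $\Lcal+\delta I$ generates a $C_0$-semigroup on $\Space_0$; (ii) the pair $(\Lcal+\delta I,\B)$ is exponentially stabilizable; (iii) the pair $(\Lcal+\delta I,\Mcal^{1/2})$ is exponentially detectable. With these in place the cited theorem yields the existence and uniqueness of $\Pi$, the optimal feedback for the transformed problem in the form $v(t)=-\B^*\Pi\,z(t)$, and exponential stability of the closed-loop generator $\Lcal+\delta I-\B\B^*\Pi$.

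For (i), I would invoke the unitary equivalence $\unitary\Lcal\,\unitary^{-1}=-\Hcal$ from \autoref{prop:spectrum_relation}. Since $\Hcal_{\mathrm{loc}}$ is self-adjoint and bounded below with compact resolvent (used in the proof of \autoref{prop:pure-spectrum}) and $\K$ is bounded (\autoref{lem:K-bounded}), the operator $-\Hcal$ generates an analytic semigroup on $L^2(\Omega)$; transporting back through $\unitary^{-1}$ and restricting to $\Space_0$ gives the claim, which is preserved by the bounded shift $\delta I$. For (ii), \autoref{thm:hautus} supplies the Hautus condition on the finitely many eigenvalues of $\Lcal$ with $\Re\lambda\ge-\delta$ (their finite count is \autoref{lem:spectrum_finite}), while the complementary spectral subspace is already exponentially stable at a rate strictly less than $-\delta$; this is exactly the known equivalence between the Hautus test and exponential stabilizability for analytic-semigroup systems with a finite unstable part. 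For (iii), the standing assumption that $\Mcal$ is self-adjoint and strictly positive gives $\Mcal\succeq\varepsilon I$ for some $\varepsilon>0$, so $\Mcal^{1/2}$ is boundedly invertible and exponential detectability of $(\Lcal+\delta I,\Mcal^{1/2})$ holds trivially with observer gain $K=0$.

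Granted (i)-(iii), the cited LQR theorem produces a unique nonnegative self-adjoint $\Pi\in\mathbb{L}(\Space_0)$ solving the Riccati equation stated in the proposition, and certifies that $v(t)=-\B^*\Pi\,z(t)$ is the optimal feedback for \eqref{eq:cost_transformed}--\eqref{eq:equation_transformed}. Undoing the exponential weighting $z(t)=e^{\delta t}y(t)$, $v(t)=e^{\delta t}u(t)$ then yields $u(t)=-\B^*\Pi\,y(t)$ as the feedback for the original (non-shifted) linearized dynamics, and identifies $\Lcal_\Pi=\Lcal+\delta I-\B\B^*\Pi$ as the generator whose exponential stability is guaranteed by the LQR theorem.

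The main obstacle lies entirely in the stabilizability step, and this has already been prepared by \autoref{thm:hautus}. A subtle point deserving care is that the ``Hautus implies exponential stabilizability'' implication in infinite dimensions typically requires a spectral decomposition into a finite-dimensional unstable invariant subspace, where one stabilizes by finite-dimensional pole placement, and a complementary part on which the semigroup is already at least $\delta$-stable; both ingredients are available here via the discrete compact-resolvent spectrum of $\Hcal$ from \autoref{prop:pure-spectrum} and the finite-cardinality statement in \autoref{lem:spectrum_finite}. Modulo this standard packaging, the proposition follows by citation.
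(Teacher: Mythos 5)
Your proposal is correct and follows essentially the same route as the paper, which obtains the proposition directly from \autoref{thm:hautus} (Hautus stabilizability), the trivial detectability coming from the positive definiteness of $\Mcal$, and an application of \cite{curtain2012introduction}[Theorem 5.2.11]. The additional detail you supply on semigroup generation via the ground-state transform and on the finite-dimensional unstable subspace decomposition is a faithful unpacking of what the paper leaves implicit.
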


\subsection{Local exponential stabilization}
\label{subsec:local}

After computing the optimal control in feedback form $v(t) = -\B^* \Pi z(t)$ for the linearized model, we insert it into the full nonlinear dynamics to obtain the closed-loop equation
\[
\dot{z} = \Lcal_{\Pi}z - (\B^* \Pi z)\Nop^{\delta}(t) z  + \W^{\delta}(t)(z),
\]
where $\Nop^{\delta}(t)z = (\Nop_1^{\delta}(t)z, \cdots,\Nop_m^{\delta}(t)z)$, $\Nop_j^{\delta}(t) = e^{-\delta t}\Nop_j$ and $\W^{\delta}(t) = e^{-2\delta t}\W$.
By doing the change of variables $\psi = \unitary z$, we obtain the equation in $L^2(\Omega)$:
\begin{equation}\label{eq:transformed_mckean_vlasov}
    \dot\psi = -\Hcal_{\Pi}\psi - (\hat{\B}^* \hat{\Pi}\psi)\hat{\Nop}^{\delta}(t) \psi + \hat{\W}^{\delta}(\psi),
\end{equation}
where
\[
\Hcal_{\Pi} = \Hcal - \delta I + \hat\B \hat\B^{*} \hat\Pi, \quad \hat\B=\unitary \B, \quad \hat\Pi=\unitary\,\Pi\,\unitary^{-1},
\]
\[
\hat\Nop^{\delta}_j(t) = \unitary \Nop^{\delta}_j\,\unitary^{-1}, \qquad \hat\W^{\delta} = \unitary\,\W^{\delta}\,\unitary^{-1}.
\]
By \Cref{prop:ARE} and \Cref{lem:K-bounded}, both $\hat\B \hat\B^{*} \hat\Pi$ and the integral perturbation $\K$ are bounded on $L^2(\Omega)$.  
Hence
\begin{equation}
  \label{eq:Hpi-domain}
  D(\Hcal_{\Pi}) = D(\Hcal_{\mathrm{loc}})\,\cap\,L^2_0(\Omega), \quad D(\Hcal_{\Pi}^*) = D(\Hcal_{\mathrm{loc}}^*)\,\cap\,L^2_0(\Omega).
\end{equation}

Denote $H_{\sharp}(\Omega) \coloneqq  H^1(\Omega)\cap L^2_0(\Omega)$ and notice that $[H_{\sharp}(\Omega)]^* \cong \{F\in [H^1(\Omega)]^*: \langle F,1\rangle_{[H^{1}(\Omega)]^*,H^{1}(\Omega)}=0\}$.
By standard real-interpolation theory for sectorial operators, taking $\lambda \in \rho(\Hcal_{\mathrm{loc}})$ and setting $\Hcal_{\mathrm{loc},\lambda} \coloneqq \lambda I - \Hcal_{\mathrm{loc}}$, 
\[
[D(\Hcal_{\mathrm{loc},\lambda})\cap L^2_0(\Omega), L^2_0(\Omega)]_{1/2} = H^1_{\sharp}(\Omega) = [D(\Hcal_{\mathrm{loc}, \lambda}^*)\cap L^2_0(\Omega), L^2_0(\Omega)]_{1/2},
\]
since the fractional powers of $\Hcal_{\mathrm{loc},\lambda}$ are well-defined.
Moreover, by~\cite[Vol. 1, Sec. 12]{lions2012non}, we have
\[
[[D(\Hcal_{\lambda}), L^2_0(\Omega)]_{1/2}, [D(\Hcal_{\lambda}^*), L^2_0(\Omega)]_{1/2}^*]_{1/2} = L^2_0(\Omega).
\]
Defining the evolution space
\begin{equation}\label{e:evol_space}
W(0, \infty; \Omega) \coloneqq L^2\bigl(0,\infty; H^1_{\sharp}(\Omega)\bigr) \cap H^1\bigl(0,\infty; [H^1_{\sharp}(\Omega)]^*\bigr),
\end{equation}
equipped with the norm
\[
\|y\|_{W(0, \infty; \Omega)} \coloneqq \sqrt{\int_{0}^{\infty} \|y(t)\|_{H^1(\Omega)}^2
    +\|\partial_t y(t)\|_{[H^1(\Omega)]^*}^2
    \,dt},
\]
one obtains the following maximal-regularity result.

\begin{theorem}[Maximal regularity]
    \label{thm:max-reg-Hpi}
    Let $f \in L^2(0,\infty; [H_{\sharp}(\Omega)]^*)$ and $\psi_0 \in L^2_0(\Omega)$.
    Then the Cauchy problem
    \[
    \begin{cases}
      \dot\psi(t) + \Hcal_{\Pi} \, \psi(t) = f(t), & t>0,\\
      \psi(0)=\psi_0,
    \end{cases}
    \]
    admits a unique mild solution $\psi \in W(0, \infty; \Omega)$ satisfying
    \[
    \|\psi\|_{W(0, \infty; \Omega)} \le C\Bigl(\|f\|_{L^2(0,\infty;[H^1(\Omega)]^*)} + \|\psi_0\|_2\Bigr)
    \]
    \RevAdd{for some constant $C>0$}.
    In particular, $\psi \in C_b\bigl([0,\infty);L^2_0(\Omega)\bigr)$.
\end{theorem}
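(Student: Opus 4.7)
The plan is to reduce the statement to Lions' maximal regularity theorem for a coercive sesquilinear form, exploiting the analyticity and exponential stability of the semigroup generated by $-\Hcal_{\Pi}$ together with the interpolation identities recalled just above the statement. Since we have already identified $V = H^1(\Omega)\cap L^2_0(\Omega)$ as the $1/2$-interpolation space between $D(\Hcal_\Pi)$ and $H = L^2_0(\Omega)$ (and the same for $D(\Hcal_\Pi^*)$), the abstract Lions framework with the triple $V \hookrightarrow H \hookrightarrow V'$ is the natural target.

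First, I would establish that $-\Hcal_{\Pi}$ generates an analytic, exponentially stable $C_0$-semigroup on $L^2_0(\Omega)$. The local part $\Hcal_{\mathrm{loc}} = -\sigma\Delta + \Psi$ is self-adjoint and bounded below on $L^2(\Omega)$, hence sectorial. The operators $\K$ (\autoref{lem:K-bounded}) and $\hat\B\hat\B^*\hat\Pi$ are bounded on $L^2(\Omega)$, so $\Hcal_{\Pi} = \Hcal_{\mathrm{loc}} + \K - \delta I + \hat\B\hat\B^*\hat\Pi$ is a bounded perturbation of a sectorial operator and therefore itself sectorial, generating an analytic semigroup on $L^2(\Omega)$. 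Compatibility with the projection $\Pcal_H$ onto $L^2_0(\Omega)$ follows because $\Pcal_H$ commutes with $\Hcal$ and with the feedback term. Finally, the unitary intertwining $-\Hcal_\Pi = \unitary \Lcal_\Pi \unitary^{-1}$ and \autoref{prop:ARE} imply that the semigroup is exponentially stable, so there exists $\omega>0$ with $\sigma(-\Hcal_\Pi) \subset \{\Re\lambda < -\omega\}$.

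Next, I would read off the abstract problem on the Gelfand triple $V \hookrightarrow H \hookrightarrow V'$. The interpolation identity extends $\Hcal_\Pi$ to a bounded operator $V \to V'$, and the associated sesquilinear form
\[
a(\psi,\varphi) = \langle \Hcal_{\Pi}\psi,\varphi\rangle_{V',V}
\]
is continuous on $V \times V$. Continuity of $a$ combined with the sectoriality and spectral bound $\Re\sigma(\Hcal_\Pi) > \omega > 0$ yields the Gårding-type coercivity $\Re a(\psi,\psi) + \lambda_0 \|\psi\|_H^2 \ge c \|\psi\|_V^2$ for some $\lambda_0, c > 0$, which is exactly the hypothesis of Lions' theorem (see \cite{curtain2012introduction}). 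That theorem delivers a unique
\[
\psi \in L^2(0,\infty; V) \cap H^1(0,\infty; V') = W(Q_\infty)
\]
solving the Cauchy problem, together with the quantitative bound
\[
\|\psi\|_{W(Q_\infty)} \le C \bigl( \|f\|_{L^2(0,\infty;V')} + \|\psi_0\|_H \bigr).
\]
The continuity statement $\psi\in C_b([0,\infty); L^2_0(\Omega))$ follows from the classical embedding $W(Q_\infty) \hookrightarrow C([0,\infty); H)$ via the identity $\tfrac{d}{dt}\|\psi\|_H^2 = 2\Re\langle\psi',\psi\rangle_{V',V}$, with boundedness in time secured by the exponential stability of the generator (absorb the $\lambda_0$-shift using the negative spectral gap).

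The main obstacle I anticipate is verifying coercivity of $a$: the feedback perturbation $\hat\B\hat\B^*\hat\Pi$ is bounded on $H$ but not self-adjoint and could in principle spoil a direct Gårding estimate. The remedy is to use the spectral gap from \autoref{prop:ARE} to ensure numerical range of $\Hcal_\Pi$ lies in a sector strictly in the right half-plane, absorb the non-self-adjoint bounded perturbation into $-\sigma\Delta + \Psi$ via the KLMN / Kato perturbation principle for sesquilinear forms, and then close the estimate on $V$. Once coercivity is in hand, the remainder is a direct application of Lions' theorem and the Lions-Magenes embedding.
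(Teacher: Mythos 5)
Your proposal follows essentially the same route as the paper: decompose $\Hcal_{\Pi}$ into the self-adjoint sectorial part $\Hcal_{\mathrm{loc},\lambda}$ plus the bounded perturbation $-(\delta I-\hat\B\hat\B^*\hat\Pi+\K)$, use the interpolation identity $[D(A),X]_{1/2}=H^1(\Omega)\cap L^2_0(\Omega)=[D(A^*),X]_{1/2}$ to set up the Gelfand triple, and invoke the Lions-type maximal-$L^2$-regularity theorem, with exponential stability of the closed-loop semigroup (from \autoref{prop:ARE}) handling the infinite horizon. One small caution: the step ``sectoriality plus the spectral bound $\Re\sigma(\Hcal_\Pi)>\omega$ yields G\aa rding coercivity'' is not a valid implication for non-normal operators (the spectrum does not control the numerical range); but this is harmless here, since the G\aa rding inequality follows directly from $\Re\,a(\psi,\psi)\ge\sigma\|\nabla\psi\|_{L^2}^2-\bigl(\|\Psi\|_{L^\infty}+\|\K\|+\delta+\|\hat\B\hat\B^*\hat\Pi\|\bigr)\|\psi\|_{L^2}^2$, exactly as you describe in your final paragraph.
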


\begin{proof}
  Write 
  \[
    \Hcal_{\Pi} = A + B, \quad A \coloneqq \Hcal_{\mathrm{loc},\lambda}, \quad B \coloneqq -\bigl(\delta I-\hat\B\hat\B^*\hat\Pi+\K\bigr),
  \]
  acting on $L^2_0(\Omega)$.  
  By \Cref{prop:pure-spectrum}, $A$ is sectorial with angle $0$, has compact resolvent and $D(A) = D(\Hcal_{\mathrm{loc}}) \cap L^2_0(\Omega)$.  
  Since $B \in \mathbb{L}(L^2_0(\Omega))$, and by \eqref{eq:Hpi-domain} we have the interpolation identity
  \[
  [D(A),L^2_0(\Omega)]_{1/2} = H_{\sharp}^1(\Omega) =[D(A^*),L^2_0(\Omega)]_{1/2},
  \]
  so that the operator $A+B$ meets the hypotheses of the maximal-$L^2$-regularity theorem (\cite[Ch. 3, Thm. 2.2]{bensoussan2007representation}; see also \cite[I.4.2]{lions2012non}).
  The conclusion follows immediately.
\end{proof}

The closed-loop evolution in equation~\eqref{eq:transformed_mckean_vlasov} contains two nonlinear (quadratic-type) perturbations:
\[
  F_1(\psi)(t) \coloneqq \bigl(\hat\B^*\hat\Pi\psi(t)\bigr)\hat\Nop^{\delta}(t)\psi(t),
\qquad
  F_2(\psi)(t) \coloneqq \hat\W^{\delta}\bigl(\psi(t)\bigr).
\]
In order to run the contraction argument in $W(0, \infty; \Omega)$, we estimate each one separately.
The next lemma bounds the {\em feedback term} $F_1$; its proof is essentially identical to \cite[Lem.~4.7]{breiten2018control}, so we only state it here.

\begin{lemma}[Feedback-term estimate]
  \label{lem:feedback-nonlinear}
  There exists $\widetilde C_1>0$ such that for all $y, z\in W(0, \infty; \Omega)$,
  \[
    \|F_1(y) - F_1(z)\|_{L^2(0,\infty;\,[H^1(\Omega)]^*)} \le \widetilde C_1 (\|y\|_{W(0, \infty; \Omega)} + \|z\|_{W(0, \infty; \Omega)}) \|y - z\|_{W(0, \infty; \Omega)}.
  \]
\end{lemma}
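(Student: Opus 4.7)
The plan is to exploit the bilinear structure of $F_1$ combined with the continuous embedding $W(Q_\infty) \hookrightarrow C_b\bigl([0,\infty); L^2_0(\Omega)\bigr)$, which is built into the interpolation identities employed in \autoref{thm:max-reg-Hpi}. First I would use the Lipschitz-type identity
\[
F_1(y_p)(t) - F_1(z_p)(t) = \bigl(\hat\B^*\hat\Pi (y_p - z_p)(t)\bigr)\,\hat\Nop^{\delta}(t) y_p(t) + \bigl(\hat\B^*\hat\Pi z_p(t)\bigr)\,\hat\Nop^{\delta}(t)(y_p - z_p)(t),
\]
which reduces the estimate to two symmetric terms, each linear in the difference $y_p - z_p$.

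Next I would establish two mapping properties for the coefficient operators. The map $\hat\B^*\hat\Pi \colon L^2_0(\Omega) \to \R^m$ is bounded because $\hat\Pi \in \mathbb{L}(L^2_0(\Omega))$ by \autoref{prop:ARE} and $\hat\B = \unitary\B$ is itself bounded from $\R^m$ into $L^2_0(\Omega)$. More substantively, each $\hat\Nop_j$ extends to a bounded operator from $L^2_0(\Omega)$ into $[H^1(\Omega) \cap L^2_0(\Omega)]'$: a direct computation using $\unitary\phi = \phi/\sqrt{\bar\mu}$ gives
\[
\hat\Nop_j \phi = \nabla\phi \cdot \nabla\alpha_j + \bigl(\tfrac{1}{2}\nabla\log\bar\mu \cdot \nabla\alpha_j + \Delta\alpha_j\bigr)\phi,
\]
and testing against $\varphi \in H^1(\Omega) \cap L^2_0(\Omega)$ with periodic integration by parts applied to the first-order term yields $|\langle \hat\Nop_j \phi, \varphi\rangle| \le C_j \|\phi\|_{L^2}\|\varphi\|_{H^1}$ provided $\nabla\alpha_j \in L^\infty(\Omega)$. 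This regularity follows from elliptic theory applied to \eqref{eq:B_definition} combined with Sobolev embedding on $\T^d$, exactly as already exploited in the proof of \autoref{thm:hautus}.

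Combining the two bounds pointwise in $t$, using the embedding $\|\phi(t)\|_{L^2} \le C\|\phi\|_{W(Q_\infty)}$ on both the non-difference factor of each term and on the difference itself, and pulling out the exponential weight $e^{-\delta t}$ from $\hat\Nop^\delta$, I would obtain
\[
\|F_1(y_p)(t) - F_1(z_p)(t)\|_{[H^1(\Omega)]'} \le C e^{-\delta t}\bigl(\|y_p\|_{W(Q_\infty)} + \|z_p\|_{W(Q_\infty)}\bigr)\|y_p - z_p\|_{W(Q_\infty)}.
\]
Squaring, integrating in $t$ over $[0,\infty)$, and using $\int_0^\infty e^{-2\delta t}\,dt = (2\delta)^{-1}$ yields the claim with $\widetilde C_1 = C/\sqrt{2\delta}$.

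The main obstacle I anticipate is the rigorous verification of the $L^2 \to [H^1]'$-boundedness of $\hat\Nop_j$: this is a first-order differential operator, and the divergence-form integration by parts requires $\nabla\alpha_j \in L^\infty$, forcing a regularity bootstrap of $\alpha_j$ to $W^{2,p}$ for $p$ large before invoking Sobolev embedding on $\T^d$. Once this mapping property is secured, the remaining steps are a routine application of Hölder's inequality together with the Bochner-in-time embedding provided by maximal regularity.
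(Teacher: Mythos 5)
The paper does not actually prove this lemma; it defers entirely to \cite[Lemma~4.7]{breiten2018control}, and your argument is essentially the proof that reference gives: the bilinear splitting of $F_1(y_p)-F_1(z_p)$, boundedness of $\hat\B^*\hat\Pi$ on $L^2_0(\Omega)$, the $L^2\to[H^1]'$ mapping property of $\hat\Nop_j$ obtained by integrating the first-order term by parts, and the embeddings $W(Q_\infty)\hookrightarrow C_b([0,\infty);L^2_0(\Omega))\cap L^2(0,\infty;L^2(\Omega))$, so your proposal is correct and matches the intended route. One small imprecision: the zeroth-order coefficient $\tfrac12\nabla\log\bar\mu\cdot\nabla\alpha_j+\Delta\alpha_j$ is not controlled by $\nabla\alpha_j\in L^\infty$ alone, since elliptic regularity for \eqref{eq:B_definition} only gives $\Delta\alpha_j\in L^p$ for finite $p$; the term $\int\phi\,\varphi\,\Delta\alpha_j$ should instead be handled by H\"older together with the Sobolev embedding $H^1(\T^d)\hookrightarrow L^q$ for a suitable $q>2$, which still yields the required $C\|\phi\|_{L^2}\|\varphi\|_{H^1}$ bound.
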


We now propose an estimate for $\hat{\W}^{\delta}$ of the same type.

\begin{lemma}[Nonlinear remainder estimate]\label{lem:remainder}
There exists \RevAdd{$\widetilde{C}_2 > 0$} such that for all $y, z\in W(0, \infty; \Omega)$,
\[
\|F_2(y)-F_2(z)\|_{L^2(0,\infty;[H^1(\Omega)]^*)} \le \RevAdd{\widetilde{C}_2}(\|y\|_{W(0, \infty; \Omega)} + \|z\|_{W(0, \infty; \Omega)}) \|y-z\|_{W(0, \infty; \Omega)}.
\]
\end{lemma}

\begin{proof}
  Set $u=\unitary^{-1}y$, $v=\unitary^{-1}z$.  Then
  \[
    \hat\W^{\delta}(y)-\hat\W^{\delta}(z) = e^{-2\delta t} \unitary\Bigl[\nabla \cdot \bigl(u \nabla W * (u-v)\bigr) +\nabla \cdot \bigl((u-v)\nabla W * v\bigr)\Bigr].
  \]
  Since $e^{-2\delta t} \le 1$, 
  \begin{align*}
    \|\hat{\W}^{\delta}(y) - \hat{\W}^{\delta}(z) \|_{L^2(0,\infty; [H^1(\Omega)]^*)} \le \; \|\unitary &\nabla \cdot (u \nabla W * (u - v))\|_{L^2(0,\infty; [H^1(\Omega)]^*)} \\[1ex]
    &+ \|\unitary \nabla \cdot ((u-v) \nabla W * v) \|_{L^2(0,\infty; [H^1(\Omega)]^*)}
  \end{align*}

  Consider the first term.
  For a fixed $t$ and any $\varphi \in H^1(\Omega)$ such that $\|\varphi\|_{H^1}=1$,
  \begin{align*}
    \bigl\langle\unitary\nabla \cdot(u\nabla W * (u-v)), \varphi\bigr\rangle &= \bigl\langle\nabla \cdot(u\nabla W * (u-v)), \unitary^*\varphi \bigr\rangle_{\bar\mu^{-1}} \\
    &= \int_\Omega \nabla \cdot\bigl(u\nabla W * (u-v)\bigr) \sqrt{\bar\mu} \frac{\varphi}{\bar\mu} \, dx \\
    &= -\int_\Omega u \nabla W * (u-v) \cdot \nabla \Bigl(\frac{\varphi}{\sqrt{\bar\mu}} \Bigr) dx \\
    &= -\int_\Omega u \nabla W*(u-v)\cdot \Bigl[\nabla\varphi +\nabla \bigl(\tfrac{V+W*\bar\mu}{2\sigma}\bigr) \varphi\Bigr] \frac{1}{\sqrt{\bar\mu}} \, dx \\
    &= -\int_\Omega \frac{u}{\sqrt{\bar\mu}} \frac{\nabla W*(u-v)}{\sqrt{\bar\mu}}\cdot \sqrt{\bar\mu}\Bigl[\nabla\varphi +\nabla \bigl(\tfrac{V+W*\bar\mu}{2\sigma}\bigr) \varphi\Bigr] \, dx.
  \end{align*}
  Hence, by Cauchy-Schwarz,
  \[
    \bigl\|\unitary\nabla \cdot(u\nabla W * (u-v))\bigr\|_{[H^1(\Omega)]^*} \le \left\|y \tfrac{\nabla W * (u-v)}{\sqrt{\bar\mu}}\right\|_{2}
    \,\left\|\sqrt{\bar\mu}\nabla\varphi
     +\sqrt{\bar\mu}\nabla\!\bigl(\tfrac{V+W*\bar\mu}{2\sigma}\bigr)\,\varphi
    \right\|_2.
  \]
  \RevAdd{Since $\bar\mu$ is continuous and strictly positive on the compact domain $\Omega$, it is bounded above and below.}
  Therefore,
  \[
  \left\|\sqrt{\bar\mu}\nabla\varphi + \sqrt{\bar\mu}\nabla \bigl(\tfrac{V+W*\bar\mu}{2\sigma}\bigr)\varphi \right\|_2 \le C_1\|\varphi\|_{H^1(\Omega)} = C_1,
  \]
  for some constant $C_1 > 0$, and, by Young's inequality,
  \begin{equation*}
      \begin{split}
            \left\|y \tfrac{\nabla W*(u-v)}{\sqrt{\bar\mu}} \right\|_2 &\le \|y\|_2 \left\|\tfrac{\nabla W*(u-v)}{\sqrt{\bar\mu}}\right\|_{\infty} \\
            &\le \|y\|_2 \left\|\tfrac{1}{\sqrt{\bar\mu}}\right\|_{\infty} \bigl\|\nabla W\bigr\|_{\infty} \bigl\|u-v\bigr\|_{1} \\
            &\le C_W \|y\|_2 \left\|\tfrac{1}{\sqrt{\bar\mu}}\right\|_{\infty} \bigl\|y-z\bigr\|_2,
      \end{split}
  \end{equation*}
  where \RevAdd{$C_W := \|W\|_{W^{2,\infty}(\Omega)}$, and} we use Cauchy-Schwarz to obtain 
  \[
  \|u-v\|_{1} = \int_{\Omega} |\sqrt{\bar\mu}(y-z)| \le \|y-z\|_2.
  \]
  \RevAdd{Since $\bar\mu$ is bounded below, $\|1/\sqrt{\bar\mu}\|_{\infty} < \infty$, and there exists a constant $C_* > 0$ such that}
  \[
  \|\unitary\nabla \cdot(u \nabla W * (u-v)) \|_{[H^1(\Omega)]^*} \le \RevAdd{C_*}\|y\|_2\|y-z\|_2.
  \]
  Integrating in time gives
  \[
  \|\unitary\nabla \cdot(u\nabla W*(u-v))\|_{L^2(0, \infty; [H^1(\Omega)]^*)} \le \RevAdd{C_*}\|y\|_{L^\infty(0,\infty; L^2(\Omega))} \|y-z\|_{L^2(0,\infty; L^2(\Omega))}.
  \]
    For the second term, we repeat the previous estimate with the substitutions $u' \coloneqq  u-v$, $v' \coloneqq  u-2v$.
    Because $u'-v'=v$, the expression $\nabla \cdot((u-v) \nabla W * v)$ is the same as $\nabla \cdot ( u' \nabla W * (u'-v'))$, \RevAdd{so} applying the bound proved for
    the first term with \((u',v')\) in place of \((u,v)\) therefore yields
    \[
    \|\unitary\nabla \cdot((u-v)\nabla W * v)\|_{L^2(0,\infty; [H^1(\Omega)]^*)} \le \RevAdd{C_*} \|z\|_{L^\infty(0, \infty; L^2(\Omega))}\|y-z\|_{L^2(0, \infty; L^2(\Omega))}.
    \]

  The continuous embeddings 
  \[
  W(0, \infty; \Omega) \hookrightarrow C_b([0,\infty);L^2(\Omega)) \text{ and } W(0, \infty; \Omega) \hookrightarrow L^2(0,\infty;L^2(\Omega))
  \] 
  from~\cite[Vol. I, Thm. 4.2]{lions2012non} imply the existence of a constant $C' > 0$ such that 
  \[
  \|y\|_{L^\infty(0, \infty; L^2(\Omega))} \le C' \|y\|_{W(0,\infty; \Omega)}, \quad \|y-z\|_{L^2(0, \infty; L^2(\Omega))} \le C' \|y-z\|_{W(0,\infty; \Omega)}
  \]
  so that the lemma follows after \RevAdd{setting $\widetilde{C}_2 \coloneqq  C_*(C')^2$}.
\end{proof}

Combining the three preceding estimates contributes to the concluding theorem.

\begin{theorem}[Existence via contraction]\label{thm:nonlin-existence}
    Let $C > 0$, $\widetilde{C}_1 > 0$, and $\widetilde{C}_2 > 0$ be the constants from \Cref{thm:max-reg-Hpi}, \Cref{lem:feedback-nonlinear}, and \Cref{lem:remainder}, respectively.  
    If the initial data $\psi_{0} \in L^{2}(\Omega)$ satisfies
    \[
    \|\psi_{0}\|_2 \le \frac{3}{16\,C^{2}\, (\widetilde{C}_1 + \widetilde{C}_2)},
    \]
    then the closed-loop equation~\eqref{eq:transformed_mckean_vlasov} with initial condition $\psi(0)=\mathcal{P}\psi_{0}$ admits a unique solution $\psi\in W(0, \infty; \Omega)$ satisfying
    \[
      \|\psi\|_{W(0, \infty; \Omega)} \le \frac{1}{4\,C\,(\widetilde{C}_1 + \widetilde{C}_2)}.
    \]
\end{theorem}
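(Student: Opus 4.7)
The plan is to prove the theorem by a Banach fixed-point argument in the closed ball
\[
B_{r}\coloneqq\{\psi\in W(Q_{\infty}):\|\psi\|_{W(Q_{\infty})}\le r\},\qquad r\coloneqq\frac{1}{4C(\widetilde C_{1}+\widetilde C_{2})}.
\]
Define the solution map $\Gamma:W(Q_{\infty})\to W(Q_{\infty})$ by setting $\tilde\psi=\Gamma(\psi)$ to be the unique solution supplied by \autoref{thm:max-reg-Hpi} of the linear Cauchy problem
\[
\dot{\tilde\psi}+\Hcal_{\Pi}\tilde\psi = -F_{1}(\psi)+F_{2}(\psi),\qquad \tilde\psi(0)=\Pcal\psi_{0}.
\]
Any fixed point of $\Gamma$ is, by construction, a solution of~\eqref{eq:transformed_mckean_vlasov}, so it suffices to verify that $\Gamma$ maps $B_{r}$ into itself and is a strict contraction there.

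First I would bound the source term using \autoref{lem:feedback-nonlinear} and \autoref{lem:remainder} taken at $z=0$; since $F_{1}(0)=F_{2}(0)=0$, this yields the quadratic estimates $\|F_{i}(\psi)\|_{L^{2}(0,\infty;[H^{1}]')}\le\widetilde C_{i}\|\psi\|_{W(Q_{\infty})}^{2}$ for $i=1,2$. Applying the maximal-regularity bound of \autoref{thm:max-reg-Hpi} to $\Gamma(\psi)$ gives
\[
\|\Gamma(\psi)\|_{W(Q_{\infty})}\le C\bigl((\widetilde C_{1}+\widetilde C_{2})\|\psi\|_{W(Q_{\infty})}^{2}+\|\psi_{0}\|_{L^{2}(\Omega)}\bigr).
\]
For $\psi\in B_{r}$ the quadratic term equals $C(\widetilde C_{1}+\widetilde C_{2})r^{2}=r/4$, so the self-mapping condition $\|\Gamma(\psi)\|_{W(Q_{\infty})}\le r$ reduces to $C\|\psi_{0}\|_{L^{2}}\le 3r/4$, which is exactly the hypothesis $\|\psi_{0}\|_{L^{2}(\Omega)}\le 3/(16C^{2}(\widetilde C_{1}+\widetilde C_{2}))$.

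Next I would establish the contraction property. For $\psi,\phi\in B_{r}$, linearity of the Cauchy problem together with \autoref{thm:max-reg-Hpi} gives
\[
\|\Gamma(\psi)-\Gamma(\phi)\|_{W(Q_{\infty})}\le C\bigl\|F_{1}(\psi)-F_{1}(\phi)\bigr\|_{L^{2}([H^{1}]')}+C\bigl\|F_{2}(\psi)-F_{2}(\phi)\bigr\|_{L^{2}([H^{1}]')},
\]
and invoking the Lipschitz estimates of Lemmas~\ref{lem:feedback-nonlinear}-\ref{lem:remainder} together with $\|\psi\|_{W(Q_{\infty})}+\|\phi\|_{W(Q_{\infty})}\le 2r$ yields
\[
\|\Gamma(\psi)-\Gamma(\phi)\|_{W(Q_{\infty})}\le 2C(\widetilde C_{1}+\widetilde C_{2})r\,\|\psi-\phi\|_{W(Q_{\infty})}=\tfrac12\|\psi-\phi\|_{W(Q_{\infty})}.
\]
Since $B_{r}$ is a closed subset of the Banach space $W(Q_{\infty})$ and $\Gamma$ is a $\tfrac12$-contraction on $B_{r}$, the Banach fixed-point theorem delivers a unique $\psi\in B_{r}$ with $\Gamma(\psi)=\psi$, which is the claimed solution.

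I do not expect a genuine obstacle here: the argument is essentially a bookkeeping exercise in choosing $r$ so that the quadratic term absorbs a quarter of the ball radius while leaving $3/4$ for the initial datum, and simultaneously so that the Lipschitz constant is $\le 1/2$. The only mildly delicate point is to ensure that $\Gamma$ is well-defined, i.e.\ that the source $-F_{1}(\psi)+F_{2}(\psi)$ lies in $L^{2}(0,\infty;[H^{1}(\Omega)\cap L^{2}_{0}]')$ for $\psi\in W(Q_{\infty})$, which is immediate from the two nonlinear estimates, and that $\Pcal\psi_{0}\in L^{2}_{0}(\Omega)$, which is automatic from the definition of the projection $\Pcal$.
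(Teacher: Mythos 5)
Your proposal is correct and follows essentially the same route as the paper: the same Banach fixed-point argument on the ball of radius $r=\tfrac{1}{4C(\widetilde C_1+\widetilde C_2)}$, with the quadratic source bound from Lemmas~\ref{lem:feedback-nonlinear} and~\ref{lem:remainder} and the maximal-regularity estimate of \autoref{thm:max-reg-Hpi} giving both the self-mapping property and the $\tfrac12$-contraction. Your bookkeeping ($r/4$ from the quadratic term plus $3r/4$ from the initial datum) is in fact slightly cleaner than the paper's, which states the intermediate bound as $2/(16C(\widetilde C_1+\widetilde C_2))$ where the correct total is $4/(16C(\widetilde C_1+\widetilde C_2))=r$.
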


\begin{proof}
    We follow the proof from \cite[Thm. 4.8]{breiten2018control} closely.
    Define the map
    \[
      \mathcal{F}: B \to B, \quad B \coloneqq \Bigl\{ w \in W(0, \infty; \Omega) : \|w\|_{W(0, \infty; \Omega)} \le \tfrac1{4C(\widetilde{C}_1 + \widetilde{C}_2)}\Bigr\},
    \]
    by letting $\psi = \mathcal{F}(w)$ be the unique mild solution of
    \[
      \partial_t \psi + \Hcal_{\Pi} \psi = -(\hat{\B}^* \hat{\Pi}w)\hat{\Nop}^{\delta}(t) w + \hat{\W}^{\delta}(w),
      \quad
      \psi(0)=\mathcal{P}\psi_{0}.
    \]
    Denote $F \coloneqq -F_1 + F_2$ so that $\partial_t \psi + \Hcal_{\Pi} = F$.
    By \Cref{lem:feedback-nonlinear} and \Cref{lem:remainder},
    \begin{equation*}
        \begin{split}
            \|F(w)\|_{L^2(0,\infty; [H^1(\Omega)]^*)} &\le \widetilde C_1 \|w\|_{W(0, \infty; \Omega)}^2 + \widetilde C_2\|w\|_{W(0, \infty; \Omega)}^2 \\
            &= (\widetilde C_1 + \widetilde C_2)\|w\|_{W(0, \infty; \Omega)}^2 \\
            &\le \frac{1}{16 C^2 (\widetilde C_1 + \widetilde C_2)}.
        \end{split}
    \end{equation*}
    Hence \Cref{thm:max-reg-Hpi} yields for the mild solution $\psi=\mathcal F(w)$, the inequality
    \[
    \|\psi\|_{W(0, \infty; \Omega)} \le C \|F(w)\|_{L^2(0,\infty;[H^1(\Omega)]^*)} + C \|\psi_0\|_2 \le \frac1{16C(\widetilde C_1+\widetilde C_2)} + C \|\psi_0\|_2.
    \]
    By the assumption of the initial data, $\|\mathcal{F}(w)\|_{W(0, \infty; \Omega)} \le 2/(16\,C(\widetilde C_1+\widetilde C_2))$.
    Thus $\mathcal F$ maps $B$ into itself.

    We next verify that $\mathcal{F}$ is a contraction.
    To check the contraction, let $w_i \in B$ and $\psi_i = \mathcal{F}(w_i)$ for $i=1,2$.
    Then,
    \[
      \partial_t\psi_1-\partial_t\psi_2 + \Hcal_{\Pi}(\psi_1-\psi_2)
      = F(w_1) - F(w_2),
      \quad (\psi_1-\psi_2)(0)=0,
    \]
    so again by \Cref{thm:max-reg-Hpi}, \Cref{lem:feedback-nonlinear}, and \Cref{lem:remainder}
    \begin{equation*}
        \begin{split}
        \|\psi_{1}-\psi_{2}\|_{W(0, \infty; \Omega)} &\le C \Bigl(
          \Big\| (\hat{\B}^* \hat{\Pi}w_2)\hat{\Nop}^{\delta}(t) w_2 -(\hat{\B}^* \hat{\Pi}w_1)\hat{\Nop}^{\delta}(t) w_1 \Big\|_{L^{2}(0,\infty;[H^{1}(\Omega)]^*)} \\&\hspace{3.7cm} + \Big\| \hat{\W}^{\delta}(w_1) - \hat{\W}^{\delta}(w_2)\Big\|_{L^{2}(0,\infty;[H^{1}(\Omega)]^*)} \Bigr) \\
          &\hspace{-4mm}\le C(\widetilde{C}_1 + \widetilde{C}_2)\bigl(\|w_1\|_{W(0, \infty; \Omega)}+\|w_2\|_{W(0, \infty; \Omega)}\bigr)\,\|w_1-w_2\|_{W(0, \infty; \Omega)} \\
          &\hspace{-4mm}\le C(\widetilde{C}_1 + \widetilde{C}_2) \frac{2}{4C(\widetilde{C}_1 + \widetilde{C}_2)}\|w_1-w_2\|_{W(0, \infty; \Omega)} \\
          &\hspace{-4mm}= \frac{1}{2}\|w_1-w_2\|_{W(0, \infty; \Omega)}.
        \end{split}   
    \end{equation*}
    Hence $\mathcal{F}$ is a strict contraction on $B$, and Banach's fixed-point theorem implies the existence of a unique solution $\psi \in B$ with $\psi=\mathcal F(\psi)$, completing the proof of local existence and decay.
\end{proof}

A direct follow-up from this theorem is the following corollary.

\begin{corollary}[Local decay]\label{thm:local-decay}
  Let $u(t) = -\B^{*}\Pi y(t)$, $y(0) = \Pcal\mu_{0}$.  
  There exist $\varepsilon>0$, $C > 0$ such that
  \[
  \|y(t)\|_{\bar\mu^{-1}} \le C e^{-\delta t},
  \qquad \forall t\ge0,
  \]
  provided $\|y(0)\|_{\bar\mu^{-1}} \le \varepsilon$.
\end{corollary}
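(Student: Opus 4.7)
The plan is to deduce the corollary directly from \autoref{thm:nonlin-existence} by unwinding the two changes of variable used to arrive at \eqref{eq:transformed_mckean_vlasov}, namely the exponential rescaling $z(t) = e^{\delta t} y(t)$ and the ground-state unitary $\psi = \unitary z$. Because $\unitary$ is an isometric isomorphism from $\Space_0$ onto $\{\sqrt{\bar\mu}\}^\perp \subset L^2(\Omega)$, we have the identity
\[
\|\psi(t)\|_{L^2(\Omega)} \;=\; \|z(t)\|_{L^2_{\bar\mu^{-1}}} \;=\; e^{\delta t}\,\|y(t)\|_{L^2_{\bar\mu^{-1}}},
\]
so any uniform-in-time bound on $\|\psi(t)\|_{L^2(\Omega)}$ translates at once into the claimed exponential decay for $y$. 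In particular, the initial data match: $\|\psi(0)\|_{L^2(\Omega)} = \|\Pcal\mu_0\|_{L^2_{\bar\mu^{-1}}} = \|y(0)\|_{L^2_{\bar\mu^{-1}}}$.

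First I would fix $\varepsilon$ as the smallness threshold supplied by \autoref{thm:nonlin-existence}, i.e.\ $\varepsilon \coloneqq 3/(16\,C^2(\widetilde C_1+\widetilde C_2))$ with $C, \widetilde C_1, \widetilde C_2$ the constants from \autoref{thm:max-reg-Hpi}, \autoref{lem:feedback-nonlinear} and \autoref{lem:remainder}. Under the hypothesis $\|y(0)\|_{L^2_{\bar\mu^{-1}}} \le \varepsilon$, the smallness assumption of \autoref{thm:nonlin-existence} holds, so the closed-loop equation \eqref{eq:transformed_mckean_vlasov} with datum $\psi(0)=\Pcal_H \unitary\mu_0$ admits a unique solution $\psi \in W(Q_\infty)$ with the explicit bound $\|\psi\|_{W(Q_\infty)} \le 1/(4C(\widetilde C_1 + \widetilde C_2))$.

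Next I would invoke the continuous embedding $W(Q_\infty) \hookrightarrow C_b([0,\infty);L^2(\Omega))$ already used in the proof of \autoref{lem:remainder}; this is the standard Lions trace theorem for intersection spaces and yields a constant $C_{\mathrm{emb}}>0$ with
\[
\sup_{t\ge 0}\|\psi(t)\|_{L^2(\Omega)} \;\le\; C_{\mathrm{emb}}\,\|\psi\|_{W(Q_\infty)} \;\le\; \tilde C,
\]
for a constant $\tilde C$ depending only on $C,\widetilde C_1,\widetilde C_2,C_{\mathrm{emb}}$. Combining with the isometry identity above, I obtain
\[
\|y(t)\|_{L^2_{\bar\mu^{-1}}} \;=\; e^{-\delta t}\|\psi(t)\|_{L^2(\Omega)} \;\le\; \tilde C\, e^{-\delta t}, \qquad \forall t\ge 0,
\]
which is exactly the asserted local exponential stabilization.

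No genuine obstacle arises at this stage since the heavy lifting (maximal regularity, the feedback and remainder Lipschitz-type estimates, and the fixed-point argument) is already absorbed in \autoref{thm:nonlin-existence}; the only minor point to check is that the unitary $\unitary$ maps $\Space_0$ onto $\{\sqrt{\bar\mu}\}^\perp$ isometrically and that $\psi(0)=\unitary\Pcal\mu_0$ lies in this subspace, so that all function-space identifications used in \autoref{sec:control} remain consistent.
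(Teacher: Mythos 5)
Your proposal is correct and is precisely the intended argument: the paper gives no explicit proof (it calls the corollary a ``direct follow-up'' of \autoref{thm:nonlin-existence}), and your route---take $\varepsilon$ equal to the smallness threshold of \autoref{thm:nonlin-existence}, use the embedding $W(Q_\infty)\hookrightarrow C_b([0,\infty);L^2_0(\Omega))$ already invoked in \autoref{lem:remainder}, and undo the isometry $\unitary$ and the rescaling $z=e^{\delta t}y$---is exactly how the chain of results in \autoref{subsec:local} is meant to be closed.
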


Along the closed loop $u(t) = -\B^* \Pi y(t)$, one has the identity
\[
\frac{d}{dt} \left( e^{2\delta t} \langle \Pi y(t), y(t) \rangle_{\bar\mu^{-1}} \right) = - e^{2\delta t} \left( \langle \mathcal{M} y(t), y(t) \rangle_{\bar\mu^{-1}} + |u(t)|_2^2 \right),
\]
hence 
\[
\int_0^{\infty} e^{2\delta t} |u(t)|_2^2 \, dt \le \langle \Pi y(0), y(0) \rangle_{\bar\mu^{-1}}.
\]
In particular, since the operator $\B^* \Pi$ is bounded, \Cref{thm:local-decay} yields 
\[
|u(t)|_2 \le \|\B^* \Pi\| \|y(t)\|_{\bar\mu^{-1}}  \le C \|\B^* \Pi\|  e^{-\delta t}
\]
for all $t > 0$, as long as $\|y(0)\|_{\bar\mu^{-1}}$ is sufficiently small.
\if{Additionally, 
\[
|u(t)|_2^2 = \sum_{j=1}^m \left( \langle \Pi y, \psi_j \rangle_{\bar\mu^{-1}} \right)^2
\]}\fi

\section{Numerical Experiments}
\label{sec:numerics}

To illustrate the performance of the Riccati-based feedback control law, we discretize the controlled \RevAdd{perturbation equation~\eqref{eq:abstract_formulation} on the torus associated with the controlled McKean-Vlasov equation~\eqref{eq:controlled_mckean_vlasov}, using a} spectral method (see, e.g., \RevAdd{\cite{mohammadi2015}}).
\RevAdd{In our experiments, we employ a truncated Fourier-spectral scheme, chosen for its natural handling of periodic boundary conditions, its high accuracy for smooth solutions and coefficients, the fact that the Fourier basis diagonalizes the Laplacian, and the use of Fast Fourier Transform to evaluate convolution terms and inner products.
However, the nonlinear remainder $\W$ in~\eqref{eq:operators_mckean_vlasov} still couples modes, so interactions among retained modes generate frequencies outside the truncated space.
At the discrete level, the feedback design also requires solving a coupled finite-dimensional Riccati equation rather than independent mode-by-mode problems.
Consequently, the truncated dynamics are not closed, and a rigorous convergence analysis of the closed-loop Fourier approximation is nontrivial.}
\RevAdd{Other numerical approaches for nonlinear Fokker-Planck equations include structure-preserving finite-difference schemes designed to preserve positivity, entropy dissipation, and large-time behavior~\cite{pareschi2018structure}.}

We truncate to $2L+1$ Fourier modes
\[
y(x,t) \approx \hat{y}_L(x,t) \coloneqq \sum_{k=-L}^L a_k(t) \phi_k(x), \quad \bar\mu(x) \approx \hat{\mu}_L(x) \coloneqq \sum_{k=-L}^L \bar\mu_k \phi_k(x),
\]
impose $a_0(t) \equiv 0$ and $\bar\mu_0 = 1/\sqrt{2\pi}$ to meet the mass conservation property, and thus end up with a system with $2L$ ordinary differential equations (ODEs) by projecting \RevAdd{equation~\eqref{eq:abstract_formulation}} onto the finite-dimensional space spanned by these modes.
In the torus $\T$, we consider $\phi_k(x) = e^{\mathrm{i}kx}/\sqrt{2\pi}$ so that $\langle \phi_j, \phi_k \rangle = \delta_{jk}$.
For each $i \in I \coloneqq  \{-L, \dots, -1, 1, \dots L\}$, we have
\[
\partial_t a_i = \sum_{k \in I} \Bigl\langle \Lcal\phi_k, \phi_i \Bigr\rangle a_k + \sum_{k \in I}\sum_{j=1}^m u_j \Bigl\langle \Nop_j \phi_k, \phi_i \Bigr\rangle a_k + \sum_{j=1}^m u_j\langle \phi_i, \B_j\rangle + F_i(a),
\]
where $F_i(a) \coloneqq -\langle \hat{y}_L \nabla W * \hat{y}_L, \nabla \phi_i \rangle$.
We then define the inner product matrices
\[
\begin{aligned}
  [L_V]_{ik} &= \bigl\langle \phi_k \nabla V, \nabla \phi_i \bigr\rangle, \\
  [D]_{ik} &= \bigl\langle \nabla\phi_k, \nabla\phi_i \bigr\rangle,\\
  [L_W]_{ik} &= \bigl\langle \phi_k \nabla W * \hat\mu_L + \hat\mu_L \nabla W * \phi_k, \nabla \phi_i \bigr\rangle,\\
  [N_j]_{i k} &= \langle \phi_k \nabla \alpha_j, \nabla \phi_i \rangle, \\ 
  [B_j]_{i}
  &=\langle \B_j,\phi_i\rangle = \bigl\langle \nabla \cdot(\hat\mu_L\,\nabla\alpha_j), \phi_i\bigr\rangle,
\end{aligned}
\]
for $i,k \in I$ so that, in vector form,
\begin{equation}
  \label{eq:nonlinear_finite_dimensional_mckean}
  \partial_t a = -\left[L_V + \sigma D + L_W + \sum_{j=1}^m u_j N_j\right] a  + B u + F(a).
\end{equation}
We also approximate the initial condition by $\hat{\mu}_0$ with Fourier coefficients $\bar{\mu}_{0,i}$ for $i \in I$ and $\bar{\mu}_{0,0} = 1/\sqrt{2\pi}$. 
Finally, we approximate $\Mcal$ by the matrix $M$ defined as
\[
M_{ik} = \langle \Mcal\phi_k, \phi_i\rangle, \quad i,k\in I
\]

With these definitions, set $A=-(L_V + \sigma D + L_W - \delta I)$.
We then solve the finite-dimensional algebraic Riccati equation 
\begin{equation}
  \label{eq:finite_dimensional_riccati_equation}
  A^*\Pi + \Pi A - \Pi B B^* \Pi + M = 0,
\end{equation}
for the (unique) self-adjoint positive-definite matrix $\Pi$.
The resulting optimal feedback is \RevAdd{$u = -B^* \Pi a$}.
\RevAdd{For each fixed truncation level $L$, the closed-loop system~\eqref{eq:nonlinear_finite_dimensional_mckean} after substituting $u$ with the feedback law is a finite-dimensional autonomous ODE with a polynomial right-hand side of the form
\[
\dot a = A_L a + Q_L(a,a),
\]
for a matrix $A_L$ and a bilinear map $Q_L$.
Consequently, the Galerkin coefficients $a$ are smooth in time on their interval of existence, and each feedback component $u_j(t)$ inherits the same time regularity.}

\RevAdd{The quality of the numerical control depends in part on the conditioning of~\eqref{eq:finite_dimensional_riccati_equation}, that is, on the sensitivity of the solution $\Pi$ to perturbations in $A$, $B$, and $M$. 
This local sensitivity is characterized by the Lyapunov operator $H \mapsto A_{\rm cl}^* H + H A_{\rm cl}$, where $A_{\rm cl} = A - BB^*\Pi$, so poor conditioning corresponds to this operator being close to singular~\cite{sun1998perturbation}.
By setting $\B_j = \psi_j$ for the directions satisfying $\Re\lambda_j \ge -\delta$ in \Cref{thm:hautus}, the columns of $B$ are the projections of the eigenfunctions $\psi_j$ of $\Lcal$ onto the Fourier basis, so weak actuation of the modes that need to be stabilized is not expected to be the source of ill-conditioning.
For the practical Fourier ansatz used below, however, this conclusion is no longer automatic.
Since the numerical section is only illustrative, we only check the Hautus stabilizability condition numerically for the finite-dimensional pair $(A,B)$, namely
\[
\operatorname{rank}[\lambda I-A, B] = 2L
\]
for all eigenvalues $\lambda$ of $A$ with $\Re \lambda \ge 0$, instead of pursuing a conditioning analysis.}

In practice, for the next examples, we bypass computing the first few eigenfunctions of $\Lcal$ and instead take a simple Fourier-basis ansatz for the control shapes. 
Concretely, we set $m = 4$ and
\[
\alpha_1(x) = \sin(x),\quad \alpha_2(x) = \cos(x),\quad \alpha_3(x) = \sin(2x),\quad \alpha_4(x) = \cos(2x).
\]
\RevAdd{We verify numerically that the truncated pair satisfies the Hautus condition.
This choice} yields stabilizing feedback in all our examples and provides a practical framework for computing eigenfunctions when the process is numerically complicated.
The controlled ODEs are then integrated in time using a Runge-Kutta of order 5(4).
For simplicity, we take $M = \nu I$, \RevAdd{with $\nu = 10^3$ or $\nu = 10^5$}.

\subsection{Noisy Kuramoto model}
\label{sec:noisy_kuramoto_model}

This model is a canonical McKean-Vlasov PDE \RevAdd{describing the collective dynamics of mean-field coupled phase oscillators on the circle, where attractive coupling competes with diffusion~\cite{kuramoto1984chemical}.
It} exhibits a well-studied synchronization transition (see, e.g.,~\cite{bertini2010dynamical}), \RevAdd{where} the uniform steady state loses stability at a critical coupling strength, and a continuum of nonuniform equilibria bifurcate thereafter. 
This rich phase-transition structure in a tractable model makes it a good proving ground to demonstrate both stabilization of unstable equilibria and redirection onto chosen synchronized distributions.
In our notation, the noisy Kuramoto model is written as $W(\theta) = -K\cos(\theta)$, $\sigma = 0.5$, and $V(\theta) = 0$. 
The critical coupling strength is $K=1$.
\RevAdd{For $K < 1$, the only steady state is the uniform density $\bar\mu = \tfrac1{2\pi}$, also called the fully incoherent state since it corresponds to the absence of synchronization; in this regime, the dynamics converge to $\bar\mu$.
This state becomes critical when $K = 1$, and for $K > 1$, there are infinitely many nonuniform synchronized densities, all translations of~\eqref{eq:steady_state_kuramoto}, and the uniform state becomes unstable.}

\RevAdd{\Cref{fig:kuramoto_steady_state_distributions} summarizes the synchronized steady states and their slow convergence near the bifurcation point.}
\RevAdd{The left panel} plots the stable density $\bar\mu$ given by~\eqref{eq:steady_state_kuramoto}, illustrating how increasing the coupling sharpens the peak.
\RevAdd{The right panel} shows the spectral gap $\lambda_{\rm gap}$ of the linearized operator $\mathcal{L}$ as a function of $K$ when $K > 1$, comparing the theoretical lower bound from~\cite{bertini2010dynamical} with our numerical computations over $K \in (1, 1.25]$.
\RevAdd{The small values of this gap close to $K=1$ indicate slow convergence toward the synchronized steady state in the uncontrolled dynamics.}

\begin{figure}[!htbp]
  \centering
  \begin{minipage}{0.48\textwidth}
    \centering
    \includegraphics[width=\textwidth]{kuramoto_steady_state_distributions.pdf}
  \end{minipage}
  \hfill
  \begin{minipage}{0.48\textwidth}
    \centering
    \includegraphics[width=\textwidth]{kuramoto_example_spectral_gap_comparison.pdf}
  \end{minipage}
  \caption{{\bf Different steady states and spectral gap.} 
  \RevAdd{Left:} shape of the synchronized equilibrium densities $\bar\mu(\theta)$ as coupling $K$ increases.  
  \RevAdd{Right:} numerical estimates and theoretical lower bounds of the spectral gap $\lambda_{\rm gap}$ as a function of $K$.}
  \label{fig:kuramoto_steady_state_distributions}
\end{figure}

We solve the \RevAdd{controlled finite-dimensional system~\eqref{eq:nonlinear_finite_dimensional_mckean}} from a fixed initial distribution $\mu_0$, and compare the resulting trajectories \RevAdd{of $y$, defined in~\eqref{eq:y_definition}, in terms of the weighted norm} $\|y(t)\|_{\bar\mu^{-1}}$ with and without the feedback control.
\Cref{fig:kuramoto_control_panels} presents three distinct scenarios for the noisy Kuramoto model:

\begin{enumerate}
  \item[(a)] for $K < 1$, feedback accelerates convergence to the globally stable incoherent state $\bar\mu = (2\pi)^{-1}$; 

  \item[(b)] for $K > 1$, where $\bar\mu =  (2\pi)^{-1}$ becomes unstable, \RevAdd{the} control successfully stabilizes it;

  \item[(c)] for $K > 1$, we steer the system toward a \RevAdd{prescribed spatial translation of the steady state} distribution \RevAdd{given in~\eqref{eq:steady_state_kuramoto}, namely} $\bar\mu(\cdot + \theta_0)$ \RevAdd{for some fixed $\theta_0 \in \T$}, in contrast with the uncontrolled dynamics, which converge to a different equilibrium. 
\end{enumerate}

Fitting a linear trend to $\log_{10} \|y(t)\|_{\bar\mu^{-1}}$ over $t\in[0.2, 2]$ yields slopes of approximately $-3.5, -3.4, -3.2$, respectively, for each example, corresponding to an exponential decay rate $\delta \ge 3$ for all three examples.
As a comparison, the uncontrolled dynamics in the first example decay exponentially at a rate $0.01$.

\begin{figure}[!htbp]
  \centering
  \includegraphics[width=\textwidth]{kuramoto_control_panels.pdf}
  \caption{{\bf Results for the control for Noisy Kuramoto Model.}
    (a) Acceleration to the incoherent state ($K=0.95$): Riccati feedback (blue) vs.\ uncontrolled (red dashed).  
    (b) Stabilization of the unstable uniform state ($K=5$): feedback recovers exponential decay.  
    (c) Steering under a different stable steady state (different translation) ($K=5$): feedback still maintains decay.  
    All panels plot $\|y(t)\|_{\bar\mu^{-1}}$ on a log-scale.
    \RevAdd{In all three panels, $\nu = 10^3$.}}
  \label{fig:kuramoto_control_panels}
\end{figure}

Quantitative evidence is provided by \Cref{fig:kuramoto_changing_K}.
This figure illustrates how the feedback law enforces exponential decay for different values of $K$, both toward the unstable uniform state (scenario (a)) and the stable steady state (scenario (b)). 
In contrast, uncontrolled dynamics (scenario (c)) converge more slowly to the stable steady state and do not converge in the unstable case.
This result uses $\delta = 3$ for all examples.
\Cref{fig:kuramoto_spectra} shows the spectrum of the linearized operator $\mathcal{L}$ at $\bar\mu = 1/2\pi$ and the closed-loop operator $\mathcal{L} - \B \B^\ast \Pi$ for the noisy Kuramoto model with $K=0.5$, $K=0.95$, and $K=5$. 
For the cases where $K < 1$, all the real parts of the eigenvalues are negative but as close to the threshold $K_c=1$, the smaller the spectral gap is.
For $K > 1$, there is an eigenvalue with positive real part, which gives the instability of the uniform distribution.
In all cases, the feedback moves unstable or weakly stable modes into the half-plane $\Re(\lambda) \le -\delta$, achieving the prescribed exponential stabilization rate.
 
\begin{figure}[!htbp]
  \centering
  \includegraphics[width=\textwidth]{kuramoto_example_changing_K.pdf}
  \caption{ {\bf Results for the controlled noisy Kuramoto model with different couplings.} 
  Log-scale evolution of the scaled norm $\|y(t)\|_{\bar\mu^{-1}} / \|y(0)\|_{\bar\mu^{-1}}$ for three scenarios: (a) feedback-controlled convergence to the uniform steady state, (b) feedback-controlled acceleration to the stable steady state, and (c) open-loop (uncontrolled) convergence to the same target.
  \RevAdd{In all controlled computations shown here, $\nu = 10^3$.}}
  \label{fig:kuramoto_changing_K}
\end{figure}

\RevAdd{\Cref{fig:kuramoto_controls_cost} complements the plots from~\Cref{fig:kuramoto_changing_K} by showing the control components in one representative run and the associated cumulative cost
\[
J(t) \coloneqq \frac12\int_0^t e^{2\delta s}\left(\langle y(s), \M y(s)\rangle_{\bar\mu^{-1}} + |u(s)|_2^2\right)\, ds.
\]
In the $K=3$ example on the left, one component dominates while the others remain close to zero for this representative initial condition and target.
The right panel shows that, for all displayed values of $K$, the cumulative cost quickly reaches a plateau in the controlled dynamics targeting the unstable uniform density, indicating that most of the control effort is concentrated near the initial transient.}

\begin{figure}[!htbp]
  \centering
  \includegraphics[width=0.8\textwidth]{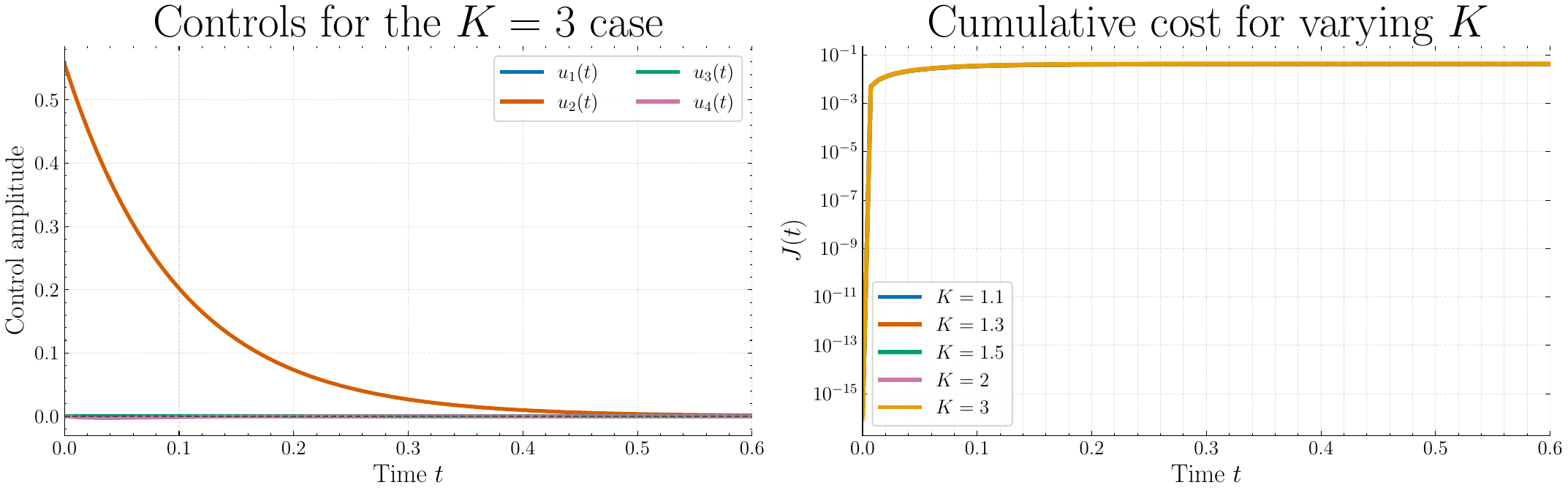}
  \caption{\RevAdd{{\bf Representative controls and cumulative cost in the noisy Kuramoto model.} 
  Left: control components $u_1(t),\dots,u_4(t)$ for a representative controlled run with $K=3$ and target $\bar\mu=(2\pi)^{-1}$.
  Right: cumulative cost $J(t)$ for controlled convergence to the uniform steady state and varying $K$.
  Here $\nu = 10^3$.}}
  \label{fig:kuramoto_controls_cost}
\end{figure}

\begin{figure}[!htbp]
\centering
    \includegraphics[width=\textwidth]{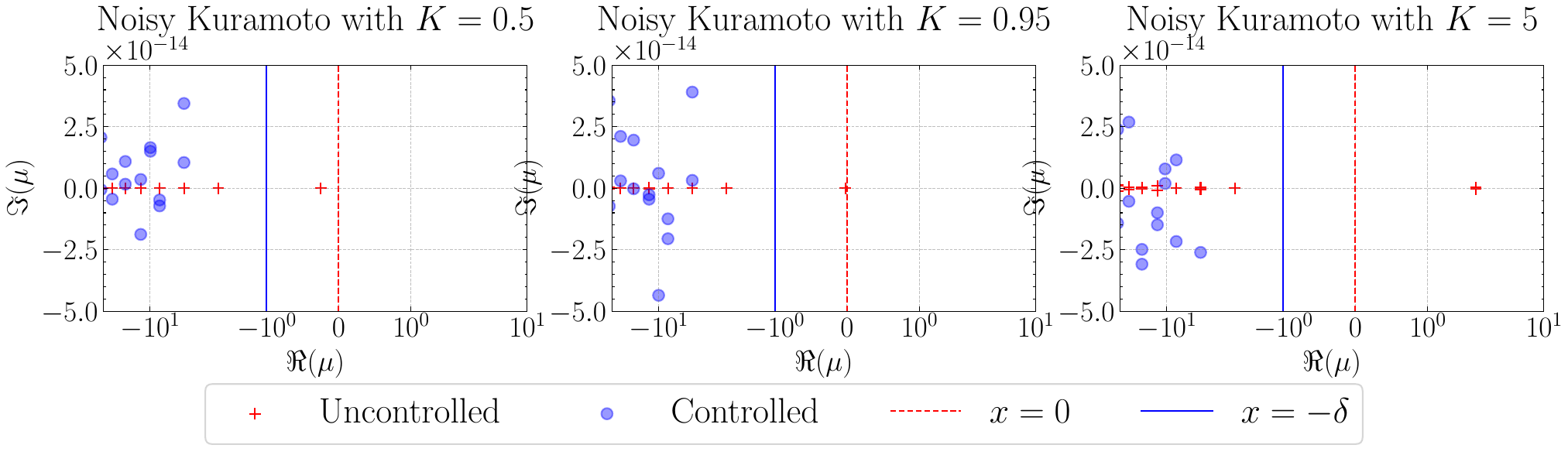}
    \caption{{\bf Spectral shift by the feedback control in the noisy Kuramoto model.} Spectra with real part at least $-30$ of the uncontrolled generator $\mathcal{L}$ (red crosses) and the closed-loop operator $\mathcal{L} - \B \B^\ast \Pi$ (blue circles) for the noisy Kuramoto model with three coupling strengths $K$. 
    The red dashed line marks $x=0$ (stability boundary) and the blue solid line marks $x=-\delta$ with $\delta=1$ (target decay rate).
    \RevAdd{Here $\nu = 10^3$.}}
    \label{fig:kuramoto_spectra}
\end{figure}

\subsection{Symmetry breaking example}

To demonstrate the applicability of our Riccati-based feedback control scheme, we evaluate it in three additional scenarios where either an external potential or a nonconvex interaction breaks the translation symmetry of the classical noisy Kuramoto model.  
Although each example features a different bifurcation mechanism, the same feedback design consistently restores, or, in the case of unstable targets, enables, uniform exponential decay.

\subsubsection*{(a) Cosine-potential perturbation}  
We add \RevAdd{the} periodic potential $V(x) = \cos(2x)$ to the noisy Kuramoto model \RevAdd{introduced in \Cref{sec:noisy_kuramoto_model}} with $K=1$.
In particular, $V$ attains its global minima twice at $x=\pi/2$ and $x=3\pi/2$ and its maxima at $x=0$ and $\pi$.
Thus, it creates two symmetric wells separated by a barrier.
At a high temperature $\sigma \gtrsim 0.78$, there exists a unique steady state $\bar\mu(x) \propto \exp(-V(x)/\sigma)$. 
As the temperature $\sigma$ decreases past a critical value ($\approx 0.77)$, two other steady states emerge, each with a peak at either $x=\pi/2$ or \RevAdd{$x=3\pi/2$}  (see~\cite{angeli2023well}).
In \Cref{fig:kuramoto_potential_control_heatmap}, we sweep $\sigma \in [0.6, 1.0]$ and \RevAdd{plot the time evolution of} $\|y\|_{\bar\mu^{-1}}$, where $\bar\mu \propto e^{-\cos(2x)/\sigma}$.
\RevAdd{The top panel shows that, under the feedback control, the perturbation decays rapidly to values below $10^{-8}$ across the entire range of temperatures considered even when the number of controls is fixed.
In contrast, the bottom panel shows that without control the decay depends strongly on $\sigma$, in the sense that for smaller $\sigma$ the norm remains large over the whole time window, while for larger $\sigma$ it decreases only gradually.}

\begin{figure}[!htbp]
  \centering
  \includegraphics[width=0.7\textwidth]{kuramoto_potential_control_heatmap.pdf}
  \caption{{\bf Heatmap of Convergence for cosine-potential perturbation.}
  Base-10 logarithm of the norm $\|y(t)\|_{\bar\mu^{-1}}$ versus time $t$ (horizontal axis) and temperature $\sigma$ (vertical axis) for the confining potential \(V(x)=\cos(2x)\) and interaction \(W(x)=-\cos x\).
  \textbf{Top:} controlled dynamics under the Riccati feedback law. 
  \textbf{Bottom:} uncontrolled dynamics. 
  White contour lines indicate levels $\log_{10}\|y\|_{\bar\mu^{-1}}\in\{-8,-6,-4,-2\}$.
  \RevAdd{Here $\nu = 10^5$.}}
  \label{fig:kuramoto_potential_control_heatmap}
\end{figure}

\subsubsection*{(b) O(2) model in a small magnetic field}

Next, we bias the noisy Kuramoto with $K=1$ by adding an \RevAdd{external field} $V(x) = -\eta \cos(x)$ with the field-strength parameter $\eta = 0.05$ so that $x=0$ \RevAdd{becomes the} preferred alignment \RevAdd{direction}.
As shown in~\cite{bertoli2024stability}, the uncontrolled dynamics undergo a bifurcation at a critical temperature. 
\Cref{fig:o2_sigma_sweep} compares the decay of the norm $\|y(t)\|_{\bar\mu^{-1}}$ for the controlled dynamics (left) and for the uncontrolled dynamics (right), across several temperatures $\sigma$.
The left panel demonstrates how the feedback control accelerates convergence to the steady state with a uniform exponential rate even close to the bifurcation threshold ($\sigma \approx 0.47$).
In contrast, the right panel shows the much slower (or stalled) decay of the uncontrolled dynamics.
In these experiments, we considered $\delta = 1$.

\begin{figure}[!hbtp]
  \centering
  \begin{subfigure}{\textwidth}
    \includegraphics[width=\textwidth]{o2_example_control_different_sigmas.pdf}
    \caption{O(2) model in a magnetic field with \(\eta=0.05\) and \(K=1\).}
    \label{fig:o2_sigma_sweep}
  \end{subfigure}

  \begin{subfigure}{\textwidth}
    \includegraphics[width=\textwidth]{von_mises_control_different_sigmas.pdf}
    \caption{Attractive Von Mises interaction.}
    \label{fig:von_mises_control_different_sigmas}
  \end{subfigure}

  \caption{\textbf{Feedback control in two non-convex 1D examples.}  
    In both cases, the feedback control (left panels) enforces rapid exponential decay across all tested temperatures, whereas the uncontrolled dynamics (right panels) either stall or decay only for large \(\sigma\).
    \RevAdd{In both examples, $\nu = 10^5$.}}
  \label{fig:stacked_nonconvex1D}
\end{figure}

\subsubsection*{(c) Attractive Von Mises interaction}

We now turn to a pure mean-field coupling without a confining potential, choosing the interaction kernel 
\[
W(x) = -I_0(\theta)^{-1}\,\exp\!\bigl(\theta\cos x\bigr), \quad x\in[0,2\pi],
\]
where $I_0(\theta)$ denotes the modified Bessel function of order zero.
This model is a periodized analogue of a Gaussian attractive interaction and is called the attractive Von Mises interaction model.
It is known (see \cite[Cor. 5.14]{carrillo2020long}) that for high temperature $\sigma$, the uniform \RevAdd{density} is the unique steady state and, as the temperature decreases below a critical value, \RevAdd{it} loses stability, and a stable nonuniform steady state emerges. 
\Cref{fig:von_mises_control_different_sigmas} depicts the acceleration of convergence through control.
In our four-function ansatz $\{\alpha_j\}_{j=1}^4$, the maximum design rate we could numerically realize was $\delta\approx0.06$ (considering all examples), so the improvement in decay is modest for small $\sigma$.  
However, if we enrich the shape-function space, e.g., taking $8$ Fourier modes $\alpha_j$ instead of $4$, the solvable $\delta$ rises (in our tests up to $\delta \approx 0.2$), yielding faster convergence, which we do not depict here.

\subsection{Feedback stabilization on the two-dimensional torus}

\RevAdd{To illustrate that the Riccati-based feedback design extends beyond one dimension numerically, we consider a two-dimensional attractive von Mises-type interaction on the torus $\T^2$.
We take $V(x,y) \equiv 0$, $\sigma = 0.5$, the interaction kernel
\[
W(z_1,z_2) = -\frac{K}{Z_{\theta,\rho}}
\exp\left(\theta(\cos z_1 + \cos z_2) + \rho \cos z_1 \cos z_2\right),
\]
where
\[
Z_{\theta,\rho} = \int_{\T^2}
\exp\left(\theta(\cos z_1 + \cos z_2) + \rho \cos z_1 \cos z_2\right) \, dz_1 dz_2,
\]
and take the target equilibrium to be the uniform density $\bar\mu=(2\pi)^{-2}$.
This kernel is a two-dimensional analogue of the attractive von Mises interaction, with $\rho$ controlling the coupling between the two angular variables.}

On $\T^2$, we consider the orthonormal Fourier modes
\[
\phi_{k_x, k_y}(x,y) = \frac{1}{2\pi} e^{\mathrm{i}(k_x x + k_y y)}, \quad k_x, k_y = -L, \dots, L,
\]
\RevAdd{and one single state vector indexed by all nonzero modes $(k_x,k_y) \neq (0,0)$, where the zero mode is omitted because of mass conservation.
For the shape control functions, we pick the four coordinate modes and the four diagonal modes
\[
\begin{aligned}
  \alpha_1(x,y) = \cos x, \quad \alpha_2(x,y) = \sin x, &\quad \alpha_3(x,y) = \cos y, \quad \alpha_4(x,y) = \sin y, \\
  \alpha_5(x,y) = \cos(x+y), &\quad \alpha_6(x,y) = \sin(x+y),\\ 
  \alpha_7(x,y) = \cos(x-y), &\quad \alpha_8(x,y) = \sin(x-y).
\end{aligned}
\]
In the reported computation we use $\theta=1.5$, $\rho=0.75$, $\delta=1$, $\nu=10^5$, and $L=8$.
We choose $K = 30$ so that the uniform density is unstable for this truncation.
\Cref{fig:2D_example} shows the resulting norm decay, the feedback controls, the initial density, the final uncontrolled density, and the final controlled density.}
This figure confirms that the same Riccati feedback control strategy achieves exponential stabilization with a rate parameter at least $\delta = 1$.

\begin{figure}[!htbp]
  \centering
  \begin{subfigure}{0.4\textwidth}
    \centering
    \includegraphics[width=\linewidth]{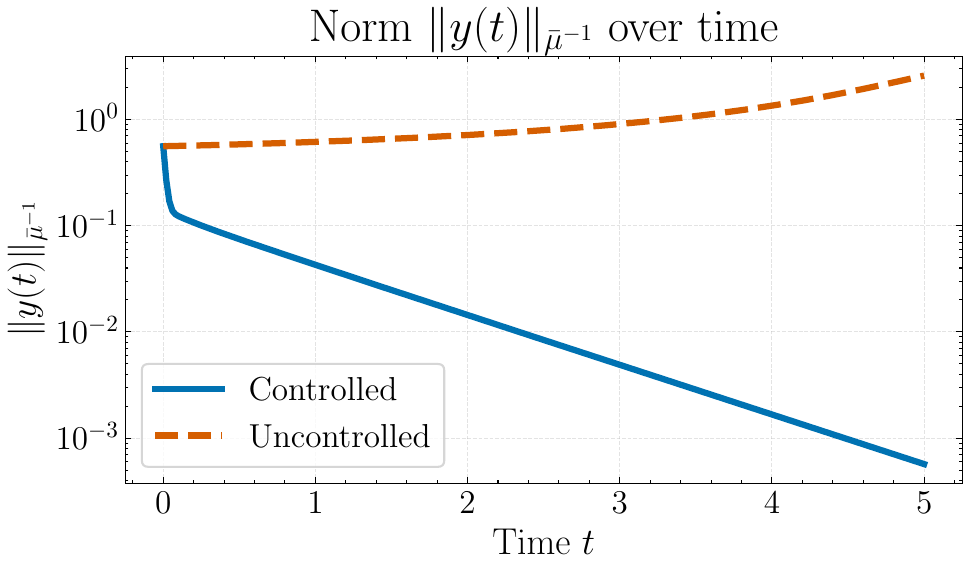}
  \end{subfigure}
  \begin{subfigure}{0.4\textwidth}
    \centering
    \includegraphics[width=\linewidth]{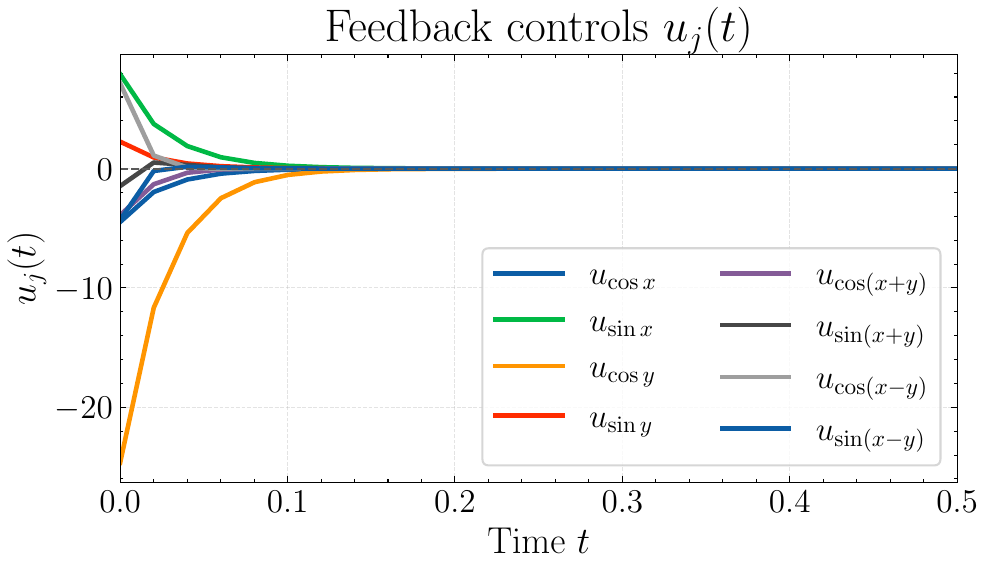}
  \end{subfigure}

  \begin{subfigure}{0.32\textwidth}
    \centering
    \includegraphics[width=\linewidth]{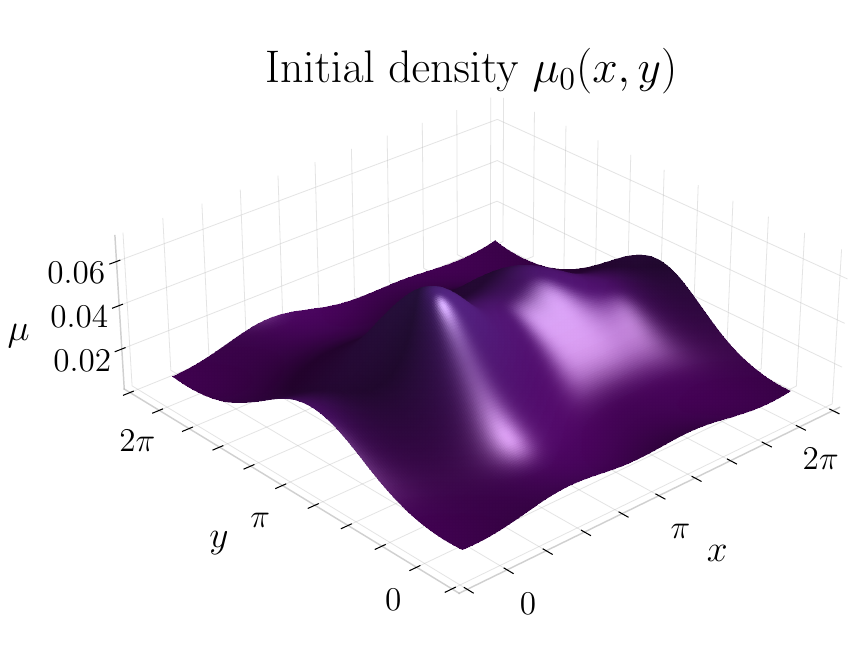}
  \end{subfigure}
  \hfill
  \begin{subfigure}{0.32\textwidth}
    \centering
    \includegraphics[width=\linewidth]{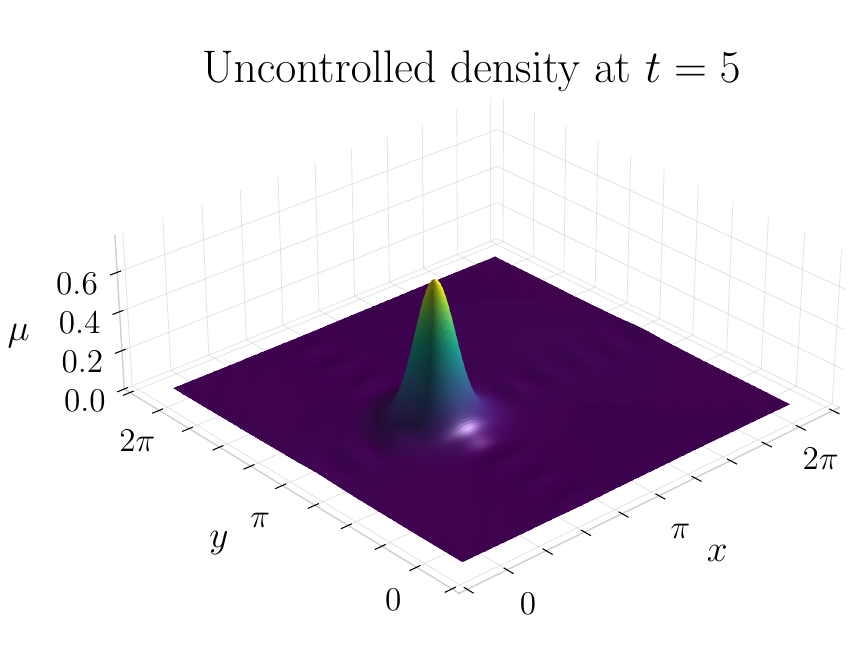}
  \end{subfigure}
  \hfill
  \begin{subfigure}{0.32\textwidth}
    \centering
    \includegraphics[width=\linewidth]{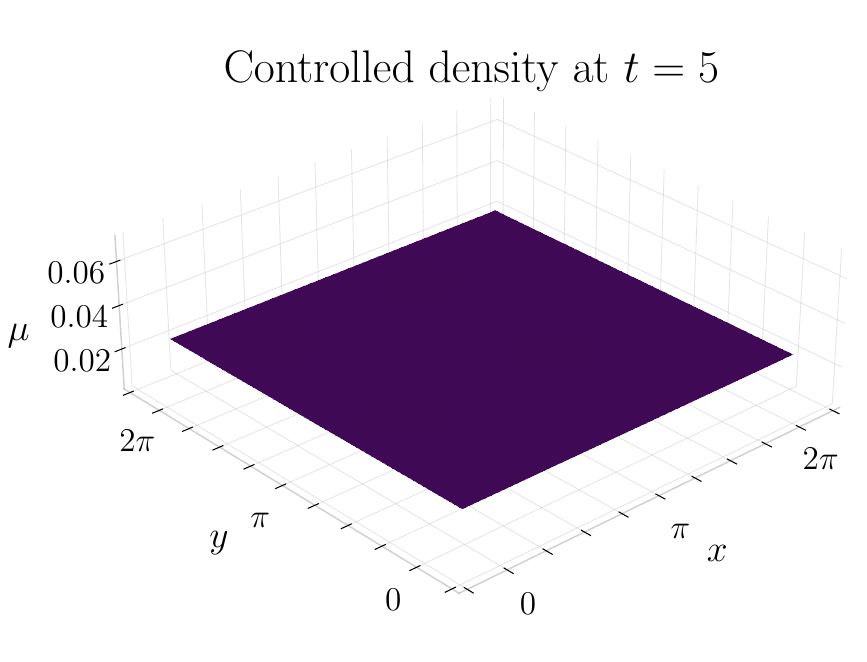}
  \end{subfigure}
  \caption{\textbf{2D torus example.}
    \RevAdd{2D attractive von Mises-type example with $K=30$, $\theta=1.5$, $\rho=0.75$, and feedback weight $\nu=10^5$.
    The top row reports the time evolution of $\|y(t)\|_{\bar\mu^{-1}}$ and the feedback controls $u_j(t)$.
    The bottom row compares the initial distribution $\mu_0$ with the uncontrolled and controlled densities at $t=5$.}}
  \label{fig:2D_example}
\end{figure}

\section{Conclusions}
\label{sec:conclusions}

We have proposed and rigorously analyzed a Riccati-based feedback design for a nonlocal and nonlinear Fokker-Planck (McKean-Vlasov) equation on the torus.  
By mapping the linearized dynamics via a ground-state transform to a Schr\"odinger-type operator with compact resolvent, we verified an infinite-dimensional Hautus criterion, established the corresponding operator-Riccati equation, and proved local exponential stabilization of the PDE under the resulting feedback law by providing energy estimates for the nonlinear terms.

\RevAdd{The numerical experiments illustrate that the same four-mode Fourier ansatz yields exponential decay across qualitatively different regimes in one dimension (noisy Kuramoto, cosine-potential perturbation, O(2) model, von Mises interaction), including both acceleration of convergence and redirection between steady states. 
Moreover, in our experiments, enlarging the shape-function space increases the achievable decay rate $\delta$.
We further demonstrated that this approach extends to two dimensions via a tensor-product Fourier discretization.
However, because the nonlinear term $\W$ couples modes, the truncated dynamics are not closed under the nonlinear Fourier dynamics, and a rigorous convergence analysis of the closed-loop Fourier approximation remains nontrivial. 
In addition, for the practical truncated systems considered here, the Hautus stabilizability condition is checked numerically for each finite-dimensional pair $(A,B)$.}

These results establish a practical and theoretically grounded framework for direct PDE-feedback control of nonlocal mean-field dynamics.  
Future work will address (i) scalable solvers for high-dimensional problems; (ii) robustness of the control \RevAdd{with respect to perturbations in} the model parameters; \RevAdd{(iii) analysis of the closed-loop Fourier approximation, including conditioning of the truncated Riccati equation and convergence of the controlled solution as the truncation level increases}; and (iv) extension of the feedback design to the underlying particle system.

\appendix

\section{Technical operator-theoretic results}\label{app:operator}

\begin{lemma}\label{lem:perturb-pure}
    Let $T$ be a self-adjoint closed operator on a Hilbert space $H$ with compact resolvent, and let $A : D(A) \subseteq H \to H$ be a bounded operator in $H$, so that $D(A) \supset D(T)$.
    Then $S \coloneqq T+A$ is closed and also has compact resolvent. 
\end{lemma}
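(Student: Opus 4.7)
First note the evident typo in the statement: $S := T+K$ should read $S := T+A$, since only the operator $A$ has been introduced. Given $D(A)\supseteq D(T)$ and $A$ bounded, I will interpret $A\in\mathbb{L}(H)$ (which is the case in both applications of the lemma in the paper). The plan is classical: (i) show $S$ is closed on $D(S)=D(T)$ as a bounded perturbation of a closed operator; (ii) exhibit a point $\lambda_0\in\rho(S)$ and factor $(\lambda_0 I-S)^{-1}$ as a bounded operator times the compact resolvent of $T$.

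\textbf{Closedness.} Take a sequence $\{x_n\}\subset D(T)$ with $x_n\to x$ and $Sx_n\to y$ in $H$. Since $A\in\mathbb{L}(H)$, $Ax_n\to Ax$, hence $Tx_n = Sx_n - Ax_n \to y-Ax$. Closedness of $T$ gives $x\in D(T)=D(S)$ and $Tx=y-Ax$, i.e.\ $Sx=y$. Thus $S$ is closed.

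\textbf{Nonempty resolvent.} Because $T$ is self-adjoint, $\sigma(T)\subset\R$ and for $\lambda=i\tau$ with $\tau\in\R\setminus\{0\}$ one has $\|(\lambda I-T)^{-1}\|_{\mathbb{L}(H)}\le 1/|\tau|$. Choose $\tau$ with $|\tau|>\|A\|_{\mathbb{L}(H)}$, so that $\|A(\lambda I-T)^{-1}\|<1$. The Neumann series then gives invertibility of $I-A(\lambda I-T)^{-1}$ with a bounded inverse, and from the identity
\[
\lambda I - S = \bigl(I - A(\lambda I-T)^{-1}\bigr)(\lambda I - T)
\]
on $D(T)$, we obtain $\lambda\in\rho(S)$ and
\[
(\lambda I - S)^{-1} = (\lambda I - T)^{-1}\bigl(I - A(\lambda I-T)^{-1}\bigr)^{-1}.
\]

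\textbf{Compactness.} The factor $(\lambda I - T)^{-1}$ is compact by hypothesis, and $\bigl(I - A(\lambda I-T)^{-1}\bigr)^{-1}\in\mathbb{L}(H)$. Since the composition of a compact operator with a bounded operator is compact, $(\lambda I-S)^{-1}$ is compact. By the first resolvent identity, compactness of the resolvent at one point of $\rho(S)$ implies compactness at every point of $\rho(S)$, completing the proof.

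There is no real obstacle here; the only mildly delicate point is justifying $\rho(S)\neq\emptyset$, which is handled by the self-adjointness of $T$ (giving the decaying resolvent bound on the imaginary axis) together with the boundedness of $A$.
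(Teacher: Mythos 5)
Your proof is correct and follows essentially the same route as the paper: both hinge on the self-adjoint resolvent bound $\|(\lambda I-T)^{-1}\|\le 1/|\Im\lambda|$ and the choice of a spectral parameter with imaginary part exceeding $\|A\|$, the paper simply delegating the Neumann-series factorization to Kato's Theorem IV-3.17 where you write it out explicitly. Your version is slightly more complete in that it also records the closedness of $S$ and the propagation of compactness to all of $\rho(S)$ via the resolvent identity, and you correctly identify the typos ($T+K$ for $T+A$; the paper's bound $\le 1/a$ should be $\le 1/b$).
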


\begin{proof}
    By Kato's perturbation theorem~\cite[Thm. IV-3.17]{kato2013perturbation}, we need to verify that there exists some $\zeta \in \rho(T)$ such that 
    \begin{equation}
        \label{eq:desired_inequality}
        \|A\|  \|R(\zeta, \RevAdd{T})\| < 1,
    \end{equation}
    \RevAdd{where $R(x,T) := (xI-T)^{-1}$ is the resolvent of $T$ at $x$}.
    By~\cite[Thm. V-3.16]{kato2013perturbation}, the self-adjointness of $T$ implies that $R(x,T)$ is defined for all $x \in \C \setminus \R$ and $\|R(a+b\mathrm{i}, T)\| \le \frac{1}{|b|}$ for all $b > 0$ and $a \in \R$, where $\mathrm{i}$ is the imaginary number.
    Taking $\zeta = a + \mathrm{i}(\|A\| + 1)$ leads to the inequality~\eqref{eq:desired_inequality}.
\end{proof}

\begin{proof}[Proof of \Cref{lem:K-bounded}]\label{proof:K-bounded}
    Operator $\K$ is defined as $\mathcal{K}[\varphi](x) = \int_{\Omega} k(x,y) \varphi(y) \, dy$.
    An integral operator is Hilbert-Schmidt iff its kernel lies in $L^2(\Omega \times \Omega)$.
    Because $W\in W^{2,\infty}(\Omega)$, there exists $C_W > 0$ such that 
    \[
    \| \nabla W \|_{\infty} \le C_W,  \quad \| \Delta W \|_{\infty} \le C_W.
    \]
    Additionally, notice that $\nabla \bar\mu = -\bar\mu\nabla (V + W*\bar\mu)/\sigma$ and define 
    \[
    a(x) = \sqrt{\bar\mu(x)}\left(\sigma^{-1}|\nabla(V(x) + (W*\bar\mu)(x))| + 1 \right), \quad b(y) = C_W\sqrt{\bar\mu(y)}
    \]
    so that $|k(x,y)| \le a(x) b(y)$.
    It is clear that $b \in L^2(\Omega)$ with $\|b\| = C_W$ because $\bar\mu$ is a probability density.
    Furthermore,
    \begin{equation*}
        \begin{split}
            \|a\| &\le \frac{1}{\sigma}\|\sqrt{\bar\mu} \nabla V\| + \frac{1}{\sigma}\|\sqrt{\bar\mu} \nabla W * \bar\mu\| + \|\sqrt{\bar\mu}\| \\
            &\le \frac{1}{\sigma \sqrt{Z}} \|e^{-V/(2\sigma)} \nabla V\| + \frac{C_W}{\sigma} + 1, \\
            &=\frac{2}{\sqrt{Z}} \|\nabla (e^{-V/(2\sigma)})\| + \frac{C_W}{\sigma} + 1,
        \end{split}    
    \end{equation*}
    which implies that $a \in L^2(\Omega)$ since $\nabla \phi \in L^2(\Omega)$ for $\phi \coloneqq e^{-V/(2\sigma)}$ because $\nabla V \in L^2(\Omega)$ and $e^{-V/(2\sigma)}$ is bounded.
    Therefore, 
    \RevAdd{\[
    \iint_{\Omega} |k(x,y)|^{2} \, dx \, dy \le \|a\|^{2}\|b\|^{2}<\infty.
    \]}
    Hence $\mathcal{K}$ is Hilbert-Schmidt and therefore compact on $L^{2}(\Omega)$.
\end{proof}

\section{Full well-posedness proof}
\label{app:full-wellposedness-proof}

The goal of this appendix is to provide a complete, step-by-step proof of the global existence and uniqueness of solutions to the McKean-Vlasov equation on the $d$-dimensional torus under the assumptions stated below.
While the main article outlines the argument and states the result, here we include all intermediate estimates, functional-analytic tools, and auxiliary lemmas used in the proof.

\subsection{Preliminaries}

Let $\Omega \coloneq (\R / 2\pi \mathbb{Z})^d$ be the $d$-dimensional periodic torus. 
For $1 \le p \le \infty$ and a Banach space $X$, we write $L^p(0,T;X)$ for the Bochner space of $p$-integrable maps $[0,T] \to X$. 
We denote by $L^p(\Omega)$ the space of $p$-integrable periodic functions on $\Omega$, and by $W^{k,p}(\Omega)$ the periodic Sobolev spaces, with $H^k(\Omega) = W^{k,2}(\Omega)$. 
We use $L^1_{+}(\Omega) \coloneq \{f \in L^1(\Omega) : f \ge 0\}$, $L^2_{0}(\Omega) \coloneq \left\{f \in L^2(\Omega) : \int_{\Omega} f = 0\right\}$, and for a positive weight $w > 0$ we set $L^p(\Omega, w) \coloneq \{f : w^{1/p} f \in L^p(\Omega)\}$ when $p < \infty$.
The inner product in $L^2(\Omega)$ is denoted by $\langle \cdot,\cdot\rangle$, and the inner product in $L^2(\Omega, w)$ by $\langle \cdot,\cdot\rangle_{w}$. 
The space $H^{-1}(\Omega)$ denotes the dual of $H^1(\Omega)$, with respect to $L^2(\Omega)$ as the pivot space. 
We write $\nabla = (\partial_{x_1},\dots,\partial_{x_d})$, $\nabla \cdot F = \sum_{i=1}^d \partial_{x_i}F_i$, and $\Delta = \nabla \cdot\nabla$.

We consider the periodic McKean-Vlasov equation
\begin{align}
    \partial_t \rho = \sigma \Delta \rho + \nabla \cdot\bigl(\rho\,\nabla v[\rho]\bigr), \quad \text{in} \; \Omega \times (0,T),
    \label{sup:eq:mv_equation}
\end{align}
where $v[\rho] = V + W * \rho$, $(W*\rho)(x) \coloneq \int_\Omega W(x-y)\,\rho(y)\,dy$, with periodic boundary conditions on $\Omega$, and  $\rho(\cdot,0) = \rho_0 \in \Pac$.
Periodicity ensures conservation of mass
\begin{equation}
    \label{sup:eq:normalization_cond}
    \int_{\Omega}\rho(x,t) \, dx = \int_{\Omega}\rho_0(x) \, dx =1, \qquad \forall t \in [0,T].
\end{equation}

Throughout, we assume $W \in W^{2,\infty}(\Omega)$ is integrable and coordinate-wise even, and $V \in W^{1,\infty}(\Omega) \cap W^{2,\max\{d,2+\epsilon\}}(\Omega)$ for some $\epsilon > 0$.

\begin{definition}
    A function $\rho \in L^{2}(0,T;H^{1}(\Omega)) \cap L^{\infty}(0,T;L^{2}(\Omega))$ with $\partial_t \rho \in L^{2}(0,T;H^{-1}(\Omega))$ is a {\em weak solution} of~\eqref{sup:eq:mv_equation} if, for every $\eta\in L^{2}(0,T;H^{1}(\Omega))$,
	\begin{equation}
		\int_{0}^{T}\left\langle \partial_t \rho(t),\eta(t) \right\rangle \, dt + \int_{0}^{T} \int_{\Omega} \left(\sigma \nabla \rho + \rho \nabla v[\rho] \right) \cdot \nabla \eta \, dx \, dt = 0,
	\end{equation}
	and $\rho(0) = \rho_{0}$ (see \Cref{sup:thm:Evans}).
\end{definition}

\subsection{Probability density property\label{sup:sec:apriori}}

Let us assume that there exists a solution $\rho\in C^{1}\left(0,\infty;C^{2}\left(\Omega\right)\right)$, to equation~\eqref{sup:eq:mv_equation} to obtain a priori energy estimates so that we can deduce the solution's nonnegativity. 

\begin{lemma}
    \label{sup:lem:L1}
    Suppose $\rho \in C^{1}\left(0,\infty; C^{2}\left(\Omega\right)\right)$ is a solution of~\eqref{sup:eq:mv_equation} with $\rho_{0}\in L^{1}\left(\Omega\right)$, then $\|\rho(t)\|_{1}\le\|\rho_0\|_{1}$ for all $t \ge 0$. 
\end{lemma}

\begin{proof}
    Let $\epsilon > 0$ and define $\chi_{\epsilon}$ to be a convex $C^{2}$-approximation of the absolute value function satisfying $|r| \le \chi_{\epsilon}(r)$, such as
	\begin{equation}
            \label{sup:eq:chi_def}
            \chi_{\epsilon}\left(r\right) = \begin{cases} \left|r\right| & \left|r\right|>\epsilon \\ 
            -\frac{r^{4}}{8\epsilon^{3}}+\frac{3r^{2}}{4\epsilon}+\frac{3\epsilon}{8} & \left|r\right|\le\epsilon
            \end{cases}. 
	\end{equation}
	Multiplying $\chi_{\epsilon}^{\prime}(\rho)$ to \eqref{sup:eq:mv_equation} and integrating by parts over $\Omega$, we have
	\begin{equation}
            \begin{split}
                \frac{d}{dt} \int_{\Omega} \chi_{\epsilon} \left(\rho(x,t)\right)\, dx + \sigma \|&\nabla \rho(t) \left[\chi_{\epsilon}^{\prime\prime}\left(\rho(t)\right)\right]^{1/2}\|^{2} = \\
                &-\int_{\Omega} \rho(x,t) \chi_{\epsilon}^{\prime\prime} \left(\rho(x,t)\right) \nabla v[\rho](x,t) \cdot \nabla \rho(x,t)\, dx.
            \end{split}
	\end{equation}
By Cauchy-Schwarz and Young's inequality, 
	\begin{align*}
            \frac{d}{dt} \int_{\Omega} \chi_{\epsilon} \left(\rho(x,t)\right)\, dx &+ \sigma \left\Vert \nabla \rho(t)\left[\chi_{\epsilon}^{\prime\prime}\left(\rho(t)\right)\right]^{1/2}\right\Vert^{2}\nonumber \\
            & \le \left\Vert \rho(t) \nabla v[\rho](t)\left[\chi_{\epsilon}^{\prime\prime}\left(\rho(t)\right)\right]^{1/2}\right\Vert \left\Vert \nabla\rho(t)\left[\chi_{\epsilon}^{\prime\prime}\left(\rho(t)\right)\right]^{1/2}\right\Vert\nonumber \\
            & \le \frac{1}{\sigma}\left\Vert \rho(t) \nabla v[\rho](t)\left[\chi_{\epsilon}^{\prime\prime}\left(\rho(t)\right)\right]^{1/2}\right\Vert^{2} + \sigma \left\Vert \nabla\rho(t)\left[\chi_{\epsilon}^{\prime\prime}\left(\rho(t)\right)\right]^{1/2}\right\Vert^{2},
	\end{align*}
	where we take $p=q=2$ and $\epsilon = \sqrt{\sigma/2}$ in Young's inequality.
	Therefore, 
	\begin{align*}
		\frac{d}{dt} \int_{\Omega} \chi_{\epsilon}\left(\rho\left(x,t\right)\right)\, dx & \le \frac{1}{\sigma}\left\Vert \rho(t) \nabla v[\rho](t)\left[\chi_{\epsilon}^{\prime\prime}\left(\rho(t)\right)\right]^{1/2}\right\Vert^{2}\nonumber \\
		& \le \frac{1}{\sigma}\left\Vert \nabla v[\rho](t)\right\Vert _{\infty}^{2}\left\Vert \rho(t)\left[\chi_{\epsilon}^{\prime\prime}\left(\rho(t)\right)\right]^{1/2}\right\Vert^{2}.
	\end{align*}
        Notice that $\left\Vert \nabla v[\rho](t)\right\Vert _{\infty}\le \|\nabla V\|_{\infty} + \|\nabla W \|_{\infty} \|\rho(t)\|_{1}$.
	Additionally,
	\begin{align}
            \label{sup:eq:second_one}
            \left\Vert \rho(t)\left[\chi_{\epsilon}^{\prime\prime}\left(\rho(t)\right)\right]^{1/2}\right\Vert ^{2} &= \int_{\Omega}\rho^{2}\chi_{\epsilon}^{\prime\prime}\left(\rho\right)\mathbf{1}_{\left\{ \left|\rho\right|>\epsilon\right\} }\, dx + \int_{\Omega}\rho^{2}\chi_{\epsilon}^{\prime\prime}\left(\rho\right)\mathbf{1}_{\left\{ \left|\rho\right|\le\epsilon\right\} }\, dx \nonumber \\
            &= \int_{\Omega}\frac{3\rho^{2}\left(\epsilon^{2}-\rho^{2}\right)}{2\epsilon^{3}}\mathbf{1}_{\left\{ \left|\rho\right|\le\epsilon\right\} }dx = \int_{\Omega} \frac{3\epsilon}{2} \, dx =  3\cdot 2^{d-1}\pi^d \epsilon,
	\end{align}
        where we used that for $\left|\rho\right|>\epsilon$, $\chi_{\epsilon}^{\prime\prime}\left(\rho\right)=0$ by construction.
	Therefore, if $y_{\epsilon}(t) \coloneq \int_{\Omega} \chi_{\epsilon}(\rho(x,t)) \, dx$, 
	\begin{equation*}
		\begin{split}
			\frac{d}{dt}y_{\epsilon}(t)  &\le K\epsilon \left(\|\nabla V\|_{\infty}^2 + 2\|\nabla V\|_{\infty}\|\nabla W\|_{\infty}\|\rho(t)\|_{1} + \|\nabla W\|_{\infty}^2\|\rho(t)\|_{1}^{2} \right) \\
			&\le K \epsilon \left(\|\nabla V\|_{\infty}^2 + 2\|\nabla V\|_{\infty}\|\nabla W\|_{\infty}y_{\epsilon}(t)  + \|\nabla W\|_{\infty}^2y_{\epsilon}(t)^{2} \right)
		\end{split}
	\end{equation*}
	for a constant $K > 0$.
	Applying the comparison principle, we get
	\begin{equation*}
            y_{\epsilon}(t) \le \frac{1}{\frac{1}{y_\epsilon(0) + \|\nabla V\|_{\infty}} - K\epsilon\|\nabla W\|_{\infty} t} - \|\nabla V\|_{\infty}, \quad t < \frac{1}{K\epsilon\|\nabla W\|_{\infty}}\frac{1}{y_{\epsilon}(0) +  \|\nabla V\|_{\infty}}.
	\end{equation*}
	First, notice that $y_{\epsilon}(t) \to \|\rho(t)\|_{1}$ as $\epsilon \to 0$.
	Therefore, taking the limit $\epsilon\rightarrow0$ yields
	\begin{equation*}
		\|\rho(t)\|_{1} \le \|\rho_{0}\|_{1},
	\end{equation*}
	for every $t\geq 0$, as required. 
\qquad\end{proof}

\Cref{sup:lem:L1} also establishes the nonnegativity of
$\rho$ indirectly.

\begin{corollary}\label{sup:cor:positivity}
    If $\rho \in C^{1}\left(0,\infty;C^{2}\left(\Omega\right)\right)$ is a solution of \eqref{sup:eq:mv_equation}, with $\rho_{0} \in \Pac$, then $\rho(t) \in \Pac$ for all $t \ge 0$. 
\end{corollary}

\begin{proof}
    Since $\int_{\Omega} \rho_{0}(x) \, dx = 1$, the normalization condition~\eqref{sup:eq:normalization_cond} is satisfied for $t>0$. 
    Applying \Cref{sup:lem:L1}, we have
    \begin{equation}
        1=\int_{\Omega} \rho(x,t)dx \le \|\rho(t)\|_{1} \le \|\rho_0\|_{1} = 1.
    \end{equation}
    Hence, $\|\rho(t)\|_{1}=1$. 
    But
    \begin{align*}
	1 &= \int_{\Omega} \rho(x,t) \, dx = \int_{\Omega} \rho \mathbf{1}_{\left\{ \rho\geq0\right\} } \, dx + \int_{\Omega} \rho\mathbf{1}_{\left\{ \rho<0\right\}}\, dx,\\
	1 &= \int_{\Omega} \left|\rho(x,t)\right| \, dx = \int_{\Omega} \rho\mathbf{1}_{\left\{ \rho\geq0\right\} } \, dx -\int_{\Omega}\rho\mathbf{1}_{\left\{ \rho<0\right\} } \, dx.
    \end{align*}
    These equations imply that $\int_{\Omega} \rho\mathbf{1}_{\left\{ \rho<0\right\} }dx=0$, and hence, $\rho(t)\geq0$ a.e. in $\Omega$. 
	By continuity, $\rho(t)\geq0$ in $\Omega$ for all $t\geq0$. 
\end{proof}

\subsection{Iterative scheme and regularity}

To construct a weak solution of the McKean-Vlasov equation, we first define a sequence of linear and local parabolic problems whose solutions we will prove later converge to the solution of the nonlinear dynamics. 
This iterative scheme replaces the nonlocal term $\nabla v[\rho]$ in each step with the corresponding quantity from the previous iterate, yielding a family $(\rho_n)_{n \ge 1}$ of smooth periodic solutions. 
In this subsection, we establish the existence of these approximations and derive useful uniform $L^1/L^2$ and $H^1/H^2$ bounds.

Let $T > 0$ and consider a sequence of linear parabolic equations 
\begin{equation}
    \label{sup:eq:sequence_evolution}
    \partial_t \rho_{n} - \sigma \Delta \rho_{n} = \nabla \cdot \left(\rho_{n} \nabla v[\rho_{n-1}] \right), \quad \text{in} \; \Omega \times (0,T),
\end{equation}
with periodic boundary conditions on $\Omega$ and $\rho_n(\cdot, 0) = \rho_0$ for all $n \ge 1$, and $\rho_0(\cdot, t) = \rho_0$ for all $t > 0$.
Assume initially that $\rho_0 \in C^{\infty}(\Omega) \cap \Pac$.
For the next lemma, consider the operator $\mathcal{G}_{\rho} y \coloneq \nabla \cdot \bigl(y \nabla v[\rho]\bigr)$ for a fixed $\rho 
 \in \Pac$.

\begin{lemma}
\label{sup:lem:G_rel_bound}
    For every $y \in D(\Delta) = H^2(\Omega)$ and $\epsilon > 0$, there exists a constant $C_\epsilon > 0$ such that
    \begin{equation}
        \label{sup:eq:G_bound}
        \|\mathcal{G}_{\rho} y\| \le \epsilon\,\bigl(\|\nabla V\|_{\infty} + \|\nabla W\|_{\infty} \|\rho\|_{1}\bigr) \|\Delta y\|
        + C_\epsilon\,\|y\|.
    \end{equation}
    In particular, $\mathcal{G}_{\rho}$ is $\Delta$-bounded for all $\epsilon > 0$.
\end{lemma}

\begin{proof}
    Let $y \in H^2(\Omega)$ and $\epsilon > 0$.
    By the product rule and Hölder’s inequality,
    \[
    \|\mathcal{G}_{\rho} y\| \le \|\nabla v[\rho]\|_{\infty} \|\nabla y\| + \|\Delta v[\rho]\|_{p} \|y\|_q, \quad \frac{1}{p} + \frac{1}{q} = \frac{1}{2}.
    \]
    From the assumptions on $V$ and $W$, and Young's convolution inequality,
    \begin{align*}
        \| \nabla v[\rho] \|_{\infty} &\le M_{\infty} \coloneq \|\nabla V\|_{\infty} + \|\nabla W\|_{\infty} \|\rho\|_{1}, \\
        \| \Delta v[\rho] \|_{p} &\le M_p \coloneq \|\Delta V\|_{p} + (2\pi)^{d/p}\|\Delta W\|_{\infty} \|\rho\|_{1}.
    \end{align*}
    By using Gagliardo--Nirenberg interpolation inequality and $p = \max\{d, 2+\epsilon\}$, 
    \[
    \|y\|_q \le C\|\nabla y\|_{2}^{n/p} \|y\|_2^{1-n/p} + C\|y\|_2 \le \frac{Cn}{p} \|\nabla y \|_2 + C\left(2+\frac{n}{p}\right)\|y\|_2.
    \]
    Using Cauchy-Schwarz and Young's inequality for products,
    \[
    \|\nabla y\| \le \epsilon \|\Delta y\| + \frac{1}{4\epsilon}\|y\| \implies \|\mathcal{G}_{\rho} y\| \le \epsilon\left(M_{\infty} + M_p \frac{Cn}{p}\right)\|\Delta y\| + C_\epsilon\,\|y\|,
    \]
    with
    \[
    C_\epsilon \coloneq M_p C \left(2 + \frac{n}{p}\right) + \frac{M_{\infty} + M_p Cn/p}{4\epsilon}.
    \]
This is precisely~\eqref{sup:eq:G_bound}. Since $\epsilon>0$ is arbitrary, the relative bound can be made arbitrarily small.
\end{proof}

\begin{proposition}
\label{sup:prop:analytic_sg}
    For any $\sigma > 0$, the operator $\sigma\Delta + \mathcal{G}_{\rho}$ generates an analytic semigroup $T(t)$ on $L^2(\Omega)$. 
    Moreover, for every $\alpha \ge 0$ and $t>0$,
    \[
    T(t) : L^2(\Omega) \to D\bigl((\sigma\Delta+\mathcal{G}_{\rho})^\alpha\bigr),  \quad  \|(\sigma\Delta+\mathcal{G}_{\rho})^\alpha\,T(t)\|_{\mathbb{L}(L^2)} \le M_\alpha\,t^{-\alpha}.
    \]
    In particular, for each $k \in \mathbb{N}$,
    \[
    u_0 \in L^2(\Omega) \Longrightarrow u(t) = T(t)u_0 \in H^{2k}(\Omega), \quad  \|\Delta^k u(t)\| \le C_k t^{-k} \|u_0\|.
    \]
\end{proposition}

\begin{proof}
    The operator $\mathcal{G}_{\rho}$ is $\Delta$-bounded for any $\epsilon > 0$ by \Cref{sup:lem:G_rel_bound}. 
    Since $\Delta$ with periodic boundary conditions generates an analytic semigroup on $L^2(\Omega)$, \cite[Theorem 3.2.1]{pazy2012semigroups} implies that $\sigma \Delta + \mathcal{G}_{\rho}$ does as well. 
    The domain regularity and decay estimates for the semigroup follow from \cite[Theorem 2.6.13]{pazy2012semigroups}.
\end{proof}

\begin{corollary}
    \label{sup:cor:smooth_iterates}
    Let $\rho_0 \in C^\infty(\Omega)$, and define the sequence $(\rho_n)_{n \ge 1}$ by the iterative scheme~\eqref{sup:eq:sequence_evolution}. 
    Then, $\rho_n \in C^\infty(0,T; C^\infty(\Omega))$ for all $n \ge 1$.
\end{corollary}

\begin{proof}
    For $n=1$, \eqref{sup:eq:sequence_evolution} is a linear parabolic equation with coefficients in $C^\infty(\Omega)$, so \Cref{sup:prop:analytic_sg} implies $\rho_1 \in C^\infty((0,T); C^\infty(\Omega))$. The same reasoning applies inductively to each $n>1$ since $\rho_{n-1}$ is smooth and, therefore, $\nabla v[\rho_{n-1}]$ is smooth, preserving the regularity of the solution $\rho_n$.
\end{proof}

Given the sequence of smooth functions $(\rho_n)_{n \ge 1}$, we record two additional properties of $\rho_n$: an $L^1$-norm bound and preservation of the probability-density property whenever $\rho_0$ is a probability density.

\begin{proposition}
    \label{sup:prop:unif_L1_est}
    Let $T>0$ and suppose $(\rho_{n})_{n \ge 1}$ satisfy \eqref{sup:eq:sequence_evolution} with $\rho_{0}\in C^{\infty}(\Omega)$.
    Then, $\|\rho_{n}(t)\|_{1} \le \|\rho_0\|_{1}$ for all $0\le t \le T$ and $n \ge 1$. 
\end{proposition}

\begin{proof}
    Since we know that $\rho_{n}(t)\in C^{\infty}\left(0,T;C^{\infty}(\Omega)\right)$ for all $n \ge 1$ and all $0 \le t \le T$, we can proceed exactly as in the proof of \Cref{sup:lem:L1}.
    We obtain
    \begin{align*}
        \frac{d}{dt} \int_{\Omega} \chi_{\epsilon}(\rho_n(x,t)) \, dx \le K\epsilon &(\|\nabla V\|_{\infty}^2 + 2\|\nabla V\|_{\infty}\|\nabla W\|_{\infty}\|\rho_{n-1}(t)\|_{1} \\ 
        &+ \|\nabla W\|_{\infty}^2\|\rho_{n-1}(t)\|_{1}^{2} ).
    \end{align*}
By induction, suppose $\|\rho_{n-1}(t)\|_1 \le \|\rho_0
	\|_1$ (for $n=1$ this is trivially true), then we have that
	\begin{align*}
		\int_{\Omega}\chi_{\epsilon}(\rho_{n}(x,t)) \, dx \le \left(\int_{\Omega}\chi_{\epsilon}\left(\rho_{0}\left(x\right)\right)dx\right) + K\epsilon \left(\|\nabla V\|_{\infty}^2 + \|\nabla W\|_{\infty}\|\rho_0\|_{1} \right)^2 t	
	\end{align*}
	Hence we take the limit $\epsilon \to 0$ to obtain
	\begin{equation}
		\left\Vert \rho_{n}(t)\right\Vert _{1}\le\|\rho_0\|_{1},
	\end{equation}
	which concludes the induction argument.
\end{proof}

Using \Cref{sup:prop:unif_L1_est} and an identical proof of \Cref{sup:cor:positivity} leads to the following corollary.

\begin{corollary}
    \label{sup:cor:positivity_n}
    Let $T>0$ and suppose $(\rho_{n})_{n \ge 1}$ satisfy~\eqref{sup:eq:sequence_evolution} with $\rho_{0} \in C^{\infty}(\Omega) \cap \Pac$. 
    Then, $\rho_{n}(t) \in \Pac$ for all $0 \le t \le T$ and for all $n \ge 1$. 
\end{corollary}

The next two propositions then provide two a priori estimates.

\begin{proposition}
    \label{sup:prop:unif_L2_est}
    Under the settings of \Cref{sup:cor:positivity_n}, there exists a constant $C(T)>0$ such that 
    \begin{equation*}
        {\left\Vert \rho_{n} \right\Vert}_{L^\infty\left(0,T;L^2(\Omega)\right)} + {\left\Vert \rho_{n} \right\Vert}_{L^2(0,T;H^1(\Omega))} \le C(T)\|\rho_0\|.
    \end{equation*}
\end{proposition}

\begin{proof}
    We multiply \eqref{sup:eq:sequence_evolution} by $\rho_{n}$ and integrate by parts. 
    This gives us 
	\begin{align*}
		\frac{1}{2} \frac{d}{dt}\|\rho_{n}(t)\|^{2} + \sigma \|\nabla \rho_{n}(t)\| ^{2} 
		& \le \int_{\Omega}\left|\rho_{n}\nabla \rho_{n} \cdot \nabla v[\rho_{n-1}]\right|dx\nonumber \\
		& \le \|\nabla \rho_{n}(t)\|  \|\rho_{n}(t)\nabla v[\rho_{n-1}](t)\|  \nonumber \\
		& \le \frac{\sigma}{2}\|\nabla \rho_{n}(t)\| ^{2} +\frac{1}{2\sigma}\|\rho_{n}(t)\| ^{2}\|\nabla v[\rho_{n-1}](t)\| _{\infty}^{2} \nonumber \\
		 \le \frac{\sigma}{2}\|&\nabla \rho_{n}(t)\| ^{2} +\frac{1}{2\sigma}\|\rho_{n}(t)\| ^{2}\| \left(\|\nabla V\|_{\infty} + \|\nabla W\|_{\infty} \|\rho_{n-1}\|_1\right)^2 \nonumber \\
		&= \frac{\sigma}{2}\|\nabla \rho_{n}(t)\| ^{2} +\frac{1}{2\sigma}\|\rho_{n}(t)\| ^{2}\| \left(\|\nabla V\|_{\infty} + \|\nabla W\|_{\infty}\right)^2,
	\end{align*}
	which implies
	\begin{equation*}
		\frac{d}{dt} \| \rho_{n}(t)\|^{2} + \sigma \| \nabla \rho_{n}(t)\|^{2} \le \frac{\left(\|\nabla V\|_{\infty} + \|\nabla W\|_{\infty} \right)^2}{\sigma} \|\rho_{n}(t)\|^{2}, 
	\end{equation*}
	and by Grönwall's lemma,
        \begin{equation}
            \label{sup:eq:rhonCub}
            \|\rho_{n}(t)\| ^{2} \le \exp\left\{ R t\right\}\|\rho_0\|^{2},
	\end{equation}
	for all $0\le t\le T$, where we set $R \coloneq \frac{\left(\|\nabla V\|_{\infty} + \|\nabla W\|_{\infty} \right)^2}{\sigma}$, and
	\begin{equation}
		\begin{split}
			\int_{0}^{T} \|\rho_{n}(t)\|^{2} + \| \nabla \rho_{n}(t)\|^{2} \, dt \le \left(\frac{R+\sigma}{\sigma}\frac{e^{RT} - 1}{R} + 1\right) \|\rho_0\|^2,
		\end{split}
	\end{equation}
	which combined with \eqref{sup:eq:rhonCub} gives the result.
\end{proof}

\begin{proposition}
	\label{sup:prop:unif_L2_est_higher}
	Under the settings of \Cref{sup:cor:positivity_n}, there exists a constant $C(T)>0$ such that 
	\begin{equation*}
	{\left\Vert \rho_{n} \right\Vert}_{L^\infty\left(0,T;H^1(\Omega)\right)} + {\left\Vert \rho_{n} \right\Vert}_{L^2(0,T;H^2(\Omega))} \le C(T) \|\rho_0\|_{H^1(\Omega)}.
	\end{equation*}
\end{proposition}

\begin{proof}
    Let $C_1(T)$ be the constant given by~\Cref{sup:prop:unif_L2_est}.
    Multiplying~\eqref{sup:eq:sequence_evolution} by $-\Delta\rho_{n}$ and integrating by parts over $\Omega$, it follows from Cauchy-Schwarz and Young's inequality, that
    \begin{align*}
        \frac{1}{2}\frac{d}{dt}\|\nabla &\rho_{n}(t)\|^{2} + \sigma \|\Delta \rho_{n}(t)\|^{2} \le \int_{\Omega} \left|\nabla \cdot \left(\rho_{n} \nabla v[\rho_{n-1}]\right) \Delta \rho_{n} \right|dx  \\
        & \le \frac{\sigma}{2}\|\Delta \rho_{n}(t)\|^{2} + \frac{1}{2\sigma}\|\rho_n(t) \Delta v[\rho_{n-1}]\|^{2} + \frac{1}{2\sigma}\|\nabla \rho_n(t) \cdot \nabla v[\rho_{n-1}]\|^{2} \nonumber \\
        & \le \frac{\sigma}{2}\|\Delta \rho_{n}(t)\|^{2} + \frac{1}{2\sigma}\|\Delta v[\rho_{n-1}]\|_{p}^2\|\rho_n(t)\|_q^{2} + \frac{1}{2\sigma} \|\nabla v[\rho_{n-1}]\|_{\infty}^2 \|\nabla \rho_n(t)\|^{2} \nonumber \\
        &\le \frac{\sigma}{2}\|\Delta \rho_{n}(t)\|^{2} + \frac{R'}{2}\|\rho_n(t)\|_q^{2} + \frac{R}{2}\|\nabla \rho_n(t)\|^{2} \nonumber \\
        &\le \frac{\sigma}{2}\|\Delta \rho_{n}(t)\|^{2} + C\frac{R'}{2}\|\rho_n(t)\|_{H^1(\Omega)}^{2} + \frac{R}{2}\|\nabla \rho_n(t)\|^{2}, \nonumber
    \end{align*}
    with $R' = \frac{\left(\|\Delta V\|_{p} + (2\pi)^{d/p}\|\Delta W\|_{p}\right)^2}{\sigma}$ and applying Gagliardo–Nirenberg for the last inequality.
    It follows that
    \begin{align*}
         \frac{d}{dt}\|\nabla \rho_{n}(t)\|^{2} + \sigma\|\Delta \rho_{n}(t)\|^{2} &\le (R + R'C)\|\nabla \rho_{n}(t)\|^{2} + R'C\|\rho_{n}(t)\|^{2} \\
         &\le (R + R'C)\|\nabla \rho_{n}(t)\|^{2} + R'C e^{Rt}\|\rho_0\|^{2}.
    \end{align*}
    Grönwall's lemma gives for, $0 \le t \le T$,
    \[
    \|\nabla \rho_{n}(t)\|^{2} \le e^{(R+R'C) t}\Bigl(\|\nabla \rho_0\|^{2} + R'C\|\rho_{0}\|^{2} \frac{e^{Rt} - 1}{R+R'C}\Bigr).
    \]
    Hence
    \[
    \sup_{0\le t\le T}\|\nabla\rho_{n}(t)\|
    \le C_2(T)\,\bigl(\|\nabla\rho_{0}\|+\|\rho_{0}\|\bigr)
    \]
    for a constant $C_2(T)$.
    Next, integrate the inequality
    \[
    \frac{d}{dt}\|\nabla\rho_{n}\|^{2} +\sigma\|\Delta\rho_{n}\|^{2} \le (R+R'C)\,\|\nabla\rho_{n}\|^{2}+R'C\|\rho_{n}\|^{2}
    \]
    over \([0,T]\).
    Using the bounds just obtained and \(\|\rho_{n}(t)\|^{2} \le e^{Rt}\|\rho_{0}\|^{2}\), one finds
    \begin{align*}
        \int_{0}^{T}\|\Delta\rho_{n}(t)\|^{2}\,dt &\le \frac{R+R'C}{\sigma}\int_{0}^{T}\|\nabla\rho_{n}\|^{2}dt + \frac{R'C}{\sigma}\int_{0}^{T}\|\rho_{n}\|^{2} \, dt + \frac{1}{\sigma}\|\nabla\rho_{0}\|^{2} \\
        &\le C_{3}^2(T)\,\bigl(\|\nabla\rho_{0}\|^{2}+\|\rho_{0}\|^{2}\bigr)
    \end{align*}
    for a constant $C_3(T) > 0$.

    Using the fact that $\|\Delta u\|_2^2 = \sum_{i,j} \|\partial_{x_i,x_j} u\|_2^2$ and combining these two estimates with \Cref{sup:prop:unif_L2_est}, we obtain
    \[
    \|\rho_{n}\|_{L^{\infty}(0,T;H^1(\Omega))} + \|\rho_{n}\|_{L^{2}(0,T;H^2(\Omega))} \le C(T)\|\rho_{0}\|_{H^1(\Omega)},
    \]
    as claimed.  
    This completes the proof.
\end{proof}

\subsection{Convergence}

With the uniform estimates above, we can now show that $\rho_{n}$ converges strongly to a limit in $L^1(0, T; L^1(\Omega))$ and weakly in $L^2(0, T; H^1(\Omega))$. 

\begin{lemma}
    \label{sup:lem:Cauchy}
    Under the settings of \Cref{sup:cor:positivity_n}, there exists a limit function $\rho \in L^{1}(0,T; L^{1}(\Omega))$ such that $\rho_{n} \rightarrow \rho$ in $L^{1}\left(0,T;L^{1}(\Omega)\right)$.
\end{lemma}

\begin{proof}
	We set $\phi_{n}=\rho_{n} - \rho_{n-1}$ for $n\geq 1$. 
	For $n\geq 2$, the evolution equation for $\phi_{n}$ reads as
	\begin{equation*}
            \partial_t \phi_{n} - \sigma \Delta \phi_{n} = \nabla \cdot   \left(\phi_{n}\nabla v[\rho_{n-1}]+\rho_{n-1} \nabla W * \phi_{n-1}\right).
	\end{equation*}
        Let $\epsilon > 0$. 
        Multiplying the equation above by $\chi^{\prime}_{\epsilon} \left(\phi_{n}\right)$ (see~\eqref{sup:eq:chi_def}) and integrating by parts yields
    \begin{align}
        \label{sup:eq:phi_n_main1}
        \frac{d}{dt} \int_{\Omega} &\chi_{\epsilon}\left(\phi_{n}\right) dx
        + \sigma \left\|\left[\chi^{\prime\prime}_{\epsilon}\left(\phi_n(t)\right)\right]^{1/2} \nabla \phi_{n}(t)\right\|^{2} \\ 
        &\le \int_{\Omega} |\chi^{\prime\prime}_{\epsilon}\left(\phi_n\right) \phi_{n} \nabla \phi_{n} \cdot \nabla v[\rho_{n-1}]|dx + \int_{\Omega}|\chi^{\prime}_{\epsilon}\left(\phi_n\right) \nabla \cdot \left(\rho_{n-1} \nabla W * \phi_{n-1}\right)|dx\nonumber \\
        &\le \sigma \left\|\left[\chi^{\prime\prime}_{\epsilon}\left(\phi_n(t)\right)\right]^{1/2}\nabla \phi_{n}(t)\right\|^{2} + \frac{1}{4\sigma} \|\nabla v[\rho_{n-1}](t)\|^{2}_{\infty}
        \left\|\left[\chi^{\prime\prime}_{\epsilon}\left(\phi_n(t)\right)\right]^{1/2} \phi_{n}(t)\right\|^2 \nonumber \\
        &\quad + \int_{\Omega}|\chi^{\prime}_{\epsilon}\left(\phi_n\right) \nabla \cdot \left(\rho_{n-1} \nabla W * \phi_{n-1}\right)|dx \nonumber.
    \end{align}
    By \Cref{sup:cor:positivity_n}, $\|\nabla v[\rho_{n-1}](t)\|_{\infty} \le \sigma R\|\rho_{n-1}(t)\|_{1} = \sigma R$. 
    Also, similar to \eqref{sup:eq:second_one} from the proof of \Cref{sup:lem:L1}, we have 
    \begin{equation*}
        \|\left[\chi^{\prime\prime}_{\epsilon} \left(\phi_n(t)\right)\right]^{1/2}\phi_{n}(t)\|^{2} \le C \epsilon,
    \end{equation*}
    for some constant $C > 0$.
    To estimate the last integral term in~\eqref{sup:eq:phi_n_main1}, observe that $|\chi^{\prime}_{\epsilon}| \le \max\{2, 1+3\epsilon\}/2$, then
    \begin{align*}
        \int_{\Omega}|\chi^{\prime}_{\epsilon}&\left(\phi_n\right) \nabla \cdot (\rho_{n-1} \nabla W * \phi_{n-1})| \, dx \\ 
        &\le K \int_{\Omega}|\nabla \rho_{n-1} \cdot \nabla W * \phi_{n-1}| \, dx + K\int_{\Omega}|\rho_{n-1} \Delta W * \phi_{n-1} | \, dx \\
        &\le K (2\pi)^{d/2} \|\nabla W\|_{\infty} \|\phi_{n-1}\|_1 \| \nabla \rho_{n-1} \| + K (2\pi)^{d/2} \|\Delta W\|_{\infty} \|\phi_{n-1} \|_1 \|\rho_{n-1}\| \\
        &\le K (2\pi)^{d/2} (\|\nabla W\|_{\infty} + \|\Delta W\|_{\infty}) \|\phi_{n-1} \|_1 \|\rho_{n-1}\|_{H^1(\Omega)}.
    \end{align*}
    Hence, it follows that, as $\epsilon \to 0$, \eqref{sup:eq:phi_n_main1} becomes 
    \begin{align}
        \label{sup:eq:phi_n_main2}
        \frac{d}{dt} \|\phi_{n}(t) \|_{1} &\le K (2\pi)^{d/2} (\|\nabla W\|_{\infty} + \|\Delta W\|_{\infty}) \|\phi_{n-1}\|_1 \|\rho_{n-1}\|_{H^1(\Omega)} \\
        &\le C\left(\rho_0 ;1,T\right) \|\phi_{n-1} (t) \|_{1} \nonumber.
    \end{align}
    In the last line, we used \Cref{sup:prop:unif_L2_est_higher} and the shorthand 
    \begin{equation*}
        C\left(\rho_0 ;1,T\right) \coloneq C(T) K (2\pi)^{d/2} (\|\nabla W\|_{\infty} + \|\Delta W\|_{\infty}) \|\rho_0\|_{H^{1}(\Omega)}.
    \end{equation*}
    Now, for $N \ge 2$ we define $y_{N}(t) \coloneq \sum_{n=2}^{N}\|\phi_{n}(t)\|_{1}$.
    By~\eqref{sup:eq:phi_n_main2} and \Cref{sup:cor:positivity_n},
    \begin{equation*}
        \frac{d}{dt}y_{N}(t) \le C\left(\rho_0; 1, T\right) \left(y_N(t) + \|\phi_{1}(t)\|_{1} - \|\phi_{N}(t)\|_{1} \right) \nonumber \le C\left(\rho_0 ;1,T\right) \left(y_N(t) + 4 \right). 
    \end{equation*}
    Moreover, $y_{N}(0) = 0$.
    Thus, by Gr\"onwall's inequality,
    \begin{equation*}
        y_{N}(t) \le 4 (e^{C\left(\rho_0 ;1,T\right) T} - 1),
    \end{equation*}
    uniformly in $N$ and $t$.
    Furthermore, for each $t$, $y_{N}(t)$ is a bounded monotone sequence in $N$, hence there exists 
    \begin{equation*}
        y_{\infty}(t) = \sum_{n=2}^{\infty}\|\phi_{n}(t)\|_{1}\le 4 (e^{C\left(\rho_0 ;1,T\right) T} - 1),
    \end{equation*}
    such that $y_{N}(t) \uparrow y_{\infty}(t)$ pointwise in $t$.
    By the monotone convergence theorem, 
    \begin{equation*}
        \int_{0}^{T}y_{N}(t) \, dt \uparrow \int_{0}^{T} y_{\infty}(t)\, dt \le 4 T (e^{C\left(\rho_0 ;1,T\right) T} - 1).
    \end{equation*}
    This result implies that $(\rho_{n})_{n \ge 1}$ is a Cauchy sequence in $L^{1}(0,T;L^{1}(\Omega))$.
    Indeed, for any $\epsilon > 0$ we can pick $N \ge 2$ such that $\int_{0}^{T} y_{\infty}(t) \, dt - \int_{0}^{T} y_{N}(t) \, dt < \epsilon$. 
    Hence, for all $M\geq 1$, 
    \begin{align*}
        \|\rho_{N+M}-\rho_{N}\|_{L^{1}(0,T;L^{1}(\Omega))} 
        &= \int_{0}^{T}\|\rho_{N+M}(t)-\rho_{N}(t)\|_{1} \, dt = \int_{0}^{T}\left\|\sum_{n=N+1}^{N+M} \phi_{n}(t)\right\|_{1} \, dt \\
        &\le \int_{0}^{T}\sum_{n=N+1}^{N+M} \|\phi_{n}(t)\|_{1} \, dt = \int_{0}^{T}y_{N+M}(t) \, dt - \int_{0}^{T} y_{N}(t) \, dt \\
        &\le \int_{0}^{T} y_{\infty}(t) \, dt - \int_{0}^{T} y_{N}(t) \, dt \le\epsilon.
    \end{align*}
    Therefore, $(\rho_{n})_{n \ge 1}$ is a Cauchy sequence and there exists $\rho \in L^{1}(0,T;L^{1}(\Omega))$ such that $\rho_{n}\rightarrow\rho$ in $L^{1}(0,T;L^{1}(\Omega))$. 
\end{proof}

The next result states that we can extract from $(\rho_{n})_{n \ge 1}$ a subsequence that converges weakly in smaller spaces. 

\begin{lemma}
    \label{sup:lem:weakconv}
    We have $\rho \in L^{2}(0,T;H^{1}(\Omega)) \cap L^{\infty}(0,T;L^{2}(\Omega))$ satisfying $\partial_t \rho \in L^{2}(0,T;H^{-1}(\Omega))$ and
    \begin{equation*}
        \|\rho\|_{L^{\infty}(0,T;L^{2}(\Omega))} + \|\rho\|_{L^{2}(0,T;H^{1}(\Omega))} + \|\partial_t\rho\|_{L^{2}(0,T;H^{-1}(\Omega))} \le C(T) \|\rho_0\| .
    \end{equation*}
    Moreover, there exists a subsequence $(\rho_{n_{k}})_{k \ge 1}$ such that 
    \[
    \rho_{n_{k}} \rightharpoonup \rho \text{ in } L^{2}(0,T;H^{1}(\Omega)) \quad \text{ and } \quad \partial_t \rho_{n_{k}} \rightharpoonup \partial_t \rho \text{ in } L^{2}(0,T;H^{-1}(\Omega)).
    \]
\end{lemma}

\begin{proof}
    From \Cref{sup:prop:unif_L2_est}, we have 
    \begin{equation}
        \|\rho_n\|_{L^{\infty}(0,T;L^{2}(\Omega))} + \|\rho_n\|_{L^{2}(0,T;H^{1}(\Omega))} \le C(T)\|\rho_0\|.
    \end{equation}
    Next, from the evolution equation of $\rho_{n}$, we have $\partial_t \rho_{n} = \nabla \cdot \left(\nabla v[\rho_{n-1}]\rho_{n} + \sigma \nabla \rho_{n}\right)$.
    Hence, 
    \begin{equation*}
    \begin{split}
        \|\partial_t \rho_n\|_{L^2(0, T; H^{-1}(\Omega))}^2 &\le \int_0^T \|\nabla v[\rho_{n-1}](t) \rho_n(t) + \sigma \nabla \rho_n(t)\|^2 \, dt \\
        &\le 2\int_0^T \|\nabla v[\rho_{n-1}](t) \rho_n(t)\|_2^2 \, dt + 2\sigma \int_0^T \|\nabla \rho_n(t)\|_2^2 \, dt \\
        &\le 2R^2\sigma^2\int_0^T \|\rho_n(t)\|^2 \, dt + 2\sigma \int_0^T \|\nabla \rho_n(t)\|^2 \, dt \\
        &\le K \|\rho_n\|_{L^2(0,T; H^1(\Omega))}
    \end{split}
    \end{equation*}
    for some constant $K > 0$.
    Therefore, we have the uniform estimate 
    \begin{equation}
        \label{sup:eq:unif_3_ineq}
        \|\rho_{n}\|_{L^{\infty}(0,T;L^{2}(\Omega))} + \|\rho_{n}\|_{L^{2}(0,T;H^{1}(\Omega))} + \|\partial_t \rho_{n}\|_{L^{2}(0,T;H^{-1}(\Omega))}\le C(T)(1+K)\|\rho_0\| .
    \end{equation}
    Bounded sequences in reflexive Banach spaces guarantee the existence of a subsequence $(\rho_{n_k})_{k \ge 1}$ such that 
    \begin{equation*}
        \begin{cases}
            \rho_{n_{k}}\rightharpoonup\rho & \mbox{in }L^{2}(0,T;H^{1}(\Omega)), \\
            \partial_t \rho_{n_{k}} \rightharpoonup \partial_t \rho & \mbox{in }L^{2}(0,T;H^{-1}(\Omega)).
        \end{cases}
    \end{equation*}
    By the lower-semicontinuity of the norm, $\rho \in L^{2}(0,T;H^{1}(\Omega)) \cap L^{\infty}(0,T;L^{2}(\Omega))$, with $\partial_t \rho \in L^{2}(0,T;H^{-1}(\Omega))$ and they satisfy the same estimate of~\eqref{sup:eq:unif_3_ineq}.
\end{proof}

Following \cite[Theorem 3, Section 5.9]{evans2022partial}, we can deduce from \Cref{sup:lem:weakconv} the following result:

\begin{theorem}
    \label{sup:thm:Evans}
    Suppose $\rho\in L^{2}(0,T;H^{1}(\Omega))$
    with $\partial_t \rho \in L^{2}(0,T;H^{-1}(\Omega))$,
    then up to a set of measure zero $\rho\in C(0,T;L^{2}(\Omega))$. 
    Further, the mapping
    \begin{equation*}
        t\mapsto\|\rho(t)\|^{2}
    \end{equation*}
    is absolutely continuous, with 
    \begin{equation*}
        \frac{d}{dt}\|\rho(t)\|^{2} = 2 \langle \partial_t \rho(t), \rho(t)\rangle_{H^{-1}, H^{1}}, \quad \text{a.e. } 0 \le t\le T.
    \end{equation*}
\end{theorem}

\begin{proof}
    The proof is identical to the proof in \cite{evans2022partial}. 
    The only difference here is that we consider $H^{1}$ and $H^{-1}$, instead of $H_{0}^{1}$ and $H^{-1}$.
    Since periodic conditions still guarantee integration by parts without boundary terms, all proofs follow through. 
\qquad\end{proof}

\subsection{Existence and uniqueness of weak solutions}

Now, we are ready to prove the existence of a weak solution to~\eqref{sup:eq:mv_equation}.

\begin{theorem}[Existence and uniqueness]
    \label{sup:thm:existence}
    Let $\rho_{0}\in C^{\infty}(\Omega) \cap \Pac$. 
    Then, there exists a unique weak solution $\rho$ to equation~\eqref{sup:eq:mv_equation} with the estimate 
    \[
    \|\rho\|_{L^{\infty}(0,T;L^{2}(\Omega))} + \|\rho\|_{L^{2}(0,T;H^{1}(\Omega))} + \|\partial_t \rho\|_{L^{2}(0,T;H^{-1}(\Omega))} \le C(T)\|\rho_0\|.
    \]
\end{theorem}

\begin{proof}
    Let $(\rho_n)_{n \ge 1}$ be a sequence that converges weakly to $\rho$ in $L^2(0, T; H^1(\Omega))$.
    For each $\eta \in L^{2}(0,T;H^{1}(\Omega))$, we multiply equation~\eqref{sup:eq:sequence_evolution} by $\eta$ and integrate over $\Omega_{T}$ to obtain
    \begin{equation*}
        \begin{split}
            \int_{0}^{T} \left\langle \partial_t \rho_{n}(t), \eta(t)\right\rangle \, dt &+ \sigma \int_{0}^{T} \int_{\Omega} \nabla \rho_{n}(t) \cdot \nabla \eta(t) \, dx \, dt \\
            &+ \int_{0}^{T} \int_{\Omega} \rho_{n}(t) \nabla v[\rho_{n-1}](t) \cdot \nabla \eta(t) \, dx \, dt = 0.
        \end{split}
    \end{equation*}
    Notice that 
    \begin{equation}
        \label{sup:eq:conv0}
        \begin{split}
            \int_0^T \int_{\Omega} \rho_n \nabla v[\rho_{n-1}] &\cdot \nabla \eta \, dx\,dt = \int_0^T \int_{\Omega} (\rho_{n} - \rho) \nabla v[\rho_{n-1}] \cdot \nabla \eta \, dx \, dt \\
            &+ \int_0^T \int_{\Omega} \rho \nabla v[\rho] \cdot \nabla \eta \, dx \, dt + \int_0^T \int_{\Omega} \rho \nabla v[\rho_{n-1}] \cdot \nabla \eta \\ 
            &- \int_0^T \int_{\Omega} \rho \nabla v[\rho] \cdot \nabla \eta \, dx \, dt \\
            &= \int_0^T \int_{\Omega} (\rho_{n} - \rho) \nabla v[\rho_{n-1}] \cdot \nabla \eta \, dx \, dt + \int_0^T \int_{\Omega} \rho \nabla v[\rho] \cdot \nabla \eta \, dx \, dt \\
            &+ \int_0^T \int_{\Omega} \rho \nabla W * (\rho_{n-1} - \rho) \cdot \nabla \eta \, dx \, dt.
        \end{split}
    \end{equation}
    From \Cref{sup:lem:Cauchy}, we know that $\rho_n \to \rho$ in $L^1(0, T; L^1(\Omega))$, so 
    \[
    \|\nabla v[\rho_{n}] - \nabla v[\rho]\|_{L^\infty(0, T; L^\infty(\Omega))}\le \|\nabla W\|_{\infty} \|\rho_n - \rho\|_{L^1(0, T; L^1(\Omega))} \to 0
    \]
    so that 
    \begin{align*}
        \int_0^T \int_{\Omega} |(\nabla v[\rho_{n-1}] - \nabla v[\rho]) &\cdot \nabla \eta|^2 \, dx \, dt \\ 
        &\le \|\nabla v[\rho_{n}] - \nabla v[\rho]\|^2_{L^\infty(0, T; L^\infty(\Omega))} \|\eta\|^2_{L^2(0, T; H^1(\Omega))} \to 0.
    \end{align*}
    Thus, a weak convergence of $\rho_n$ plus a strong convergence of $\nabla v[\rho_{n-1}] \cdot \nabla \eta$ implies
    \[
    \int_0^T \int_{\Omega} (\rho_n - \rho) \nabla v[\rho_{n-1}] \cdot \nabla \eta \, dx \, dt \to 0.
    \]
    Also, 
    \begin{align*}
        \Bigl|\int_0^T \int_{\Omega} \rho \nabla W * &(\rho_{n-1} - \rho) \cdot \nabla \eta \, dx \, dt \Bigr| \\
        &\le \|\nabla W\|_{\infty} \|\rho_{n-1} - \rho\|_{L^1(0, T; L^1(\Omega))} \|\rho\|_{L^2(0, T; L^2(\Omega))} \|\eta\|_{L^2(0,T; H^1(\Omega))},
    \end{align*}
    which also converges to $0$ when $n \to \infty$.
    
    Taking the limit ($n \to \infty$) on \eqref{sup:eq:conv0} leads to
    \begin{equation}
        \label{sup:eq:conv_1}
        \int_{0}^{T} \int_{\Omega} \rho_{n} \nabla v[\rho_{n-1}] \cdot \nabla \eta \, dx \, dt \rightarrow \int_{0}^{T} \int_{\Omega} \rho \nabla v[\rho] \cdot \nabla \eta \, dx \, dt.
    \end{equation}
    By the weak convergence results established in \Cref{sup:lem:weakconv}, we also have
    \begin{align*}
        \label{sup:eq:conv_2}
        \int_{0}^{T} \left\langle \partial_t \rho_{n}(t), \eta(t)\right\rangle \, dt &\rightarrow \int_{0}^{T}\left\langle \partial_t \rho(t),\eta(t) \right\rangle \, dt, \\ \int_{0}^{T}\int_{\Omega} \nabla \rho_{n} \cdot \nabla \eta \, dx \, dt &\rightarrow \int_{0}^{T} \int_{\Omega} \nabla \rho \cdot \nabla \eta \, dx \, dt.
    \end{align*}
    Putting together~\eqref{sup:eq:conv_1} and~\eqref{sup:eq:conv_2}, we obtain in the limit $n \rightarrow \infty$, 
    \begin{equation}
        \label{sup:eq:weak_form}
        \int_{0}^{T} \left\langle \partial_t \rho(t),\eta(t)\right\rangle \, dt + \int_{0}^{T} \int_{\Omega} \left(\sigma \nabla \rho + \rho \nabla v[\rho]\right)\cdot \nabla \eta \, dx \, dt = 0, 
    \end{equation}
    for every $\eta \in L^{2}(0,T; H^{1}(\Omega))$.
    
    Finally, we have to show that $\rho(0) = \rho_0$. 
    Pick some $\eta \in C^1(0,T;H^1(\Omega))$ with $\eta(T) = 0$.
    Then, we have from~\eqref{sup:eq:weak_form} that 
    \begin{equation}
        \label{sup:eq:comp_1}
        -\int_{0}^{T} \left\langle \rho(t), \partial_t \eta(t)\right\rangle \, dt + \int_{0}^{T} \int_{\Omega} \left(\sigma \nabla \rho + \rho \nabla v[\rho]\right)\cdot \nabla \eta \, dx \, dt = \langle \rho(0), \eta(0) \rangle. 
    \end{equation}
    Similarly, we also have 
    \begin{equation}
        \label{sup:eq:comp_2}
        -\int_{0}^{T} \left\langle \rho_n(t), \partial_t \eta(t)\right\rangle \, dt + \int_{0}^{T} \int_{\Omega} \left(\sigma \nabla \rho_n+ \rho_n \nabla v[\rho_{n-1}]\right)\cdot \nabla \eta \, dx \, dt = \langle \rho_0, \eta(0) \rangle.
    \end{equation}
    Taking the limit $n \rightarrow \infty$ and comparing~\eqref{sup:eq:comp_1} to~\eqref{sup:eq:comp_2}, $\langle \rho(0), \eta(0) \rangle = \langle \rho_0, \eta(0) \rangle$.
    Since $\eta$ is arbitrary, we conclude that $\rho(0) = \rho_0$. 
    This completes the proof of the existence of a weak solution. 
    
    Now, we prove its uniqueness. 
    Let $\rho_{\dagger}$ and $\rho_{\ddagger}$ be weak solutions to~\eqref{sup:eq:mv_equation} and set $\xi = \rho_{\dagger} - \rho_{\ddagger}$.
    Then, for every $\eta \in L^{2}(0,T; H^{1}(\Omega))$, we have 
    \begin{align*}
        \int_{0}^{T}\left\langle \partial_t \xi (t),\eta(t) \right\rangle \, dt &+ \sigma \int_{0}^{T} \int_{\Omega} \nabla \xi \cdot \nabla \eta \, dx \, dt \\
        &+ \int_{0}^{T} \int_{\Omega} \left(\xi \nabla v[\rho_{\dagger}] + \rho_{\ddagger} \nabla W * \xi \right) \cdot \nabla \eta \,dx \, dt = 0.
    \end{align*}
    By the same renormalization argument from \Cref{sup:lem:L1}, $\|\rho_{\dagger}\|_1 \le 1$ and $\|\rho_{\ddagger}\|_1 \le 1$, which implies
    \begin{align*}
        \left|\int_{0}^{T}\int_{\Omega}\xi \nabla v[\rho_{\dagger}] \cdot \nabla \eta \, dx \, dt\right| &\le \|\nabla v[\rho_{\dagger}]\|_{L^{\infty}(0,T;L^{\infty}(\Omega))} \|\nabla \eta\|_{L^{2}(0,T;L^{2}(\Omega))} \|\xi\|_{L^{2}(0,T;L^{2}(\Omega))} \\
        &\le R\sigma \|\nabla \eta\|_{L^{2}(0,T;L^{2}(\Omega))}\|\xi\|_{L^{2}(0,T;L^{2}(\Omega))} \\
        &\le \frac{\sigma}{2}\|\nabla \eta\|_{L^{2}(0,T;L^{2}(\Omega))}^{2} + \frac{R^2 \sigma}{2} \|\xi\|_{L^{2}(0,T;L^{2}(\Omega))}^{2},
    \end{align*}
    and 
    \begin{align*}
        \Biggl|\int_{0}^{T}\int_{\Omega} \rho_{\ddagger} \nabla W * &\xi \cdot \nabla \eta \, dx \, dt\Biggr| \\
        &\le \|\rho_{\ddagger}\|_{L^{\infty}(0,T;L^{2}(\Omega))} \|\nabla \eta\|_{L^{2}(0,T;L^{2}(\Omega))} \|\nabla W * \xi \|_{L^{2}(0,T;L^{\infty}(\Omega))} \\
        &\le (2\pi)^{d/2} \|\rho_{\ddagger}\|_{L^{\infty}(0,T;L^{2}(\Omega))} \|\nabla \eta\|_{L^{2}(0,T;L^{2}(\Omega))} \|\nabla W\|_{\infty} \|\xi\|_{L^2(0, T; L^2(\Omega))} \\
        &\le C(T) \|\rho_0\| R \sigma (2\pi)^{d/2} \|\nabla \eta\|_{L^{2}(0,T;L^{2}(\Omega))}\|\xi\|_{L^{2}(0,T;L^{2}(\Omega))} \\
        &\le \frac{\sigma}{2}\|\nabla \eta\|_{L^{2}(0,T;L^{2}(\Omega))}^{2} + \frac{C(T)^2 \|\rho_0\|^2 R^2 \sigma (2\pi)^d}{2} \|\xi\|_{L^{2}(0,T;L^{2}(\Omega))}^{2},
    \end{align*}
    where the third inequality uses similar arguments from \Cref{sup:prop:unif_L2_est} to bound $\|\rho_{\ddagger}\|_{L^{\infty}(0,T; L^2(\Omega))}$.
    Grouping these inequalities, we obtain 
    \begin{equation*}
    \begin{split}
        \int_0^T \langle \partial_t \xi(t), \eta(t) \rangle \, dt &+ \sigma \int_0^T \int_{\Omega} \nabla \xi \cdot \nabla \eta \, dx \, dt \\
        &\le \sigma \|\nabla \eta\|_{L^2(0, T; L^2(\Omega))}^2 + (K_1 + K_2(T)\|\rho_0\|^2)\|\xi\|_{L^2(0,T; L^2(\Omega))},
    \end{split}
    \end{equation*}
    for some constants $K_1 > 0$ and $K_2(T) > 0$.
    
    Setting $\eta = \xi$ and applying \Cref{sup:thm:Evans} leads to 
    \begin{align*}
        \int_{0}^{T} \frac{1}{2}\frac{d}{dt}\|\xi(t)\|^{2} \, dt \le \left(K_1 + K_2(T)\|\rho_0\|^{2}\right) \int_0^T \|\xi(t)\|_2^{2} \, dt,
    \end{align*}
    which is equivalent to
    \[
    \|\xi(T)\|^2 - \|\xi(0)\|^2 \le 2(K_1 + K_2(T)\|\rho_0\|^2) \int_0^T \|\xi(t)\|^2 \, dt.
    \]
    Grönwall's inequality implies 
    \[
    \|\xi(T)\|^2 \le \|\xi(0)\|^2 K(T)
    \]
    for another constant $K(T) > 0$.
    But $\|\xi(0)\| = \|\rho_{0} - \rho_{0}\| = 0$, so $\rho_{\dagger}(t) = \rho_{\ddagger}(t)$ for all $t$.
    
    Finally, the energy estimate is from \Cref{sup:lem:weakconv}. 
\end{proof}

Throughout this section, we assumed that the initial condition is smooth. 
We can relax this condition to $\rho_{0} \in L^{2}(\Omega)$ by mollifying the initial data. 

\begin{theorem}
    \label{sup:thm:existence-1}
    Let $\rho_{0}\in L^{2}(\Omega) \cap \Pac$.
    Then, there exists a unique weak solution $\rho$ to equation~\eqref{sup:eq:mv_equation} with the estimate
    \[
    \|\rho\|_{L^{\infty}(0,T;L^{2}(\Omega))} + \|\rho\|_{L^{2}(0,T;H^{1}(\Omega))} + \|\partial_t \rho\|_{L^{2}(0,T;H^{-1}(\Omega))}\le C(T)\|\rho_0\|.
    \]
\end{theorem}

\begin{proof}
    Let $\epsilon>0$ and consider the modified problem of~\eqref{sup:eq:sequence_evolution}
    \begin{align}
        \label{sup:eq:sequence_evolution_mollify}
        \partial_t \rho_{n}^{\epsilon} - \sigma \Delta \rho_{n}^{\epsilon} = \nabla \cdot \left(\rho_{n}^{\epsilon} \nabla v[\rho_{n-1}^{\epsilon}] \right), \quad \text{in } \Omega \times (0,T),
    \end{align}
    with periodic boundary conditions on $\Omega$, and $\rho_{n}^{\epsilon}(\cdot, 0) = \rho^{\epsilon}_{0} \coloneq  \varphi_{\epsilon} * \rho_0$.
    Here,
    \[
    \varphi_\varepsilon(x):=\varepsilon^{-d}\sum_{k\in\mathbb{Z}^d}
    \varphi\!\left(\frac{x+k}{\varepsilon}\right)
    \]
    with $\varphi \in  C_c^\infty(\mathbb{R}^d)$, $\varphi \ge 0$, and $\|\varphi\|_1  = 1$, so that $\varphi_\epsilon \ge 0$ and $\|\varphi_\epsilon\|_{1}=1$.
    With the mollification, the function $\rho_{0}^{\epsilon}$ is smooth and we can apply \Cref{sup:thm:existence} to conclude that there exists a unique weak solution $\rho^{\epsilon} \in {L^{\infty}(0,T;L^{2}(\Omega))}\cap {L^{2}(0,T;H^{1}(\Omega))}$, with $\partial_t \rho^{\epsilon} \in {L^{2}(0,T;H^{-1}(\Omega))}$ to equation~\eqref{sup:eq:sequence_evolution_mollify} with the estimate 
    \begin{equation*}
        \|\rho^{\epsilon}\|_{L^{\infty}(0,T;L^{2}(\Omega))} + \|\rho^{\epsilon}\|_{L^{2}(0,T;H^{1}(\Omega))} + \|\partial_t \rho^{\epsilon}\|_{L^{2}(0,T;H^{-1}(\Omega))} \le C(T) \|\rho^{\epsilon}_{0}\|.
    \end{equation*}
    For all $\epsilon > 0$, $\|\rho_0^{\epsilon}\| \le \|\varphi_{\epsilon}\|_1 \|\rho_0\| = \|\rho_0\|$. 
    Hence, there exists a sequence $(\epsilon_k)_{k \ge 1}$, with
    \begin{eqnarray}
            \label{sup:eq:weak_conv_mollify}
        \begin{cases}
        \rho^{\epsilon_k} \rightharpoonup \rho & \mbox{in }L^{2}(0,T;H^{1}(\Omega)),\\
        \partial_t \rho^{\epsilon_k} \rightharpoonup \partial_t \rho & \mbox{in }L^{2}(0,T;H^{-1}(\Omega)),
        \end{cases}
    \end{eqnarray}
    as $k\rightarrow \infty$.
    Additionally, 
    \begin{equation}
        \|\rho\|_{L^{\infty}(0,T; L^{2}(\Omega))} + \|\rho\|_{L^{2}(0,T; H^{1}(\Omega))} + \|\partial_t \rho\|_{L^{2}(0,T; H^{-1}(\Omega))}\le C(T)\|\rho_0\|,
    \end{equation}
    so that $\rho\in {L^{\infty}(0,T;L^{2}(\Omega))}\cap {L^{2}(0,T; H^{1}(\Omega))}$, with $\partial_t \rho \in {L^{2}(0,T; H^{-1}(\Omega))}$.
    It remains to prove that $\rho$ is in fact a weak solution to~\eqref{sup:eq:mv_equation}. 
    
    We have that 
    \begin{equation*}
        \int_{0}^{T} \left\langle \partial_t \rho^{\epsilon_k}(t), \eta(t)\right\rangle \, dt + \sigma \int_{0}^{T} \int_{\Omega} \nabla \rho^{\epsilon_k} \cdot \nabla \eta \, dx \, dt + \int_{0}^{T}\int_{\Omega} \rho^{\epsilon_k} \nabla v[\rho^{\epsilon_k}] \cdot \nabla \eta \, dx \, dt = 0.
    \end{equation*} 
    Using~\eqref{sup:eq:weak_conv_mollify}, we can replace $\rho^{\epsilon_k}$ by $\rho$ in the first two integrals above in the limit $k \rightarrow \infty$. 
    Moreover, as in~\eqref{sup:eq:conv0}, 
    \begin{equation}
        \label{sup:eq:111}
        \begin{split}
            \int_0^T \int_{\Omega} \rho^{\epsilon_k} \nabla v[\rho^{\epsilon_k}] &\cdot \nabla \eta \, dx \, dt = \int_0^T \int_{\Omega} (\rho^{\epsilon_k} - \rho) \nabla v[\rho^{\epsilon_k}] \cdot \nabla \eta \, dx \, dt \\
            &+ \int_0^T \int_{\Omega} \rho \nabla v[\rho] \cdot \nabla \eta \, dx \, dt + \int_0^T \int_{\Omega} \rho \nabla W * (\rho^{\epsilon_k} - \rho) \cdot \nabla \eta \, dx \, dt.
        \end{split}
    \end{equation}
    
    Using similar arguments to those of \Cref{sup:thm:existence} and noticing that $\rho^{\epsilon_k} \to \rho$ strongly in $L^1(0, T; L^1(\Omega))$, we get that \eqref{sup:eq:111} follows the same convergence of \eqref{sup:eq:conv0}.
    Thus, we obtain
    \begin{equation*}
        \int_{0}^{T}\left\langle \partial_t\rho(t),\eta(t)\right\rangle \, dt + \sigma\int_{0}^{T} \int_{\Omega} \nabla\rho \cdot \nabla \eta \, dx \, dt + \int_{0}^{T} \int_{\Omega} \rho \nabla v[\rho] \cdot \nabla \eta \, dx \, dt=0.
    \end{equation*} 
    To show that $\rho(0) = \rho_0$, we again take $\eta \in C^1\left(0,T;H^1(\Omega)\right)$ with $\eta(T)=0$. 
    Since $\rho_0^{\epsilon_k} \rightarrow \rho_0$ in $L^2(\Omega)$, we have (cf. expressions~\eqref{sup:eq:comp_1} and~\eqref{sup:eq:comp_2}) 
    \begin{equation*}
        \langle \rho(0), \eta(0) \rangle = \langle \rho_{0}, \eta(0) \rangle.
    \end{equation*}
    Since $\eta$ is arbitrary, we have $\rho(0) = \rho_0$. 
    The uniqueness follows from exactly the same argument in the proof of \Cref{sup:thm:existence}. 
\qquad\end{proof}

\subsection{Higher Regularity\label{sup:sec:regularity}}

Now the goal is to improve the regularity of the weak solution to~\eqref{sup:eq:mv_equation}.
In this section, we assume that $V, W \in W^{k, \infty}(\Omega)$ for a sufficiently high $k \in \mathbb{N}$.
As in the previous section, we always mollify $\rho_{0}$ by $\varphi_{\epsilon}$ so that the resulting evolution equations~\eqref{sup:eq:sequence_evolution} admit smooth solutions.
This allows us to differentiate the equation as many times as required, and we take the limit $\epsilon \rightarrow 0$ at the end.
For simplicity of notation, we drop the $\epsilon$ superscripts on $\rho_n$ and implicitly perform the limit at the end. 

First, we prove a useful estimate. 

\begin{proposition}
    \label{sup:prop:induction_estimate}
    Let $k \in \mathbb{N}$, $\varphi \in H^{k}(\Omega)$ and $\rho \in L^1(\Omega)$.
    Assume $V, W \in W^{k+1, \infty}(\Omega)$.
    Then, for any multiindex $\alpha$ with $|\alpha| = k$,
    \begin{equation*}
        \|\partial^{\alpha}_x (\varphi \nabla v[\rho])\| \le C\|\varphi\|_{H^{k}(\Omega)},
    \end{equation*}
\end{proposition}

\begin{proof}
    Leibniz differentiation formula says that 
    \begin{align*}
        [\partial^{\alpha}_x (\varphi \nabla v[\rho])]_i &= \left[\sum_{|\beta|\le|\alpha|} \genfrac(){0pt}{0}{\alpha}{\beta} (\partial^{\beta}_x \varphi)(\partial_x^{\alpha-\beta}\nabla v[\rho])\right]_i \\ 
        &= \left[\sum_{|\beta|\le|\alpha|} \genfrac(){0pt}{0}{\alpha}{\beta} (\partial^{\beta}_x \varphi)(\partial_x^{\alpha-\beta+e_i}(V + W * \rho))\right]_i
    \end{align*}
    so that, applying the triangle inequality and H\"older's inequality yields
    \begin{equation*}
    \begin{split}
        \|\partial^{\alpha}_x (\varphi \nabla v[\rho])\| &\le \sum_{|\beta|\le|\alpha|} \genfrac(){0pt}{0}{\alpha}{\beta} \sum_{i=1}^d \|\partial^{\beta}_x \varphi\|_2\|\partial_x^{\alpha-\beta+e_i}(V + W * \rho)\|_{\infty} \\
        &\le \|\varphi\|_{H^k(\Omega)}\sum_{|\beta|\le|\alpha|} \genfrac(){0pt}{0}{\alpha}{\beta} \sum_{i=1}^d \left(\|\partial_x^{\alpha-\beta+e_i}V\|_{\infty} + \|\partial_x^{\alpha-\beta+e_i}W * \rho\|_{\infty}\right) \\
        &\le \|\varphi\|_{H^k(\Omega)} \sum_{|\beta|\le|\alpha|} \genfrac(){0pt}{0}{\alpha}{\beta} \sum_{i=1}^d\left(\|\partial_x^{\alpha-\beta+e_i}V\|_{\infty} + \|\partial_x^{\alpha-\beta+e_i}W\|_{\infty} \|\rho\|_{1}\right).
    \end{split}
    \end{equation*}
    Given that $\rho \in L^1(\Omega)$, we can set the constant
    \[
    C \coloneq \sum_{|\beta|\le|\alpha|} \genfrac(){0pt}{0}{\alpha}{\beta} \sum_{i=1}^d\left(\|\partial_x^{\alpha-\beta+e_i}V\|_{\infty} + \|\partial_x^{\alpha-\beta+e_i}W\|_{\infty} \|\rho\|_{1}\right).
    \]
\end{proof}

Now, we assume that $\rho_0 \in H^k(\Omega)$ for some $k \geq 0$ and prove the corresponding regularity of $\rho$. 

\begin{theorem}
    \label{sup:thm:x_regular} 
    Let $k \in \mathbb{N}$ and assume that $V, W \in W^{k+1, \infty}(\Omega)$ and $\rho_{0} \in H^{k}(\Omega) \cap \Pac$.
    Then the unique solution to~\eqref{sup:eq:mv_equation} satisfies 
    \begin{equation*}
        \rho \in L^{2}(0,T;H^{k+1}(\Omega)) \cap L^{\infty}(0,T;H^{k}(\Omega)),
    \end{equation*}
    with the estimate 
    \begin{equation*}
        \|\rho\|_{L^{2}(0,T; H^{k+1}(\Omega))} + \|\rho\|_{L^{\infty}(0,T; H^{k}(\Omega))} \le C\left(\rho_{0};k,T\right) \coloneq C(T) \|\rho_0\|_{H^{k}(\Omega)},
    \end{equation*}
    for a constant $C(T) > 0$.
\end{theorem}

\begin{proof}
    We prove the statements by proving uniform estimates on $\rho_{n}$
    by induction on $k$. 
    The base case $k=0$ is provided in \Cref{sup:prop:unif_L2_est}.
    The case $k=1$ is given in \Cref{sup:prop:unif_L2_est_higher}. 
    Suppose for some $k \geq 1$,
    \begin{equation}
        \label{sup:eq:inductive_hyp}
        \|\rho_{n}\|_{L^{2}(0,T;H^{k+1}(\Omega))} + \|\rho_{n}\|_{L^{\infty}(0,T;H^{k}(\Omega))} \le C\left(\rho_{0};k,T\right),
    \end{equation}
    for all $n$. 
    We apply $\partial^{\alpha}_x \coloneq \partial^{\alpha_1}_{x_1} \cdots \partial^{\alpha_d}_{x_d}$ with $|\alpha|=\alpha_1+\cdots+\alpha_d = k$ to equation \eqref{sup:eq:sequence_evolution} and multiply it by $-\Delta \partial_x^{\alpha} \rho_{n}$ and integrate over $\Omega$ to get
    \begin{align*}
        \frac{1}{2}\frac{d}{dt}\|\nabla \partial_x^{\alpha}\rho_{n}(t)\|^{2} + \sigma \|\Delta \partial_{x}^{\alpha}&\rho_{n}(t)\|^{2} \le \int_{\Omega}\left| \Delta \partial_{x}^{\alpha} \rho_{n}(t) \nabla \cdot \partial_{x}^{\alpha}\left(\rho_{n}\nabla v[\rho_{n-1}]\right)(t)\right|dx\nonumber \\
        &\le \frac{\sigma}{2}\|\Delta \partial_{x}^{\alpha}\rho_{n}(t)\|^{2} + \frac{1}{2\sigma}\|\nabla \cdot \partial_{x}^{\alpha}\left(\rho_{n}\nabla v[\rho_{n-1}]\right)(t)\|^{2}.
    \end{align*}
    Using \Cref{sup:prop:induction_estimate} with $\varphi = \rho_{n}(t)$ and $\rho = \rho_{n-1}(t)$, we have
    \[
    \frac{d}{dt}\|\nabla \partial_{x}^{\alpha}\rho_{n}(t)\|^{2} + \sigma\|\Delta \partial_{x}^{\alpha}\rho_{n}(t)\|^{2} \le K\|\rho_{n}(t)\|_{H^{k+1}(\Omega)}^{2}
    \]
    for a constant $K > 0$.
    Summing over $|\alpha| = k$ yields
    \[
    \frac{d}{dt} A_n(t) + \sigma B_n(t) \le \tilde{K} \|\rho_n(t)\|^2_{H^{k+1}(\Omega)},
    \]
    where $A_n(t) \coloneq \sum_{|\alpha|=k}\|\nabla\partial^\alpha\rho_n(t)\|_2^{2}$, $B_n(t) \coloneq \sum_{|\alpha|=k}\|\Delta\partial^\alpha\rho_n(t)\|_2^{2}$, and $\tilde{K} > 0$ is a constant.
    Integrating this equation from $0$ to $t$ leads to 
    \begin{align*}
        A_n(t) + \sigma \int_0^t B_n(s) \, ds &\le \tilde{K} \|\rho_n\|^2_{L^2(0,t; H^{k+1})} + A_n(0) \\
        &\le \tilde{K} \|\rho_n\|^2_{L^2(0,T; H^{k+1}(\Omega))} + d\sum_{\|\beta\| = k+1} \|\partial_x^{\beta} \rho_0\|^2,
    \end{align*}
    where we used the fact that 
    \[
    \sum_{\|\beta\| = k+1} \|\partial_x^{\beta} \rho_n(t)\|^2 \le A_n(t) = \sum_{|\alpha|=k} \sum_{i=1}^d \|\partial^{\alpha+e_i}_x \rho_n(t)\|^2 \le d \sum_{\|\beta\| = k+1} \|\partial_x^{\beta} \rho_n(t)\|^2,
    \]
    which can be seen because for each $\beta$, there are $m(\beta) \coloneq |\{i = 1,\dots,d | \beta_i \ge 1\}|$ possible multiindices $\alpha$ that generate the same vector by setting $\alpha = \beta - e_i$.
    Then, using the induction step,
    \[
    \sum_{\|\beta\| = k+1} \|\partial_x^{\beta} \rho_n(t)\|^2 \le \tilde{K} C(T) \|\rho_0\|_{H^k(\Omega)} + d(\|\rho_0\|_{H^{k+1}(\Omega)} - \|\rho_0\|_{H^k(\Omega)}),
    \]
    which leads to
    \[
    \sup_{0 \le t \le T} \|\rho_n\|_{H^{k+1}(\Omega)} \le \tilde{C}(T)\|\rho_0\|_{H^{k+1}(\Omega)}
    \]
    for some constant $\tilde{C}(T)$.
    
    We also obtain that 
    \[
    \int_0^T B_n(s) \, ds \le \frac{\tilde{K} C(T)}{\sigma} \|\rho_0\|_{H^k(\Omega)} + \frac{d}{\sigma}(\|\rho_0\|_{H^{k+1}(\Omega)} - \|\rho_0\|_{H^k(\Omega)}).
    \]
    Notice that 
    \[
    B_n(s) = \sum_{|\alpha|=k} \|\Delta \partial^{\alpha}_x \rho_n(t)\|^2 = \sum_{|\alpha|=k} \sum_{i,j=1}^d \|\partial^{\alpha + e_i + e_j}_x \rho_n(t)\|^2  \ge \sum_{\|\beta\| = k+2} \|\partial_x^{\beta} \rho_n(t)\|^2,
    \]
    where the second equality is due to the periodic boundary conditions.
    Therefore,
    \[
    \|\rho_n\|_{L^2(0,T; H^{k+2}(\Omega))} \le \tilde{\tilde{C}}(T)\|\rho_0\|_{H^{k+1}(\Omega)}.
    \]
    This completes the induction. 
    Taking limits, we obtain
    \begin{equation*}
        \rho \in L^{2}(0,T;H^{k+2}(\Omega)) \cap L^{\infty}(0,T;H^{k+1}(\Omega)),
    \end{equation*}
    with the correct estimate.
\end{proof}

We next improve the regularity in the time domain.

\begin{theorem}
    \label{sup:thm:reg_xt} 
    Let $k \in \mathbb{N}$ and assume that $V,W \in W^{2k + 1, \infty}(\Omega)$ and $\rho_{0} \in H^{2k}(\Omega) \cap \Pac$.
    Then,
    \begin{enumerate}
        \item[(i)] For every $0 \le m\le k$, the unique solution to \eqref{sup:eq:mv_equation} satisfies 
        \[
        \frac{d^{m}\rho}{dt^{m}} \in L^2(0,T; H^{2k-2m+1}(\Omega)) \cap L^{\infty}(0, T; H^{2k-2m}(\Omega)),
        \]
        with the estimate
        \[
        \sum_{m=0}^{k}\left(\left\|\frac{d^{m}\rho}{dt^{m}}\right\|_{L^{2}(0,T;H^{2k-2m+1}(\Omega))}+\left\|\frac{d^{m}\rho}{dt^{m}}\right\|_{L^{\infty}(0,T;H^{2k-2m}(\Omega))}\right) \le D\left(\rho_{0};k,T\right),
        \]
        where 
        \[
        D\left(\rho_{0};k,T\right) \coloneq \left(\sum_{j=0}^{k}C\left(\rho_{0};2k,T\right)^{2^{j+1}}\right)^{1/2}.
        \]
    
        \item[(ii)]
        $\displaystyle
        \frac{d^{k+1}\rho}{dt^{k+1}}\in L^{2}(0,T;H^{-1}(\Omega))$ such that $\displaystyle \left\|\frac{d^{k+1}\rho}{dt^{k+1}}\right\|_{L^{2}(0,T;H^{-1}(\Omega))} \le D\left(\rho_{0};k,T\right).
        $
    \end{enumerate}
\end{theorem}

\begin{proof}
    Let us prove that for all $M \le k$
    \begin{align}
        \label{sup:eq:inductive_hyp-1}
        \sum_{m=0}^{M} \Bigl( \left\|\frac{d^{m}\rho_{n}}{dt^{m}} \right\|_{L^{2}(0,T;H^{2k-2m+1}(\Omega))} &+ \left\|\frac{d^{m}\rho_{n}}{dt^{m}}\right\|_{L^{\infty}(0,T;H^{2k-2m}(\Omega))}\Bigr) \\ 
        &\le C\left(\rho_{0};k,M,T\right) \coloneq \left(\sum_{j=0}^{M}C\left(\rho_{0};2k,T\right)^{2^{j+1}}\right)^{1/2} \nonumber
    \end{align}
    by induction on $M$ up to $k$.
    The case $M=0$ is \Cref{sup:thm:x_regular}. 
    Suppose the estimate \eqref{sup:eq:inductive_hyp-1} holds for some $0 \le M < k$. 
    Differentiating~\eqref{sup:eq:sequence_evolution} $M$ times with respect to $t$ and using the Leibniz rule, we have
    \begin{align*}
        \rho_{n}^{\left(M+1\right)} &= \sigma \Delta \rho_{n}^{\left(M\right)} + \nabla \cdot \left(\rho_{n}\nabla v[\rho_{n-1}]\right)^{\left(M\right)} \\ 
        &= \sigma \Delta \rho_{n}^{\left(M\right)} + \sum_{m=0}^{M} \genfrac(){0pt}{0}{M}{m} \nabla \cdot \left(\rho_{n}^{\left(m\right)} \nabla v\Bigr[\rho_{n-1}^{\left(M-m\right)}\Bigr]\right).
    \end{align*}
    where we used the shorthand $\rho_{n}^{\left(m\right)}:=\partial^{m}\rho_{n}/\partial t^{m}$.
    Thus, we have
    \begin{align*}
        \|\rho_{n}^{\left(M+1\right)}(t)\|_{H^{2k-2M-1}(\Omega)}^{2} \le C_{1}\|&\rho_{n}^{\left(M\right)}(t)\|_{H^{2k-2M+1}(\Omega)}^{2} \\
        &+ C_2\sum_{m=0}^{M}\|\rho_{n}^{\left(m\right)}(t) \nabla v\Bigl[\rho_{n-1}^{\left(M-m\right)}\Bigr](t)\|_{H^{2k-2M}(\Omega)}^{2}
    \end{align*}
    for some constants $C_1, C_2 > 0$.
    
    Using \Cref{sup:prop:induction_estimate} with $\varphi = \rho_{n}^{\left(m\right)}(t)$
    and $\rho = \rho_{n-1}^{\left(M-m\right)}(t)$, we have
    \begin{align*}
        \|\rho_{n}^{\left(M+1\right)}(t)\|_{H^{2k-2M-1}(\Omega)}^{2} &\le C_{1}\|\rho_{n}^{\left(M\right)}(t)\|_{H^{2k-2M+1}(\Omega)}^{2} \\
        &\quad\quad + C_2\sum_{m=0}^{M}\|\rho_{n}^{\left(m\right)}(t)\nabla v\Bigl[\rho_{n-1}^{\left(M-m\right)}\Bigr](t)\|_{H^{2k-2M}(\Omega)}^{2} \\
        & \le C_{1}\|\rho_{n}^{\left(M\right)}(t)\|_{H^{2k-2M+1}(\Omega)}^{2} + \tilde{C}_2\sum_{m=0}^{M}\|\rho_{n}^{\left(m\right)}(t)\|_{H^{2k-2M}(\Omega)}^{2} \\
        & \le C_{1}\|\rho_{n}^{\left(M\right)}(t)\|_{H^{2k-2M+1}(\Omega)}^{2} +\tilde{C}_2\sum_{m=0}^{M}\|\rho_{n}^{\left(m\right)}(t)\|_{H^{2k-2m+1}(\Omega)}^{2},
    \end{align*}
    where the constant $\tilde{C}_2$ incorporates \Cref{sup:prop:induction_estimate} constant for all the different values of $\alpha$.
    Integrating over time then gives
    \begin{align*}
        \|\rho_{n}^{\left(M+1\right)}(t)\|_{L^2(0, T; H^{2k-2M-1}(\Omega))}^{2} &\le C_{1}\|\rho_{n}^{\left(M\right)}(t)\|_{L^2(0, T; H^{2k-2M+1}(\Omega))}^{2} \\
        &\quad\quad + \tilde{C}_2 \sum_{m=0}^{M}\|\rho_{n}^{\left(m\right)}(t)\|_{L^2(0, T; H^{2k-2m+1}(\Omega))}^{2}.
    \end{align*}
    Since $0\le m, M-m\le M$, we can apply the inductive hypothesis
    \eqref{sup:eq:inductive_hyp-1} to conclude that 
    \begin{align*}
        \|\rho_{n}^{\left(M+1\right)}\|_{L^{2}(0,T;H^{2k-2M-1}(\Omega))}^{2} & \le C_1 C\left(\rho_{0};k,M,T\right)^{2} + \tilde{C}_2 C\left(\rho_{0};k,M,T\right)^2 \\
        & \le C\left(\rho_{0};k,M+1,T\right),
    \end{align*}
    with the appropriate constant.
    A similar inequality for $\|\rho_{n}^{\left(M+1\right)}\|_{L^{\infty}(0,T;H^{2k-2M-2}(\Omega))}^{2}$ can be obtained, which completes the induction on $M$ up to $k$. 
    Putting $M=k$ into \eqref{sup:eq:inductive_hyp-1} and taking limits proves part (i).
    
    To prove the second part, notice that 
    \begin{equation*}
        \rho_{n}^{\left(k+1\right)}=\nabla \cdot \left(\sigma \nabla {\rho_{n}}^{\left(k\right)} + \sum_{m=0}^{k} \genfrac(){0pt}{0}{k}{m} \left(\rho_{n}^{(m)} \nabla v\Bigl[\rho_{n-1}^{(k-m)}\Bigr]\right)\right).
    \end{equation*}
    Hence,
    \begin{align*}
        \|\rho_{n}^{\left(k+1\right)}\|_{L^{2}(0,T;H^{-1}(\Omega))}^{2} & \le \left\|\sigma \nabla \rho_{n}^{\left(k\right)} + \sum_{m=0}^{k} \genfrac(){0pt}{0}{k}{m}\left(\rho_{n}^{\left(m\right)} \nabla v\Bigl[\rho_{n-1}^{\left(k-m\right)}\Bigr] \right) \right\|_{L^{2}(0,T;L^{2}(\Omega))}^{2} \\
        & \le C_{1}\|\rho_{n}^{\left(k\right)}\|_{L^{2}(0,T;H^{1}(\Omega))}^{2}  + C_2\sum_{m=0}^{k}\|\rho_{n}^{\left(m\right)}\|_{L^{2}(0,T;L^{2}(\Omega))}^{2} \\
        & \le D\left(\rho_{0};k,T\right).
    \end{align*}
    Taking limits then proves part (ii).
\end{proof}

\begin{corollary}\label{sup:cor:last_reg}
    Let $T>0$ and fix $k = \ceil*{2 + d/4}$.
    Assume $V, W \in W^{2k + 1, \infty}(\Omega)$ and $\rho_0 \in H^{2k}(\Omega) \cap \Pac$.
    Then the unique solution to~\eqref{sup:eq:mv_equation} satisfies
    \[
    \rho\in C^{1}\left(0,T;C^{2}(\Omega)\right).
    \]
\end{corollary}

\begin{proof}
    Applying \Cref{sup:thm:x_regular}, we get that 
    \[
    \rho \in L^{\infty}(0, T; H^{2k}(\Omega)) \cap L^2(0, T; H^{2k+1}(\Omega))
    \]
    and applying \Cref{sup:thm:reg_xt}, we get that 
    \[
    \partial_t \rho \in L^{\infty}(0,T; H^{2k-2}(\Omega)) \cap L^2(0, T; H^{2k-1}(\Omega)), \quad \partial_{tt} \rho \in L^2(0, T; H^{2k-3}(\Omega)).
    \]
    Then, \Cref{sup:thm:Evans} implies $\partial_t \rho \in C(0, T; H^{2k-2}(\Omega))$ and $\rho \in C(0, T; H^{2k}(\Omega))$.
    Because $2k - 2 > 2 + d/2$, Sobolev embedding yields 
    \[
    H^{2k-2}(\Omega) \hookrightarrow C^2(\Omega), \quad H^{2k}(\Omega) \hookrightarrow C^2(\Omega)
    \]
    hence $\rho \in C^1(0, T; C^2(\Omega))$.
\end{proof}

We conclude with the following theorem, excluding the regularity at $t = 0$.

\begin{theorem}
    Let $T > 0$ and fix $r = 4 + \floor*{d/2}$.
    Assume $V, W \in W^{r, \infty}(\Omega)$ and $\rho_0 \in \Pac$.
    Then the weak solution $\rho$ of~\eqref{sup:eq:mv_equation} is classical for every $\tau > 0$,
    \[
    \rho \in C(\tau, T; C^2(\Omega)) \cap C^1(\tau, T; C(\Omega)).
    \]
\end{theorem}

\begin{proof}
    The function $\rho$ satisfies the Duhamel representation, i.e.,
    \begin{equation}
        \label{sup:eq:duhamel}
        \rho(t) = S(t-s)\rho(s) + \int_s^t S(t-\theta) \nabla \cdot \big(\rho(\theta) \nabla v[\rho(\theta)]\big)\,d\theta
    \end{equation}
    for $0 \le s \le t \le T$, where $S(t) \coloneqq e^{\sigma t \Delta}$ is the heat semigroup on $\Omega$ with periodic boundary conditions.
    Well-known energy estimates~\cite[Sections 2.5, 2.6, 4.3, and 7.2]{pazy2012semigroups} show that 
    \begin{equation}
        \label{sup:eq:semigroup}
        \|S(t) f\|_{H^m(\Omega)} \le \|f\|_{H^m(\Omega)} \qquad
        \|S(t)\nabla\cdot F\|_{H^m(\Omega)} \le C t^{-1/2}\|F\|_{H^m(\Omega)}
    \end{equation}
    for all integer $m \ge 0$ and $t > 0$, and some constant $C > 0$ that does not depend on $F$.
    We also have the smoothing property that $\|S(t) f\|_{H^{m}(\Omega)} \le C t^{-m/2-d/4} \|f\|_{1}$ by using Young's inequality for convolution on the fundamental solution of the heat equation. 
    
    Fix $\tau > 0$ and an integer $m \in [0,r-1]$. 
    Using \eqref{sup:eq:duhamel} with $s \in [0,t)$ and \eqref{sup:eq:semigroup},
    \begin{align}
        \label{sup:eq:volterra}
        \|\rho(t)\|_{H^m(\Omega)} &\le \|\rho(s)\|_{H^m(\Omega)} + C \int_s^t (t-\theta)^{-1/2} \|\rho(\theta) \nabla v[\rho(\theta)]\|_{H^m(\Omega)} \, d\theta \\
        &\le \|\rho(s)\|_{H^m(\Omega)} + C_m \int_s^t (t-\theta)^{-1/2} \|\rho(\theta)\|_{H^m(\Omega)} \, d\theta \nonumber,
    \end{align}
    where $C_m$ incorporates the constant from \Cref{sup:prop:induction_estimate}.
    
    For an interval $I$, define $M_m(I) \coloneq \sup_{t \in I} \|\rho(t)\|_{H^m(\Omega)}$.
    Take $0 \le h \le \tau/2$ and set $s = t - h$ for the inequality~\eqref{sup:eq:volterra}. 
    Then, 
    \[
    M_m([\tau, \tau + h]) \le M_m([\tau-h, \tau]) +C_m M_m([\tau-h, \tau+h]) \int_{t-h}^t (t-\theta)^{-1/2} \, d\theta.
    \]
    Since $\int_{t-h}^t (t-\theta)^{-1/2}\, d\theta = 2\sqrt{h}$ and by taking $h > 0$ small such that $2C_m \sqrt{h} \le 1/2$,
    \[
    M_m([\tau,\tau+h]) \le M_m([\tau-h,\tau]) + \frac{1}{2} \max\{M_m([\tau-h, \tau]), M_m([\tau, \tau+h])\}
    \]
    which implies 
    \[
    M_m([\tau,\tau+h]) \le 2 M_m([\tau-h,\tau]).
    \]
    Given that $h > 0$, we can iterate finitely many times to obtain, for some $C > 0$,
    \begin{equation}\label{eq:boundedMm}
        \sup_{t\in[\tau,T]} \|\rho(t)\|_{H^m(\Omega)} \le C \sup_{t \in [\tau/2,\tau]} \|\rho(t)\|_{H^m(\Omega)}.
    \end{equation}
    
    To bound $\|\rho(t)\|_{H^m(\Omega)}$ over $[\tau/2, \tau]$, we return to~\eqref{sup:eq:duhamel} and obtain 
    \begin{align*}
        \|\rho(t)\|_{H^m(\Omega)} &\le C_1 t^{-m/2-d/4}\|\rho_0\|_1 + C_2\int_0^t (t-s)^{-1/2}\|\rho(s)\|_{H^m(\Omega)} \, ds \\
        &\le C_1 (\tau/2)^{-m/2-d/4}+ C_2\int_0^t (t-s)^{-1/2}\|\rho(s)\|_{H^m(\Omega)} \, ds
    \end{align*}
    for some constants $C_1 > 0$ and $C_2 > 0$.
    The fractional Grönwall inequality says that
    \[
    \|\rho(t)\|_{H^m(\Omega)} \le  C_1 (\tau/2)^{-m/2-d/4} E_{1/2}[C_2 \Gamma(1/2)(t^{1/2})], 
    \]
    where $E_{\beta}$ is the Mittag-Leffler function.
    In particular, $\sup_{t\in[\tau/2,\tau]}\|\rho(t)\|_{H^m}\le C_{\tau,m}$ for some constant $C_{\tau,m}$.
    Hence
    \begin{equation}\label{eq:uniformHm}
        \sup_{t\in[\tau,T]}\|\rho(t)\|_{H^m(\Omega)} \le C_{m,\tau,T} .
    \end{equation}
    Because $m\le r-1$ was arbitrary, \eqref{eq:uniformHm} holds for all $m=0,1,\dots,r-1$.

    Take $m_*=r-1$. 
    By our choice of $r$, we have $m_* > 2 + d/2$, so the Sobolev embedding theorem gives $H^{m_*}(\Omega) \hookrightarrow C^2(\Omega)$.
    Therefore, $\rho\in L^\infty(\tau, T; C^2(\Omega))$.
    Regarding the time derivative, write it again as $\partial_t \rho = \nabla \cdot (\sigma \nabla \rho + \rho \nabla v[\rho])$.
    Because $\rho \in L^{\infty}(\tau, T; H^{m_*}(\Omega))\hookrightarrow L^{\infty}(\tau, T; C^2(\Omega))$, we have $\partial_t \rho \in L^{\infty}(\tau, T; H^{m_*-2}(\Omega)) \hookrightarrow L^{\infty}(\tau, T; C(\Omega))$.
    Continuity in time follows from~\eqref{sup:eq:duhamel}. 
    So
    \[
    \rho \in C(\tau, T; C^2(\Omega)), \quad \partial_t \rho \in C(\tau, T; C(\Omega)).
    \]
    
    This proves the theorem.
\end{proof}

\bibliographystyle{siamplain}
\bibliography{references}
\end{document}